\titleformat{\section}{\Large\bfseries}{\thesection.}{4pt}{}
\titleformat{\subsection}{\large\bfseries}{\thesection.\arabic{subsection}.}{4pt}{}
\titleformat{\subsubsection}{\bfseries}{\thesection.\arabic{subsection}.\arabic{subsubsection}.}{4pt}{}
\titleformat*{\paragraph}{\bfseries}
\titleformat*{\subparagraph}{\bfseries}
\newtheorem{theorem}{Theorem}[section]
\newtheorem{corollary}[theorem]{Corollary}
\newtheorem{lemma}[theorem]{Lemma}
\newtheorem{proposition}[theorem]{Proposition}
\theoremstyle{definition}
\newtheorem{remark}[theorem]{Remark}
\newcommand{\RN}{\mathbb{R}^N}
\newcommand{\R}{\mathbb{R}}
\newcommand{\N}{\mathbb{N}}
\newcommand{\Lc}{\mathscr{L}}
\numberwithin{equation}{section}
\title[Refined regularity of the blow-up set] 
      {Refined regularity of the blow-up set linked to refined asymptotic behavior for the semilinear heat equation}
\author[T. Ghoul, V. T. Nguyen, H. Zaag]{}
\subjclass{Primary: 35K50, 35B40; Secondary: 35K55, 35K57.}
 \keywords{Blowup solution, Blowup set, Blowup profile, Regularity, Semilinear heat equation}
 \email[T. Ghoul]{teg6@nyu.edu}
 \email[V. T. Nguyen]{Tien.Nguyen@nyu.edu}
 \email[H. Zaag]{Hatem.Zaag@univ-paris13.fr}
\thanks{H. Zaag is supported by the ERC Advanced Grant no. 291214, BLOWDISOL and by the ANR project ANA\'E ref. ANR-13-BS01-0010-03. \\ -----------------\\ \today}
\begin{document}
\maketitle

\centerline{\scshape Tej-Eddine Ghoul$^\dagger$, Van Tien Nguyen$^\dagger$ and Hatem Zaag$^\ast$}
\medskip
{\footnotesize
 \centerline{$^\dagger$New York University in Abu Dhabi, P.O. Box 129188, Abu Dhabi, United Arab Emirates.}
  \centerline{$^\ast$Universit\'e Paris 13, Sorbonne Paris Cit\'e, LAGA, CNRS (UMR 7539), F-93430, Villetaneuse, France.}
}

\bigskip

\begin{abstract} We consider $u(x,t)$, a solution of $\partial_tu = \Delta u + |u|^{p-1}u$ which blows up at some time $T > 0$, where $u:\RN \times[0,T) \to \R$, $p > 1$ and $(N-2)p < N+2$. Define $S \subset \RN$ to be the blow-up set of $u$, that is the set of all blow-up points. Under suitable nondegeneracy conditions, we show that if $S$ contains a $(N-\ell)$-dimensional continuum for some $\ell \in \{1,\dots, N-1\}$, then $S$ is in fact a $\mathcal{C}^2$ manifold. The crucial step is to derive a refined asymptotic behavior of $u$ near blow-up. In order to obtain such a refined behavior, we have to abandon the  explicit profile function as a first order approximation and take a non-explicit function as a first order description of the singular behavior. This way we escape logarithmic scales of the variable $(T-t)$ and reach significant small terms in the polynomial order $(T-t)^\mu$ for some $\mu > 0$. The refined asymptotic behavior yields geometric constraints of the blow-up set, leading to more regularity on $S$.
\end{abstract}

\maketitle
\section{Introduction.}
We are interested in the following semilinear heat equation: 
\begin{equation}\label{equ:problem}
\left\{
\begin{array}{rcl}
\partial_t u &=& \Delta u + |u|^{p-1}u, \\
u(0) &=& u_0 \in L^\infty(\mathbb{R}^N),
\end{array}
\right.
\end{equation}
where $u(t): x \in \mathbb{R}^N \to u(x,t) \in \mathbb{R}$, $\Delta$ denotes the Laplacian in $\mathbb{R}^N$, and $p > 1$ or $1 < p < \frac{N + 2}{N - 2}$ if $N \geq 3$. 

It is well known that for each initial data $u_0$ the Cauchy problem \eqref{equ:problem} has a unique solution $u \in \mathcal{C}([0,T), L^\infty(\RN))$ for some $0 < T \leq +\infty$, and that either $T = +\infty$ or 
$$T < +\infty \quad \text{and} \quad \lim_{t \to T}\|u(t)\|_{L^\infty} = +\infty.$$
In the latter case we say that the solution blows up in finite time, and $T$ is called the blow-up time. In such a blow-up case, a point $\hat{a} \in \RN$ is called a blow-up point if $u(x,t)$ is not locally bounded in some neighborhood of $(\hat{a},T)$, this means that there exists $(x_n, t_n) \to (\hat{a}, T)$ such that $|u(x_n,t_n)| \to +\infty$ when $n \to +\infty$. We denote by $S$ the blow-up set, that is the set of all blow-up points of $u$.

Given $\hat{a} \in S$, we know from Vel\'azquez \cite{VELcpde92} (see also Filippas and Kohn \cite{FKcpam92}, Filippas and Liu \cite{FLaihn93}, Herrero and Vel\'azquez \cite{HVaihn93}, Merle and Zaag \cite{MZma00}) that up to replacing $u$ by $-u$, one of 
following two cases occurs:

- Case 1 (non degenerate rate of blow-up): For all $K_0 > 0$, there is an orthonormal $(N\times N)$ matrix $Q_{\hat{a}}$ and $\ell_{\hat{a}} \in \{1, \dots, N\}$ such that 
\begin{equation}\label{equ:case1}
\sup_{|\xi| \leq K_0} \left|(T-t)^\frac{1}{p-1}u(\hat{a} + Q_{\hat{a}}\xi\sqrt{(T-t)|\log (T-t)|}, t) - f_{\ell_{\hat{a}}}(\xi)\right| \to 0 \;\; \text{as}\;\; t \to T,
\end{equation}
where 
\begin{equation}\label{def:fl}
f_{\ell_{\hat{a}}}(\xi) = \left(p-1 + \frac{(p-1)^2}{4p} \sum_{i = 1}^{\ell_{\hat{a}}}\xi_i^2 \right)^{-\frac{1}{p-1}}.
\end{equation}

- Case 2 (degenerate rate of blow-up): For all $K_0 \geq 0$, there exists an even integer $m \geq 4$ such that 
\begin{equation}\label{equ:case2}
\sup_{|\xi| \leq K_0}\left|(T-t)^\frac{1}{p-1}u(\hat{a} + \xi(T-t)^\frac{1}{m},t) - \left(p-1 + \sum_{|\alpha| = m} c_\alpha\xi^\alpha\right)^{-\frac{1}{p-1}} \right| \to 0  \;\; \text{as}\;\; t \to T,
\end{equation}
where $\xi^\alpha = \prod \limits_{i=1}^N \xi_i^{\alpha_i}$, $|\alpha| = \sum\limits_{i=1}^N \alpha_i$ if $\alpha = (\alpha_1, \dots, \alpha_n) \in \mathbb{N}^N$ and $\sum \limits_{|\alpha| = m}c_\alpha\xi^\alpha \geq 0$ for all $\xi \in \RN$.\\

According to Vel\'azquez \cite{VELcpde92}, if case 1 occurs with $\ell_{\hat{a}} = N$ or case 2 occurs with $\sum_{|\alpha| = m}c_\alpha\xi^\alpha > 0$ for all $\xi \ne 0$, then $\hat{a}$ is an isolated blow-up point. Herrero and Vel\'azquez \cite{HVasnsp92} and \cite{HVcrasp92} proved that the profile \eqref{def:fl} with $\ell_{\hat{a}} = N$ is generic in the case $N = 1$, and they announced the same for $N \geq 2$, but they never published it. Bricmont and Kupiainen \cite{BKnon94}, Merle and Zaag \cite{MZdm97} show the existence of initial data for \eqref{equ:problem} such that the corresponding solutions blow up in finite time $T$ at only one blow-up point $\hat{a}$ and verify the behavior \eqref{equ:case1} with $\ell_{\hat{a}} = N$. The method of \cite{MZdm97} also gives the stability of the profile \eqref{def:fl} ($\ell_{\hat{a}} = N$) with respect to perturbations in the initial data (see also Fermanian, Merle and Zaag \cite{FMZma00}, \cite{FZnon00} for other proofs of the stability). In \cite{EZsema11} and \cite{NZsns16}, the authors prove the stability of the profile \eqref{def:fl} ($\ell_{\hat{a}} = N$) with respect to perturbations in the initial data and also in the nonlinearity, in some class allowing lower order terms in the solution and also in the gradient. All the other asymptotic behaviors are suspected to be unstable. 

When $$\ell_{\hat{a}} \leq N - 1$$
in \eqref{equ:case1}, we do not know whether $\hat{a}$ is isolated or not, or whether $S$ is continuous near $\hat{a}$. In this paper, we assume that $\hat{a}$ is a non-isolated blow-up point and that $S$ is continuous locally near $\hat{a}$, in a sense that we will precisely define later. Our main concern is the regularity of $S$ near $\hat{a}$. The first relevant result is due to Vel\'azquez \cite{VELiumj93} where the author showed that the Hausdorff measure of $S$ is less or equal to $N - 1$. No further results on the description of $S$ were known until the contributions of Zaag \cite{ZAAaihp02}, \cite{ZAAcmp02} and \cite{ZAAdm06} (see also \cite{ZAAmme02} for a summarized  note). In \cite{ZAAaihp02}, the author proves that if $S$ is locally continuous, then $S$ is a $\mathcal{C}^1$ manifold. He also obtains the first singularity description near $\hat{a}$. More precisely, he shows that (see Theorems 3 and 4 in \cite{ZAAaihp02}) for some $t_0 < T$ and $\delta > 0$, for all $K_0 > 0$, $t \in [t_0, T)$ and $x \in B(\hat{a},2\delta)$ such that $d(x,S) \leq K_0\sqrt{(T-t)|\log(T-t)|}$, 
\begin{equation}\label{equ:asynerabar}
\left|(T-t)^\frac{1}{p-1}u(x,t) - f_1\left(\frac{d(x,S)}{\sqrt{(T-t)|\log (T-t)|}}\right) \right|\leq C(K_0)\frac{\log |\log (T-t)|}{|\log(T-t)|},
\end{equation}
where $f_1$ is defined in \eqref{def:fl} ($\ell_{\hat{a}} = 1$). Moreover, for all $x \in \RN \setminus S$, $u(x,t) \to u^*(x)$ as $t \to T$ with 
\begin{equation}\label{def:ustar}
u^*(x) \sim U(d(x,S)) = \left(\frac{8p}{(p-1)^2} \frac{|\log d(x,S)|}{d^2(x,S)}\right)^\frac{1}{p-1} \quad \text{as} \;\; d(x,S) \to 0 \;\; \text{and} \;\; x \in B(\hat{a},2\delta).
\end{equation}
If 
$$\ell_{\hat{a}} = 1,$$
Zaag in \cite{ZAAcmp02} further refines the asymptotic behavior \eqref{equ:asynerabar} and gets to error terms of order $(T - t)^\mu$ for some $\mu > 0$. This way, he obtains more regularity on the blow-up set $S$. The key idea is to replace the explicit profile $f_1$ in \eqref{equ:asynerabar} by a non-explicit function, say $\tilde{u}(x_1,t)$, then to go beyond all logarithmic scales through scaling and matching. In fact, for $\tilde{u}(x_1,t)$, Zaag takes a symmetric, one dimensional solution of \eqref{equ:problem} that blows up at the same time $T$ only at the origin, and behaves like \eqref{equ:case1} with $\ell_{\hat{a}} = 1$. More precisely, he abandons the explicit profile function $f_1$ in \eqref{equ:asynerabar} and chooses a non explicit function $\tilde{u}_{\sigma}(d(x,S),t)$ as a first order description of the singular behavior, where $\tilde{u}_\sigma$ is defined by 
\begin{equation}\label{def:utilde}
\tilde{u}_\sigma(x_1,t) = e^{-\frac{\sigma}{p-1}}\tilde{u}\left(e^{-\frac{\sigma}{2}}x_1, T - e^{-\sigma}(T-t)\right).
\end{equation}
He shows that for each blow-up point $a$ near $\hat{a}$, there is an optimal scaling parameter $\sigma = \sigma(a)$ so that the difference $(T-t)^\frac{1}{p-1}\left(u(x,t) - \tilde{u}_{\sigma(a)}(d(x,S),t)\right)$ along the normal direction to $S$ at $a$ is minimum. Hence, if the function $\tilde{u}_{\sigma(a)}(d(x,S),t)$ is chosen as a first order description for $u(x,t)$ near $(a,T)$, we escape logarithmic scales. More precisely, for all $t \in [t_0, T)$ and $x \in B(\hat{a},2\delta)$ such that $d(x,S) \leq K_0\sqrt{(T-t)|\log(T-t)|}$,
\begin{equation}\label{est:Zaacmp02}
(T-t)^\frac{1}{p-1}\left|u(x,t) - \tilde{u}_{\sigma(a)}(d(x,S),t)\right| \leq C(T-t)^\mu,
\end{equation}
for some $\mu > 0$. Note that any other value of $\sigma \ne \sigma(a)$ in \eqref{est:Zaacmp02} gives an error of logarithmic order of the variable $(T-t)$ (the same as in \eqref{equ:asynerabar}). Exploiting estimate \eqref{est:Zaacmp02} yields geometric constraints on $S$ which imply the $\mathcal{C}^{1, \frac{1}{2} - \eta}$-regularity of $S$ for all $\eta > 0$. A further refinement of \eqref{est:Zaacmp02} given in \cite{ZAAdm06} yields better estimates in the expansion of $u(x,t)$ near $(a,T)$. Moreover, some following terms in the expansion of $u(x,t)$ near $(a,T)$ contain geometrical descriptions of $S$, resulting in more regularity of $S$, namely the $\mathcal{C}^2$-regularity. \\

In this work, we want to know whether the $\mathcal{C}^2$-regularity near $\hat{a}$ proven in \cite{ZAAdm06} for $\ell_{\hat{a}} = 1$ would hold in the case where $u$ behaves like \eqref{equ:case1} near $(\hat{a},T)$ with 
\begin{equation}\label{equ:ellain2N1}
\ell_{\hat{a}} \in \{2, \dots, N-1\}.
\end{equation}
Since the author in \cite{ZAAaihp02} and \cite{ZAAdm06} obtains the result only when $\ell_{\hat{a}} = 1$, this corresponds to $(N-1)$-dimensional blow-up set (the codimension of the blow-up set is one, according to \cite{ZAAaihp02}). In our opinion, in those papers the major obstacle towards the case \eqref{equ:ellain2N1} lays in the fact that the author could not refine the asymptotic behavior \eqref{equ:case1} with $\ell_{\hat{a}} \in \{2, \dots, N-1\}$ to go beyond all logarithmic scales and get a smaller error term in polynomial orders of the variable $(T-t)$. It happens that a similar difficulty was already encountered by Fermanian and Zaag in \cite{FZnon00}, when they wanted to find a sharp
profile in the case \eqref{equ:case1} with $\ell_{\hat{a}} = N$, which corresponds to an isolated blow-up point, as we have pointed  out right after estimate \eqref{equ:case2}. Such a sharp profile could be obtained in \cite{FZnon00} only when $N=1$ (which corresponds also to $\ell_{\hat{a}}=1$): no surprise it was $\tilde u_\sigma(x_1,t)$, the dilated version of $\tilde u(x_1,t)$, the one-dimensional blow-up solution mentioned between estimates \eqref{def:ustar} and \eqref{def:utilde}. As a matter of fact, the use of $\tilde u(x_1,t)$ was first used in \cite{FZnon00} for the isolated blow-up point in one space dimension ($N=1$ and $\ell_{\hat{a}}=1$), then later in higher dimensions with a $(N-1)$ dimensional blow-up surface ($N\ge 2$ and still $\ell_{\hat{a}}=1$) in \cite{ZAAcmp02}.

The interest of $\tilde u(x_1,t)$ is that it provides a one-parameter family of blow-up solutions, thanks to the scaling parameter in \eqref{def:utilde}, which enables to get the sharp profile by suitably choosing the parameter.

Handling the case $\ell_{\hat a}\ge 2$ remained open, both for the case of an isolated point ($\ell_{\hat a}=N\ge 2$) and a non-isolated blow-up point ($\ell_{\hat a}=2,\cdots,N-1$). From the refinement of the expansion around the explicit profile in $f_{\ell_{\hat a}}$ in \eqref{equ:case1}, it appeared than one needs a $\frac{\ell_{\hat a}(\ell_{\hat a}+1)}{2}$-parameter family of blow-up solutions obeying \eqref{equ:case1}.

Such a family was constructed by Nguyen and Zaag in \cite{NZens16}, and successfully used to derive a sharp profile in the case of an isolated blow-up point ($\ell_{\hat a}=N \ge 2$), by fine-tuning the $\frac{\ell_{\hat a}(\ell_{\hat a}+1)}{2}= \frac{N(N+1)}{2}$ parameters. 

In this paper, we aim at using that family to handle the case of a non-isolated blow-up point ($N\ge 2$ and $\ell_{\hat a} =2,\cdots,N-1$), in order to generalize the results of Zaag in \cite{ZAAaihp02}, \cite{ZAAcmp02} and \cite{ZAAdm06}, proving in particular the $C^2$ regularity of the blow-up set, under the mere hypothesis that it is continuous.

The main result in this paper is the following.
\begin{theorem}[$\mathcal{C}^2$ regularity of the blow-up set assuming $\mathcal{C}^1$ regularity]\label{theo:1} Take $N \geq 2$ and $\ell \in \{1, \cdots, N-1\}$. Consider $u$ a solution of \eqref{equ:problem} that blows up in finite time $T$ on a set $S$ and take $\hat{a} \in S$ where $u$ behaves locally as stated in \eqref{equ:case1} with $\ell_{\hat{a}} = \ell$. If $S$ is locally a $\mathcal{C}^1$ manifold of dimension $N -\ell$, then it is locally $\mathcal{C}^2$.
\end{theorem}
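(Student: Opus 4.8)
The plan is to follow the strategy that worked for Zaag in \cite{ZAAdm06} for the case $\ell_{\hat a}=1$, but now using the richer $\frac{\ell(\ell+1)}{2}$-parameter family of blow-up solutions constructed by Nguyen and Zaag in \cite{NZens16}, which is exactly the tool whose absence blocked the higher-$\ell$ case before. The heart of the argument is a \emph{refined asymptotic expansion} of $u(x,t)$ near each blow-up point $a$ close to $\hat a$, and the regularity of $S$ will fall out of the geometric constraints that this expansion imposes. So the first task is to set up the right first-order comparison object. Since $S$ is assumed to be a $\mathcal{C}^1$ manifold of dimension $N-\ell$ near $\hat a$, at each point $a\in S$ there is a well-defined $\ell$-dimensional normal space $N_aS$ and an orthonormal frame; I would use the signed distance/normal coordinates to parametrize a neighborhood of $S$, writing points as $x = a + \xi$ with $\xi \in N_aS$. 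The natural first-order description is no longer the explicit profile $f_\ell$ but a non-explicit $\ell$-dimensional blow-up solution $\tilde u(y,t)$ from the family of \cite{NZens16} (with $y\in\R^\ell$), suitably dilated as in \eqref{def:utilde} by a scaling parameter and, crucially, rotated by an element $Q_a$ of the orthogonal group $O(\ell)$ acting on the normal directions. The extra rotational and multi-parameter freedom (compared to the single scalar $\sigma$ in \cite{ZAAcmp02}) is precisely what the $\frac{\ell(\ell+1)}{2}$ parameters encode.

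Next I would carry out the \textbf{scaling-and-matching} step. The idea, as in \eqref{est:Zaacmp02}, is that for each $a$ near $\hat a$ there is an \emph{optimal} choice of the parameters $\theta(a)$ (scaling, rotation, and possibly translation/shift data) in the family $\tilde u_{\theta}$ so that the difference $(T-t)^{1/(p-1)}\bigl(u(a+\xi,t)-\tilde u_{\theta(a)}(\xi,t)\bigr)$, measured in the self-similar variable $\xi/\sqrt{(T-t)|\log(T-t)|}$ restricted to the normal directions, is minimized. For this optimal $\theta(a)$ the matching kills all the logarithmic-scale error terms, and one reaches an error of polynomial order $(T-t)^\mu$ for some $\mu>0$, exactly as in \eqref{est:Zaacmp02} but now in the $\ell$-dimensional normal slice. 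I would prove this by projecting the equation for the difference onto the eigenspaces of the linearized (Hermite/harmonic-oscillator) operator $\Lc$ in self-similar variables, isolate the resonant (zero and positive) modes that govern the slow decay, and show that the free parameters $\theta(a)$ can be tuned to cancel exactly those modes. The fact that \cite{NZens16} provides enough parameters to match every resonant mode is what makes this tuning possible; this is the step that was impossible with only the scalar dilation available in \cite{ZAAcmp02}.

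Once the refined expansion is in hand, the regularity of $S$ comes from \textbf{reading off geometric information} from the subleading terms, following the template of \cite{ZAAdm06}. Concretely, comparing the refined expansion centered at $a$ with the one centered at a nearby blow-up point $a'\in S$, and using that \emph{both} expansions must describe the same solution $u$ on their overlapping region, forces compatibility relations between the optimal parameters $\theta(a)$ and $\theta(a')$. In \cite{ZAAdm06} the analogous compatibility of the $\sigma(a)$ and the normal vectors across $S$ upgraded $\mathcal{C}^{1,1/2-\eta}$ to $\mathcal{C}^2$; here the compatibility will involve the full matrix-valued parameter (the rotation/Hessian data $Q_a$), and extracting it should control the variation of the normal space $N_aS$ as $a$ moves along $S$, i.e.\ control the second fundamental form, yielding the $\mathcal{C}^2$ regularity. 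I would quantify this by showing the map $a\mapsto \theta(a)$ is itself Lipschitz (equivalently the second-order Taylor data of the parametrization of $S$ is continuous), which is equivalent to the $\mathcal{C}^2$ conclusion.

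The step I expect to be the \textbf{main obstacle} is the optimal-parameter matching in the $\ell$-dimensional transverse problem. Unlike the $\ell=1$ case, where the kernel of $\Lc$ on the relevant mode is essentially one-dimensional and a single scalar $\sigma$ suffices, for $\ell\ge 2$ the resonant null space is genuinely $\frac{\ell(\ell+1)}{2}$-dimensional (it is spanned by the quadratic Hermite modes $\xi_i\xi_j-2\delta_{ij}$ on the normal slice), and one must verify that the parameter-to-mode map from the family of \cite{NZens16} is a \emph{local diffeomorphism} onto this null space --- i.e.\ that the parameters act transversally and independently. Establishing the nondegeneracy of this correspondence, and then controlling uniformly in $a$ the inversion that produces $\theta(a)$, is the delicate analytic core; the rest of the proof is a (lengthy but comparatively standard) adaptation of the energy/parabolic estimates of \cite{ZAAcmp02,ZAAdm06} to the higher-dimensional normal bundle.
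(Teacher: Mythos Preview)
Your overall strategy matches the paper's: use the $\frac{\ell(\ell+1)}{2}$-parameter family of \cite{NZens16} to cancel the resonant quadratic modes and reach polynomial error, then exploit geometric compatibility between expansions centered at nearby blow-up points to extract the second fundamental form. Two points deserve correction, however.

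First, the step you flag as the ``main obstacle'' --- showing that the parameter-to-mode map is a local diffeomorphism --- is not an obstacle at all. The construction in \cite{NZens16} is tailored so that for every symmetric $\ell\times\ell$ matrix $\mathcal{A}$ the solution $w_{\mathcal{A}}$ satisfies, in $L^2_\rho$,
\[
w_{\mathcal{A}}(\bar y,s)-\hat w(\bar y,s)=\frac{1}{s^2}\Bigl(\tfrac12\bar y^{T}\mathcal{A}\bar y-\mathrm{tr}\,\mathcal{A}\Bigr)+o\!\left(\frac{1}{s^2}\right).
\]
Combined with a dichotomy lemma (either $W_a-\hat w$ carries a nonzero quadratic mode of order $1/s^2$, or it decays like $e^{-s/2}s^{C_0}$), the matching is a one-line choice $\mathcal{A}=\mathcal{B}(a)$ read off from the classification; no inverse-function argument is needed. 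The parameters are the \emph{symmetric matrix} $\mathcal{B}(a)$, not a rotation in $O(\ell)$ --- the orthogonal matrix $Q_a$ is fixed by the tangent/normal frame of $S$ and is not a tunable parameter.

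Second, and more substantively, you are vague about how compatibility produces $\mathcal{C}^2$, and the paper's route requires more than Lipschitz continuity of $a\mapsto\mathcal{B}(a)$. After obtaining $\|W_a(Q_ay,s)-w_{\mathcal{B}(a)}(\bar y,s)\|_{L^2_\rho}\le Ce^{-s/2}s^{C_0}$, one first deduces $\mathcal{C}^{1,\frac12-\eta}$ regularity of $S$ (by comparing the profiles at two nearby blow-up points, as in \cite{ZAAcmp02}), then uses a geometric constraint identity relating $\partial_{y_k}w_a$ to $\partial_{y_k}w_b$ and the derivatives of the local chart $\gamma_a$ to upgrade to $\mathcal{C}^{1,1-\eta}$. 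Only with this improved regularity can one refine the expansion one step further, isolating a term of order $\frac{e^{-s/2}}{s}$ supported on cubic Hermite modes $h_\beta$ with $|\beta|=3$, $|\bar\beta|=1$; the geometric constraint is invoked again to kill the $|\bar\beta|=0$ contributions. The coefficients $\lambda_{e_i+e_j+e_k}(a)$ of the surviving modes are then identified (via one more application of the constraint) with $\frac{\kappa}{2p(1+\delta_{jk})}\,\partial^2_{\xi_j\xi_k}\gamma_{a,i}(0)$, and their continuity in $a$ is what yields $\mathcal{C}^2$. This iterative bootstrap --- sharp profile $\Rightarrow$ $\mathcal{C}^{1,\alpha}$ $\Rightarrow$ sharper profile $\Rightarrow$ $\mathcal{C}^2$ --- is the genuine technical core, and your sketch elides it.
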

\begin{remark} Theorem \ref{theo:1} was already proved by Zaag \cite{ZAAdm06} only when $\ell =1$. Thus, the novelty of our contribution lays in the case $\ell \in \{2, \dots, N-1\}$ and $N \geq 3$. 
\end{remark}
Under the hypotheses of Theorem \ref{theo:1}, Zaag \cite{ZAAaihp02} already proved that $S$ is a $\mathcal{C}^1$ manifold near $\hat{a}$, assuming that $S$ is continuous. Therefore, Theorem \ref{theo:1} can be restated under a weaker assumption. Before stating this stronger version, let us first clearly describe our hypotheses and introduce some terminologies borrowed from \cite{ZAAaihp02} (see also \cite{ZAAcmp02} and \cite{ZAAdm06}). According to Vel\'azquez \cite{VELcpde92} (see Theorem 2, page 1571), we know that for all $\epsilon >0$, there is $\delta(\epsilon) > 0$ such that 

$$S \cap B(\hat{a}, 2\delta) \subset \Omega_{\hat{a}, \epsilon} \equiv \left\{x \in \RN, \left|P_{\hat{a}}(x-\hat{a})\right| \geq (1 - \epsilon) |x - \hat{a}| \right\},
$$
where $P_{\hat{a}}$ is the orthogonal projection over $\pi_{\hat{a}}$, where 
$$\pi_{\hat{a}} = \hat{a} + \text{span}\{Q_{\hat{a}}^Te_{\ell_{\hat{a}} + 1}, \cdots, Q_{\hat{a}}^Te_N\}$$
is the so-called "weak" tangent plane to $S$ at $\hat{a}$. Roughly speaking, $\Omega_{\hat{a}, \epsilon}$ is a cone with vertex $\hat{a}$ and shrinks to $\pi_{\hat{a}}$ as $\epsilon \to 0$. In some "weak" sense, $S$ is $(N - \ell_{\hat{a}})$-dimensional. In fact, here comes our second hypothesis: we assume there is $\Gamma \in \mathcal{C}((-1,1)^{N-\ell_{\hat{a}}}, \RN)$ such that $\Gamma(0) = \hat{a}$ and $Im\, \Gamma \subset S$, where $Im\, \Gamma$ is at least $(N-\ell_{\hat{a}})$-dimensional, in the sense that 
\begin{equation}\label{hyp:coneprop}
\begin{array}{lll}
\quad & \text{$\forall b \in Im\,\Gamma$, there are $(N-\ell_{\hat{a}})$ independent vectors $v_1, \dots, v_{N-\ell_{\hat{a}}}$ in $\RN$ and}& \quad \\
\quad & \text{$\Gamma_1, \dots, \Gamma_{N-\ell_{\hat{a}}}$ functions in $\mathcal{C}^1([0,1],S)$ such that $\Gamma_i(0) = b$ and $\Gamma'_i(0) = v_i$.}&\quad
\end{array}
\end{equation}
The hypothesis \eqref{hyp:coneprop} means that $b$ is actually non-isolated in $(N-\ell_{\hat{a}})$ independent directions. We assume in addition that $\hat{a}$ is not an endpoint in $Im \,\Gamma$ in the sense that 
\begin{equation}\label{hyp:endpoint}
\begin{array}{lll}
\quad & \text{$\forall \epsilon > 0$, the projection of $\Gamma((-\epsilon, \epsilon)^{N-\ell_{\hat{a}}})$ on the "weak" tangent plane $\pi_{\hat{a}}$} &\quad \\
\quad & \text{at $\hat{a}$ contains an open ball centered at $\hat{a}$.}&\quad\\
\end{array}
\end{equation}

This is the stronger version of our result:

\noindent\textbf{Theorem \ref{theo:1}'.} \textit{Take $N \geq 2$ and $\ell \in \{1, \cdots, N-1\}$. Consider $u$ a solution of \eqref{equ:problem} that blows up in finite time $T$ on a set $S$ and take $\hat{a} \in S$ where $u$ behaves locally as stated in \eqref{equ:case1} with $\ell_{\hat{a}} = \ell$. Consider $\Gamma \in \mathcal{C}((-1,1)^{N - \ell}, \RN)$ such that $\hat{a} = \Gamma(0) \in Im\, \Gamma \subset S$ and $Im\, \Gamma$ is at least $(N - \ell)$-dimensional (in the sense \eqref{hyp:coneprop}). If $\hat{a}$ is not an endpoint (in the sense \eqref{hyp:endpoint}), then there are $\delta > 0$, $\delta_1 > 0$ and $\gamma \in \mathcal{C}^{2}((-\delta_1, \delta_1)^{N-\ell}, \mathbb{R}^{\ell})$ such that 
$$S_\delta = S \cap B(\hat{a},2\delta) = graph(\gamma) \cap B(\hat{a}, 2\delta) = Im\, \Gamma \cap B(\hat{a}, 2\delta),$$
and the blow-up set $S$ is a $\mathcal{C}^2$-hypersurface locally near $\hat{a}$.}\\

Let us now briefly give the main ideas of the proof of Theorem \ref{theo:1}. The proof is based on techniques developed by Zaag in \cite{ZAAcmp02} and \cite{ZAAdm06} for the case when the solution of equation \eqref{equ:problem} behaves like \eqref{equ:case1} with $\ell = 1$. As in \cite{ZAAcmp02} and \cite{ZAAdm06}, the proof relies on two arguments:
\begin{itemize}
\item[-] The derivation of a sharp blow-up profile of $u(x,t)$ near the singularity, in the sense that the difference between the solution $u(x,t)$ and this sharp profile goes beyond all logarithmic scales of the variables $(T-t)$. This is possible thanks to the recent result in \cite{NZens16}.
\item[-] The derivation of a refined asymptotic profile of $u(x,t)$ near the singularity linked to geometric constraints on the blow-up set. In fact, we derive an asymptotic profile for $u(x,t)$ in every ball $B(a, K_0\sqrt{T-t})$ for some $K_0 > 0$ and $a$ a blow-up point close to $\hat{a}$. Moreover, this profile is continuous in $a$ and the speed of convergence of $u$ to each one in the ball $B(a, K_0\sqrt{T-t})$ is uniform with respect to $a$. If $a$ and $b$ are in $S$ and $0 < |a - b| \leq K_0\sqrt{T-t}$, then the balls $B(a, K_0\sqrt{T-t})$ and $B(b,K_0\sqrt{T-t})$ intersect each other, leading to different profiles for $u(x,t)$ in the intersection. However, these profiles have to coincide, up to the error terms. This makes a geometric constraint which gives more regularity for the blow-up set near $\hat{a}$.
\end{itemize}
Let us explain the difficulty raised in \cite{ZAAcmp02} and \cite{ZAAdm06} for the case $\ell \geq 2$. Consider $a \in S \cap B(\hat{a},2\delta)$ for some $\delta > 0$ and introduce the following self-similar variables:
\begin{equation}\label{def:WaI}
W_a(y,s) = (T-t)^\frac{1}{p-1}u(x,t), \quad y = \frac{x - a}{\sqrt{T-t}}, \quad s = -\log(T-t).
\end{equation}
Then, we see from \eqref{equ:problem} that for all $(y,s) \in \RN \times [-\log T, +\infty)$,
\begin{equation}\label{equ:WaI}
\frac{\partial W_a}{\partial s} = \Delta W_a - \frac{1}{2}y\cdot \nabla W_a - \frac{W_a}{p-1}  + |W_a|^{p-1}W_a.
\end{equation}
Under the hypotheses stated in Theorem \ref{theo:1}, Zaag \cite{ZAAaihp02} proved in Proposition 3.1, page 513 and in Section 6.1, pages 530-533 that for all $a \in S_\delta \equiv S \cap B(\hat a, 2\delta)$ for some $\delta >0$ and $s \geq -\log T$, there exists an $(N \times N)$ orthogonal matrix $Q_a$ such that
\begin{equation}\label{est:WaI}
\left\|W_a(Q_ay,s) - \left\{\kappa + \frac{\kappa}{2ps} \left(\ell - \frac{|\bar{y}|^2}{2}\right) \right\}\right\|_{L^2_\rho} \leq C\frac{\log s}{s^2},
\end{equation}
where $\kappa = (p-1)^{-\frac{1}{p-1}}$, $\bar{y} = (y_1, \cdots, y_{\ell_a})$, $Q_a$ is continuous in terms of $a$ such that $\{Q_a^Te_j \vert j = \ell + 1, \dots, N\}$ spans the tangent plane $\pi_a$ to $S$ at $a$ and $Q_a^Te_i, i = 1, \cdots, \ell$ are the normal directions to $S$ at $a$, $L^2_\rho$ is the weighted $L^2$ space associated with the weight $\rho = \dfrac{1}{(4\pi)^{N/2}}e^{-\frac{|y|^2}{4}}$. Note that the estimate \eqref{est:WaI} implies \eqref{equ:asynerabar} (see Appendix C in \cite{ZAAaihp02}).

When $\ell =1$, in order to refine estimate \eqref{est:WaI}, the author in \cite{ZAAcmp02} subtracts from $W_a$ a 1-dimensional solution with the same profile. Let us do the same when $\ell = 2, \cdots, N-1$, and explain how the author succeeds in handing the case $\ell = 1$ and gets stuck when $\ell \geq 2$. To this end, we consider $\hat{u}(\bar{x}, t)$ with $\bar{x} = (x_1, \cdots, x_{\ell})$ a radially symmetric solution of \eqref{equ:problem} in $\mathbb{R}^{\ell}$ which blows up at time $T$ only at the origin with the profile \eqref{equ:case1} with $\ell_{\hat{a}} = \ell$ (see Appendix A.1 in \cite{NZens16} for the existence of such a solution). If the $\ell$-dimensional solution $\hat{u}$ is considered in $\RN$, then it blows up on the $(N-\ell)$ vector space $\{\bar{x} = 0\}$ in $\RN$. In particular, if we introduce 
\begin{equation}\label{def:whatI}
\hat{w}(\bar{y}, s) = (T-t)^\frac{1}{p-1}\hat{u}(\bar{x}, t), \quad \bar{y} = \frac{\bar{x}}{\sqrt{T-t}},\quad s = -\log(T-t),
\end{equation}
then, $\hat{w}$ is a radially symmetric solution of \eqref{equ:WaI} which satisfies
\begin{equation}\label{est:whatI}
\left\|\hat{w}(\bar{y},s) - \left\{\kappa + \frac{\kappa}{2ps} \left(\ell - \frac{|\bar{y}|^2}{2}\right) \right\}\right\|_{L^2_\rho} \leq C\frac{\log s}{s^2}.
\end{equation}
Noting that $\hat{u}$ and $\hat{w}$ maybe considered as solutions defined for all $y \in \RN$ (and independent of $y_{\ell+1}, \cdots, y_N$), and given that $\hat{w}(\bar{y},s)$ and $W_a(Q_ay,s)$ have the same behavior up to the first order (see \eqref{est:WaI} and \eqref{est:whatI}), we may try to use $\hat{w}$ as a sharper (though non-explicit) profile for $W_a(Q_ay,s)$. In fact, we have the following classification (see Corollary \ref{coro:class} below):\\

\noindent \textit{- Case 1: There is a symmetric, real $\ell_a \times \ell_a$ matrix $\mathcal{B} = \mathcal{B}(a) \ne 0$ such that 
\begin{equation}\label{equ:cas1clayb}
W_a(Q_ay,s) - \hat{w}(\bar{y},s) = \frac{1}{s^2}\left(\frac{1}{2}\bar{y}^T\mathcal{B}\bar{y} - tr(\mathcal{B})\right) + o\left(\frac{1}{s^2}\right)\;\; \text{as}\;\; s \to +\infty \;\; \text{in} \; L^2_\rho.
\end{equation}
- Case 2: There is a positive constant $C_0$ such that
\begin{equation}\label{equ:cas2clayb}
\|W_a(Q_ay,s) - \hat{w}(\bar{y},s) \|_{L^2_\rho} = \mathcal{O}\left(e^{-\frac s2}s^{C_0}\right) \;\; \text{as}\;\; s \to +\infty.
\end{equation}
}

\noindent If $\ell = 1$ ($\mathcal{B}(a) \in \mathbb{R}$), the author in \cite{ZAAcmp02} noted the following property
\begin{equation} \label{equ:casMEyb}
\hat{w}(y_1, s + \sigma_0) - \hat{w}(y_1,s) = \frac{2 \kappa \sigma_0}{ps^2}\left( \frac{1}{2}y_1^2 - 1\right) + o\left(\frac{1}{s^2}\right) \quad \text{in} \; L^2_\rho.
\end{equation}
Therefore, choosing $\sigma_0(a)$ such that $ \frac{2 \kappa \sigma_0}{p} = \mathcal{B}(a)$, we see from \eqref{equ:cas1clayb} and \eqref{equ:casMEyb} that 
$$W_a(Q_ay,s) - \hat{w}(y_1,s + \sigma_0(a)) = o\left(\frac{1}{s^2}\right)\;\; \text{as} \; s \to +\infty \quad \text{in} \; L^2_\rho.$$
From the classification given in \eqref{equ:cas1clayb} and \eqref{equ:cas2clayb}, only \eqref{equ:cas2clayb} holds and 
\begin{equation}\label{est:expIntro}
\|W_a(Q_ay,s) - \hat{w}(y_1,s + \sigma_0(a)) \|_{L^2_\rho} = \mathcal{O}\left(e^{-\frac s2}s^{C_0}\right)\;\; \text{as}\;\; s \to +\infty.
\end{equation}
If we return to the original variables $u(x,t)$ and $\hat{u}(x_1,t)$ through \eqref{def:WaI} and \eqref{def:whatI}, then \eqref{est:Zaacmp02} follows from  the transformation \eqref{def:utilde} together with estimate \eqref{est:expIntro} (see Appendix C in \cite{ZAAcmp02}). In other words, $\hat{w}(y_1, s+\sigma_0(a))$ serves as a sharp (though non-explicit) profile for $W_a(Q_ay,s)$ in the sense of \eqref{est:expIntro}. Using estimate \eqref{est:expIntro} together with some geometrical arguments, we are able to prove the $\mathcal{C}^{1, \frac 12 - \eta}$-regularity of the blow-up set, for any $\eta > 0$. Then, a further refinement of \eqref{est:expIntro} up to order of $\frac{e^{-\frac s2}}{s}$ together with a geometrical constraint on the blow-up set $S$ results in more regularity for $S$, which yields the $\mathcal{C}^2$-regularity.\\

If $\ell \geq 2$, the matrix $\mathcal{B}(a)$ in \eqref{equ:cas1clayb} has $\frac{\ell(\ell + 1)}{2}$ real parameters. Therefore, applying the trick of \cite{ZAAcmp02} (see \eqref{equ:casMEyb} above) only allows to manage one parameter; there remain $\frac{\ell(\ell + 1)}{2} - 1$ real parameters to be handled. This is the major reason which prevents the author in \cite{ZAAcmp02} and \cite{ZAAdm06} from deriving a similar estimate to \eqref{est:expIntro}, hence, the refined regularity of the blow-up set. Fortunately, we could overcome this obstacle thanks to a recent result by Nguyen and Zaag \cite{NZens16} (see Proposition \ref{prop:cons} below) where the authors show that for all symmetric, real $\ell \times \ell$ matrix $\mathcal{A}$, there is a solution $w_{\mathcal{A}}$ of equation \eqref{equ:WaI} in $\mathbb{R}^\ell$ such that 
\begin{equation}\label{equ:NZ15cons}
 w_{\mathcal{A}}(\bar{y},s) - \hat{w}(\bar{y},s) = \frac{1}{s^2}\left(\frac{1}{2}\bar{y}^T\mathcal{A}\bar{y} - tr(\mathcal{A})\right) + o\left(\frac{1}{s^2}\right) \quad \text{as} \; s \to +\infty\;\; \text{in} \; L^2_\rho.
\end{equation}
Hence, choosing $\mathcal{A} = \mathcal{B}(a)$, we see from \eqref{equ:NZ15cons}, \eqref{equ:cas1clayb} and \eqref{equ:cas2clayb} that 
\begin{equation}\label{est:expIntro1}
\|W_a(Q_ay,s) - w_{\mathcal{B}(a)}(\bar{y},s)\|_{L^2_\rho} \leq Ce^{-\frac s2}s^{C_0}, \quad \text{for $s$ large enough}.
\end{equation}
Exploiting estimate \eqref{est:expIntro1} and adapting the arguments given in \cite{ZAAcmp02} and \cite{ZAAdm06}, we are able to prove the $\mathcal{C}^2$-regularity of the blow-up set. \\

The next result shows how the $\mathcal{C}^2$-regularity is linked to the refined asymptotic behavior of $W_a$. More precisely, we link in the following theorem the refinement of the asymptotic behavior of $W_a$ to the second fundamental form of the blow-up set at $a$.
\begin{theorem}[Refined asymptotic behaviors linked to the geometrical description of the blow-up set]\label{theo:2} Under the hypotheses of Theorem \ref{theo:1}, there exists $\tilde{s}_0 \geq -\log T$ and $\delta > 0$ such that for all $a \in S_\delta = S \cap B(\hat a, 2\delta)$, there exists a continuous $(\ell \times \ell)$ symmetric matrix $\mathcal{B}(a)$ such that for all $s \geq \tilde{s}_0$, 
\begin{align}
\left\|W_a(Q_ay,s) - w_{\mathcal{B}(a)}(\bar{y},s) - \frac{\kappa e^{-\frac{s}{2}}}{2ps}\sum_{i = 1}^{\ell}y_i\sum_{k,j = \ell + 1}^N \frac{\Lambda^{(i)}_{k,j}(a)}{1 + \delta_{k,j}} (y_ky_j - 2\delta_{k,j})\right\|_{L^2_\rho}\leq C\frac{e^{-\frac{s}{2}}}{s^{\frac 32 - \nu}},\label{est:link}
\end{align}
for some $\nu \in (0, \frac 12)$, where $a \to \{\Lambda^{(i)}_{k,j}(a)\}_{\ell+1\leq j,k\leq N}$ is a continuous symmetric matrix representing the second fundamental form of the blow-up set at the blow-up point $a$ along the unitary normal vector $Q_a^Te_i$. Moreover, 
\begin{equation}\label{est:limitLam}
\Lambda^{(i)}_{k,j}(a) = \frac{p}{4\kappa}\lim_{s \to +\infty}se^{\frac s2}\int_{\RN}W_a(Q_ay,s)y_i(y_ky_j - 2\delta_{k,j})\rho(y)dy.
\end{equation}
\end{theorem}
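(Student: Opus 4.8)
The plan is to localize, inside the remainder $v_a(y,s):=W_a(Q_ay,s)-w_{\mathcal{B}(a)}(\bar y,s)$, the slowest-decaying piece and to show that among the degree-three Hermite modes only the ``mixed'' ones, carrying one normal index $i\le\ell$ and two tangential indices $k,j>\ell$, survive at the order $e^{-s/2}/s$, with a coefficient dictated by the bending of $S$. First I would write down the equation for $v_a$: since both $W_a(Q_a\cdot,s)$ and the trivial extension of $w_{\mathcal{B}(a)}$ solve \eqref{equ:WaI}, and $w_{\mathcal{B}(a)}=\kappa+O(1/s)$, we get $\partial_s v_a=\mathcal{L}v_a+V_a(y,s)\,v_a+O(v_a^2)$, with $\mathcal{L}=\Delta-\tfrac12 y\cdot\nabla+1$ and $V_a=p(|w_{\mathcal{B}(a)}|^{p-1}-\kappa^{p-1})=O(1/s)$. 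The a priori bound \eqref{est:expIntro1}, $\|v_a\|_{L^2_\rho}\le Ce^{-s/2}s^{C_0}$, makes the quadratic term $O(e^{-s}s^{2C_0})$ negligible at the target order $e^{-s/2}/s^{3/2-\nu}$, so everything reduces to the linear problem $\partial_s v_a=(\mathcal{L}+V_a)v_a$.

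Next I would expand $v_a$ in the Hermite eigenbasis of $\mathcal{L}$, whose eigenvalue on a degree-$|\alpha|$ mode is $1-|\alpha|/2$. Modes with $|\alpha|\ge4$ decay at least like $e^{-s}$ and lie in the error; the construction of $w_{\mathcal{B}(a)}$ together with the modulation conditions (that $a$ is a blow-up point and $Q_a$ is adapted to the normal/tangent frame) guarantee --- this is exactly Case 2, \eqref{equ:cas2clayb} --- that the components of $v_a$ on modes with $|\alpha|\le2$ do not exceed the scale $e^{-s/2}$. Hence the leading dynamics live on the eigenvalue-$(-1/2)$, degree-three subspace. Because $w_{\mathcal{B}(a)}$ depends only on $\bar y$, it contributes nothing to the mixed modes $h_{i,k,j}(y):=y_i(y_ky_j-2\delta_{k,j})$ with $i\le\ell$, $k,j>\ell$, so the mixed-mode part of $v_a$ coincides with that of $W_a(Q_a\cdot,s)$, which is precisely the quantity measured by \eqref{est:limitLam}. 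Proving \eqref{est:link} is thus equivalent to showing that the coefficient of $v_a$ on $h_{i,k,j}$ equals $\frac{\kappa}{2ps}e^{-s/2}\Lambda^{(i)}_{k,j}(a)$, up to the Hermite normalisation $1/(1+\delta_{k,j})$ and an error $O(e^{-s/2}/s^{3/2-\nu})$.

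The coefficient, and the geometric meaning of $\Lambda$, come from a ``bent profile'' comparison run in parallel with the mode analysis. Writing $S$ locally as a graph $z_i=\phi_i(z')$ over $\pi_a$ in the frame $Q_a$, with $\phi_i(0)=0$ and $\nabla\phi_i(0)=0$, the blow-up sheet sits at self-similar normal height $\tfrac12 e^{-s/2}\sum_{k,j}\Lambda^{(i)}_{k,j}(a)y_ky_j$, where $\Lambda^{(i)}(a)=D^2\phi_i(0)$ is the second fundamental form along $Q_a^Te_i$. Matching $W_a(Q_ay,s)\approx\hat w\big(\bar y-\tfrac12 e^{-s/2}(\cdots),s\big)$ and Taylor expanding, using $\partial_{y_i}\hat w\approx -\kappa y_i/(2ps)$ from \eqref{est:whatI}, reproduces exactly the term displayed in \eqref{est:link}, the weight $1/(1+\delta_{k,j})$ and the shift $-2\delta_{k,j}$ being the Hermite normalisation and the leftover degree-one piece being absorbed by the modulation. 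On the dynamical side, projecting the linear equation onto $h_{i,k,j}$ gives $\beta'=-\tfrac12\beta+O(\beta/s)+F_{i,k,j}(s)$; setting $\beta=e^{-s/2}b$, the finiteness of the limit in \eqref{est:limitLam} forces the constant (free) part of $b$ to vanish, leaving the forced behaviour $b\sim c/s$. Feeding the a priori bound into the $O(1/s)$ couplings and integrating the eigenvalue-$(-1/2)$ dynamics then bootstraps the non-extracted part of $v_a$ down to $e^{-s/2}/s^{3/2-\nu}$, the loss $s^\nu$ reflecting that $V_a$ is only $\tfrac1s+O(\log s/s^2)$. Continuity of $a\mapsto\Lambda(a)$ finally follows from \eqref{est:limitLam} as a uniform-in-$a$ limit of the continuous maps $a\mapsto\int_{\RN}W_a(Q_ay,s)h_{i,k,j}(y)\rho(y)\,dy$.

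I expect the main obstacle to be the two intertwined points of the third step: proving that the free $e^{-s/2}$-component in the mixed modes genuinely vanishes --- which requires correctly encoding the modulation conditions attached to ``$a\in S$'' and to the alignment of $Q_a$, now vector-valued since the normal bundle is $\ell$-dimensional --- and matching the dynamically defined coefficient $\Lambda^{(i)}_{k,j}(a)$ with the geometric second fundamental form uniformly in $a$. In codimension $\ell\ge2$ the bookkeeping of the full family $\{h_{i,k,j}\}$ and of the rotations $Q_a\in O(N)$ with coupled normal and tangent blocks is exactly what blocked \cite{ZAAcmp02} and \cite{ZAAdm06}, and is now tractable only because \eqref{est:expIntro1} supplies the sharp $\frac{\ell(\ell+1)}{2}$-parameter starting profile $w_{\mathcal{B}(a)}$.
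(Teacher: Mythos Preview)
Your spectral decomposition and identification of the degree-three eigenspace as the critical one are correct, and the ODE heuristics for the Hermite coefficients are in the right direction. But two points constitute genuine gaps.

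First, the degree-three modes that obstruct the bound \eqref{est:link} are not the mixed ones $h_{i,k,j}$ with $i\le\ell$, $k,j>\ell$; those are precisely the ones you want to \emph{keep}. The obstruction is the purely tangential block $|\bar\beta|=0$ (all three indices $>\ell$): for such $\beta$ the projected equation reads $g_\beta'+\tfrac12 g_\beta=O(I_a/s^{3/2})$, so the homogeneous solution is $Ce^{-s/2}$ with an a priori free constant $C=\lambda_\beta(a)$. Unless you prove $\lambda_\beta(a)=0$ for every such $\beta$, the remainder $v_a$ stays at level $e^{-s/2}$ and \eqref{est:link} fails. For the mixed modes ($|\bar\beta|=1$) the extra $1/s$ in the projected operator already forces the homogeneous solution to $e^{-s/2}/s$, so there is no ``free $e^{-s/2}$ component'' to kill there; the coefficient you find \emph{is} the datum $\lambda_\beta(a)$ you later identify with $\Lambda^{(i)}_{k,j}(a)$.

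Second, your proposed mechanism for killing the unwanted constants --- ``the finiteness of the limit in \eqref{est:limitLam} forces the free part of $b$ to vanish'' --- is circular: \eqref{est:limitLam} is part of the conclusion, not a hypothesis. The paper's device is different and is what you are missing. One compares $w_a$ at a point $y$ with $w_b$ at the corresponding point, where $b=b(a,y,s)\in S$ is the nearby blow-up point obtained by projecting $a+e^{-s/2}Q_ay$ onto $S$ along the normal directions (this is Proposition~\ref{prop:geocon}). Applied with $\bar y=0$ and using the intermediate $\mathcal{C}^{1,1-\eta}$ regularity of $S$ (Proposition~\ref{prop:C11eta}), this forces $\partial_{y_k}w_a(0,\tilde y,s)$ for tangential $k$ to be $O(e^{-(1+\alpha^*)s/2})$, which after passing to the limit $b\to a$ yields $\lambda_\beta(a)=0$ for every $|\beta|=3$, $|\bar\beta|=0$. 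Only after this step can one iterate the mode analysis to reach the level $e^{-s/2}/s$ on the mixed block, and then the \emph{same} geometric constraint, now evaluated at $y=e_{i^*}+\epsilon e_j$, identifies $\lambda_{e_i+e_k+e_j}(a)$ with $\tfrac{\kappa}{2p(1+\delta_{k,j})}\partial^2_{\xi_k\xi_j}\gamma_{a,i}(0)$. Your ``bent profile'' Taylor expansion is a correct heuristic for this last identification, but it does not substitute for the rigorous two-point comparison, and it cannot by itself eliminate the tangential $e^{-s/2}$ modes.
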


\bigskip

In Section \ref{sec:2}, we give the main steps of the proof of Theorems \ref{theo:1} and \ref{theo:2}. We leave all long and technical proofs to Section \ref{sec:proofAll}.

\section{Setting of the problem and strategy of the proof of the $\mathcal{C}^2$-regularity of the blow-up set.} \label{sec:2}
In this section we give the main steps of the proof of Theorems \ref{theo:1} and \ref{theo:2}. All long and technical proofs will be left to the next section. We proceed in 3 parts corresponding to 3 separate subsections. For the reader's convenience, we briefly describe these parts as follows:
\begin{itemize}
\item Part 1: We derive a sharp blow-up behavior for solutions of equation \eqref{equ:problem} having the profile \eqref{equ:case1} with $\ell_{\hat{a}} \in \{1, \cdots, N-1\}$ such that the difference between the solution and this sharp blow-up behavior goes beyond all logarithmic scales of the variable $T-t$. The main result in this step is stated in Proposition \ref{prop:expodecay}. 
\item Part 2: Through the introduction of a local chart, we give a geometrical constraint on the expansion of the solution linked to the asymptotic behavior (see Proposition \ref{prop:geocon} below). This geometrical constraint is a crucial point which is the bridge between the asymptotic behavior and the regularity of the blow-up set. 
\item Part 3: Using the sharp blow-up behavior derived in Part 1, we first get the $\mathcal{C}^{1, \frac{1}{2} - \eta}$- regularity of the blow-up set $S$ (see Proposition \ref{prop:C1alph} below), then together with the geometrical constraint, we achieve the $\mathcal{C}^{1, 1 - \eta}$-regularity of $S$ (see Proposition \ref{prop:C11eta} below). With this better regularity and the geometric constraint, we further refine the asymptotic behavior (see Proposition \ref{prop:furRe} below) and use again the geometric constraint to get $\mathcal{C}^2$ -regularity of $S$, which yields the conclusion of Theorems \ref{theo:1} and \ref{theo:2}. 
\end{itemize}

The reader should be noticed that Parts 1 and 2 are independent, whereas Part 3 is a combination of the first two parts. Throughout this paper, we work under the hypotheses of Theorem \ref{theo:1}. Since $S$ is locally near $\hat a$ a manifold of dimension $N - \ell$, we may assume that there is a $\mathcal{C}^1$ function $\gamma$ such that 
\begin{equation}\label{def:Sdelta}
S_\delta \equiv S \cap B(\hat{a}, 2\delta) = \text{graph}(\gamma) \cap B(\hat{a}, 2\delta),
\end{equation}
for some $\delta >0$ and $\gamma \in \mathcal{C}^1((-\delta_1,\delta_1)^{N - \ell}, \mathbb{R}^\ell)$ with $\delta_1 > 0$.

In what follows, $\ell \in \{1, \cdots, N-1\}$ is fixed, and for all $z = (z_1, \cdots, z_N) \in \RN$, we denote by $\bar{z}$ the first $\ell$ coordinates of $z$, namely $\bar{z} = (z_1, \cdots, z_\ell)$, and by $\tilde{z}$ the last $(N-\ell)$ coordinates of $z$, namely $\tilde{z} = (z_{\ell + 1}, \cdots, z_N)$. We usually use indices $i$, $m$ for the range $1, \cdots, \ell$ and indices $j$, $k$, $n$ for the range $\ell + 1, \cdots, N$.

\subsection{Part 1: Blow-up behavior beyond all logarithmic scales of $(T-t)$.}
In this subsection, we use the ideas given by Fermanian and Zaag \cite{FZnon00} together with a recent result by Nguyen and Zaag in \cite{NZens16} in order to derive a sharp (though non-explicit) profile for blow-up solutions of \eqref{equ:problem} in the sense that the first order in the expansion of the solution around this sharp profile goes beyond all logarithmic scales of $(T-t)$ and reaches to polynomial scales of $(T-t)$. In fact, we replace the 1-scaling parameter $\sigma$ in \eqref{est:Zaacmp02} by a $\frac{\ell(\ell+1)}{2}$-parameters family, which generates a substitution for $\tilde{u}_\sigma$ \eqref{def:utilde} and serves as a sharp profile for solutions having the behavior \eqref{equ:case1} with $\ell_{\hat{a}} \in \{1, \dots, N-1\}$. The main result in this part is Proposition \ref{prop:expodecay} below.

Consider $a \in S_\delta$. If $W_a(y,s)$ and $\hat{w}(\bar{y},s)$ are defined as in \eqref{def:WaI} and \eqref{def:whatI}, then we know from \cite{ZAAaihp02} that
\begin{equation}\label{est:Wa}
\left\|W_a(Q_ay,s) - \left\{\kappa + \frac{\kappa}{2ps} \left(\ell - \frac{|\bar{y}|^2}{2}\right) \right\}\right\|_{L^2_\rho} \leq C\frac{\log s}{s^2},
\end{equation}
and 
\begin{equation}\label{est:what}
\left\|\hat{w}(\bar{y},s) - \left\{\kappa + \frac{\kappa}{2ps} \left(\ell - \frac{|\bar{y}|^2}{2}\right) \right\}\right\|_{L^2_\rho} \leq C\frac{\log s}{s^2}.
\end{equation}

The first step is to classify all possible asymptotic behaviors of $W_a(Q_ay, s) - \hat w(\bar y, s)$ as $s$ goes to infinity. To do so, we shall use the following result which is inspired by Fermanian and Zaag \cite{FZnon00}: 

\begin{proposition}[Classification of the difference between two solutions of \eqref{equ:WaI} having the same profile]\label{prop:class} Assume that $W_1$ and $W_2$ are two solutions of \eqref{equ:WaI} verifying
\begin{equation}\label{hyp:W1W2}
i = 1,2, \quad \left\|W_i(y,s) - \left\{\kappa + \frac{\kappa}{2ps} \left(\ell - \frac{|\bar y|^2}{2}\right) \right\}\right\|_{L^2_\rho} \leq C\frac{\log s}{s^2},
\end{equation}
where $\bar{y} = (y_1, \cdots, y_\ell)$ for some $\ell \in \{1, \cdots, N - 1\}$. Then, one of the two following cases occurs: \\
- Case 1: there is a symmetric, real $(\ell \times \ell)$ matrix $\mathcal{B} \ne 0$ such that 
\begin{equation}\label{equ:case1class}
W_1(y,s) - W_2(y,s) = \frac{1}{s^2}\left(\frac{1}{2}\bar{y}^T \mathcal{B}\bar{y} - tr(\mathcal{B})\right) + o\left(\frac{1}{s^2}\right) \quad \text{as} \;\; s \to +\infty \;\;\text{in}\; L^2_\rho.
\end{equation}
- Case 2: there is $C_0 > 0$ such that 
\begin{equation}\label{equ:case2class}
\|W_1(y,s) - W_2(y,s)\|_{L^2_\rho} = \mathcal{O}\left(e^{-\frac s2}s^{C_0}\right) \quad \text{as} \; s \to +\infty.
\end{equation}
\end{proposition}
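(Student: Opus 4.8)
The plan is to pass to the difference $V=W_1-W_2$ and reduce the statement to the spectral analysis of the flow linearized at the constant solution $\kappa$. Subtracting the two copies of \eqref{equ:WaI} and expanding the nonlinearity by the mean value theorem around the common first-order profile $\kappa+\frac{\kappa}{2ps}(\ell-\frac{|\bar y|^2}{2})$, I obtain
\[
\partial_s V = \mathcal{L}_0 V + \frac{1}{s}\,q(y)\,V + R,\qquad \mathcal{L}_0=\Delta-\tfrac12 y\cdot\nabla+1,
\]
where $\mathcal{L}_0$ is the linearization of \eqref{equ:WaI} at $\kappa$, the multiplier is $q(y)=\frac12\bigl(\ell-\frac{|\bar y|^2}{2}\bigr)+\mathcal{O}(1/s)$ and comes from differentiating $|w|^{p-1}w$ at the profile, and $R$ is at least quadratic in $V$, hence negligible with respect to the linear terms once the a priori bound is inserted. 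The starting point is precisely that bound, deduced from \eqref{hyp:W1W2} by the triangle inequality, namely $\|V(\cdot,s)\|_{L^2_\rho}\le C\frac{\log s}{s^2}$; in particular $V\to 0$.

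Recall that $\mathcal{L}_0$ is self-adjoint in $L^2_\rho$ with eigenvalues $1-\frac m2$, $m\in\mathbb{N}$, the eigenfunctions being the Hermite polynomials of degree $m$; thus the expanding part ($m=0,1$) has eigenvalues $1,\frac12$, the null part ($m=2$) has eigenvalue $0$, and the stable part ($m\ge3$) has eigenvalues $\le-\frac12$. I decompose $V=V_++V_0+V_-$ along these three pieces and write the corresponding ODEs, treating $\frac1s qV+R$ as a forcing term. Because $V\to0$, each expanding coordinate $\beta_\lambda$ is slaved to the forcing through $\beta_\lambda(s)=-\int_s^\infty e^{\lambda(s-\sigma)}(\text{forcing})(\sigma)\,d\sigma$ with $\lambda\in\{1,\frac12\}$, hence is negligible; the stable component $V_-$ decays at least like $e^{-s/2}$, the $m=3$ rate. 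The whole behavior is therefore dictated by the null component $V_0$.

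Inside the degree-two eigenspace I expand $V_0$ against the basis $\{y_iy_m-2\delta_{im}\}$, distinguishing the three blocks $\bar y\times\bar y$, the mixed $\bar y\times\tilde y$, and $\tilde y\times\tilde y$. A direct Hermite computation shows that multiplication by $q$ preserves the degree-two space modulo higher degree, acts on it block-diagonally, and, crucially, since $q$ depends only on $\bar y$, does not feed the purely $\bar y$ block from the others; the induced scalars are $-2$ on the $\bar y\times\bar y$ block, $-1$ on the mixed block, and $0$ on the $\tilde y\times\tilde y$ block. Each null coordinate $\beta$ thus obeys $\dot\beta=\frac{\mu}{s}\beta+(\text{higher order forcing})$ with $\mu\in\{-2,-1,0\}$, whose homogeneous solutions scale like $s^{\mu}$. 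The a priori bound $\|V\|_{L^2_\rho}\le C\log s/s^2$ forbids the $s^{-1}$ and $s^0$ free modes, so the mixed and $\tilde y\times\tilde y$ coordinates are slaved to the forcing and are $o(s^{-2})$; only the $\bar y\times\bar y$ coordinates, with $\mu=-2$, can carry a genuine $s^{-2}$ homogeneous term. Setting $\beta=s^{-2}\gamma$ turns its equation into $\dot\gamma=s^2 f$ with $f=\mathcal{O}(s^{-4})$, so $\gamma$ converges, and these limits assemble into a symmetric $\ell\times\ell$ matrix $\mathcal{B}$ with $V_0=\frac1{s^2}\bigl(\frac12\bar y^T\mathcal{B}\bar y-tr(\mathcal{B})\bigr)+o(s^{-2})$.

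The dichotomy now follows: if $\mathcal{B}\neq0$ we are in Case~1 and \eqref{equ:case1class} holds; if $\mathcal{B}=0$ the leading null part vanishes, and a bootstrap removes every polynomial-order contribution, leaving $V$ controlled by the stable modes, which yields Case~2 with $\|V\|_{L^2_\rho}=\mathcal{O}(e^{-s/2}s^{C_0})$, the polynomial factor $s^{C_0}$ absorbing the resonance between the $\frac1s$ potential and the $m=3$ eigenvalue $-\frac12$. I expect the main obstacle to be twofold: first, rigorously controlling the quadratic remainder $R$ and the inter-mode couplings so that the reduction to the scalar ODEs above is valid up to genuinely negligible forcing, which is where the a priori bound must be bootstrapped; and second, in Case~2, running the argument that collapses the entire polynomial hierarchy once the $\bar y$-null part is shown to vanish, thereby upgrading decay faster than every power of $1/s$ into the genuine exponential rate $e^{-s/2}s^{C_0}$. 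This classification follows the strategy of Fermanian and Zaag \cite{FZnon00}.
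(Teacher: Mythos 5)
Your outline is correct and follows essentially the same route as the paper: there too the difference $g=W_1-W_2$ solves the linearized equation $\partial_s g=\Lc g+\alpha g$ with $\Lc=\Delta-\frac12 y\cdot\nabla+1$, the potential satisfies $\alpha(y,s)=-\frac{1}{4s}\sum_{i=1}^{\ell}h_2(y_i)+\mathcal{O}\bigl(s^{-3/2}(1+|y|^3)\bigr)$ (which is your $\frac1s q(y)$), the mode-competition lemma of Fermanian--Zaag \cite{FZnon00} reduces everything to a single dominant eigenmode, and the null-mode ODEs $g_\beta'=-\frac1s\bigl(\sum_{i\le\ell}\beta_i\bigr)g_\beta+\mathcal{O}(\log s\,s^{-7/2})$ carry exactly your scalars $0,-1,-2$ on the three degree-two blocks, with the a priori bound killing the $s^0$ and $s^{-1}$ free modes. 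Two harmless inaccuracies in your write-up: the paper takes the mean-value form of the nonlinearity, so the equation for $g$ is exactly linear and there is no quadratic remainder $R$ to control; and in Case 1 the stable part is only $\mathcal{O}(\|g\|_{L^2_\rho}/s)$, not $\mathcal{O}(e^{-s/2})$ --- which is $o(s^{-2})$ and is all that the conclusion requires.
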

\begin{proof} The proof follows from the strategy given in \cite{FZnon00} for the difference of two solutions with the radial profile $(\ell = N)$. Note that the case when $\ell = 1$ was treated in \cite{ZAAcmp02}. Since some technical details are straightforward, we briefly give the main steps of the proof in Section \ref{sec:3} and just emphasize on the novelties. 
\end{proof}

An application of Proposition \ref{prop:class} with $W_1(y,s) = W_a(Q_ay,s)$ and $W_2(y,s) = \hat w(\bar y, s)$ yields the following corollary directly:
\begin{corollary} \label{coro:class}  As $s$ goes to infinity, one of the two following cases occurs:\\
- Case 1: there is a symmetric, real $(\ell \times \ell)$ matrix $\mathcal{B} = \mathcal{B}(a) \ne 0$ continuous as a function of $a$ such that 
\begin{equation}\label{equ:case1class}
W_a(Q_ay,s) - \hat{w}(\bar{y},s) = \frac{1}{s^2}\left(\frac{1}{2}\bar{y}^T \mathcal{B}\bar{y} - tr(\mathcal{B})\right) + o\left(\frac{1}{s^2}\right) \quad \text{in}\; L^2_\rho.
\end{equation}
- Case 2: there is $C_0 > 0$ such that 
\begin{equation}\label{equ:case2class}
\|W_a(Q_ay,s) - \hat{w}(\bar{y},s)\|_{L^2_\rho} = \mathcal{O}\left(e^{-\frac s2}s^{C_0}\right).
\end{equation}
\end{corollary}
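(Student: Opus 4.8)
The plan is to apply Proposition~\ref{prop:class} with $W_1(y,s)=W_a(Q_ay,s)$ and $W_2(y,s)=\hat w(\bar y,s)$, and then to upgrade the matrix produced in Case~1 into a function of $a$ that is continuous on $S_\delta$. I would first check that both candidates solve \eqref{equ:WaI}. For $\hat w$ this is immediate: by construction it solves the $\ell$-dimensional version of \eqref{equ:WaI}, and since it is independent of $y_{\ell+1},\dots,y_N$, the extra Laplacian and drift contributions vanish, so it solves the full equation in $\RN$. For $W_a(Q_a\cdot,\cdot)$ I would invoke the orthogonal invariance of \eqref{equ:WaI}: since $Q_a$ is orthogonal, $\Delta$ and the terms $-\tfrac{W}{p-1}+|W|^{p-1}W$ are unchanged, while $y\cdot\nabla\bigl(W_a(Q_ay)\bigr)=(Q_ay)\cdot(\nabla W_a)(Q_ay)$, so $W_a(Q_a\cdot,\cdot)$ solves \eqref{equ:WaI} whenever $W_a$ does. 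The first-order hypothesis \eqref{hyp:W1W2} is then exactly \eqref{est:Wa} for $W_1$ and \eqref{est:what} for $W_2$, so Proposition~\ref{prop:class} yields, for each fixed $a$, either the quadratic expansion \eqref{equ:case1class} with a symmetric $\mathcal B=\mathcal B(a)\ne0$, or the exponential decay \eqref{equ:case2class}.

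It then remains to give a formula for $\mathcal B(a)$ and to establish its continuity. The leading term $\tfrac12\bar y^T\mathcal B\bar y-tr(\mathcal B)$ lives in the degree-two Hermite eigenspace in the $\bar y$ variables: writing $e_{im}(\bar y)=y_iy_m$ for $i\ne m$ and $e_{ii}(\bar y)=y_i^2-2$, which are pairwise $L^2_\rho$-orthogonal (note $\int y_i^2\rho=2$), one has $\tfrac12\bar y^T\mathcal B\bar y-tr(\mathcal B)=\sum_{i<m}\mathcal B_{im}e_{im}+\tfrac12\sum_i\mathcal B_{ii}e_{ii}$. Projecting \eqref{equ:case1class} onto $e_{im}$, I would read off explicit constants $\gamma_{im}$ such that
\begin{equation*}
\mathcal B_{im}(a)=\gamma_{im}\lim_{s\to+\infty}s^2\int_{\RN}\bigl(W_a(Q_ay,s)-\hat w(\bar y,s)\bigr)\,e_{im}(\bar y)\,\rho(y)\,dy,
\end{equation*}
a formula that remains meaningful in Case~2, where the scaled projection tends to $0$; thus $\mathcal B(a)$ is defined on all of $S_\delta$ and $\mathcal B(a)\ne0$ is precisely the dichotomy between the two cases.

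For continuity I would proceed in two steps. For each fixed $s$ (equivalently each $t=T-e^{-s}<T$), the map $a\mapsto W_a(Q_a\cdot,s)$ is continuous in $L^2_\rho$: since $W_a(y,s)=e^{-s/(p-1)}u\bigl(a+e^{-s/2}y,\,T-e^{-s}\bigr)$ is continuous in $(a,y)$ away from blow-up and is bounded by $e^{-s/(p-1)}\|u(\cdot,T-e^{-s})\|_{L^\infty}$, dominated convergence applies, and $a\mapsto Q_a$ is continuous by hypothesis; hence each $s$-slice of the integral above is continuous in $a$. Consequently $\mathcal B(a)$ is a pointwise limit of continuous functions, and it is continuous as soon as this limit is \emph{uniform} in $a\in S_\delta$. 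This uniformity is the main obstacle: I would obtain it by revisiting the proof of Proposition~\ref{prop:class} and checking that all its constants, and in particular the rate hidden in the $o(1/s^2)$ of \eqref{equ:case1class}, depend only on the a priori constant $C$ in \eqref{est:Wa}--\eqref{est:what}, which is uniform in $a\in S_\delta$ by Zaag~\cite{ZAAaihp02}. Granting this, the scaled projections converge uniformly, and $\mathcal B(a)$ is the uniform limit of continuous functions, hence continuous, which completes the proof of the corollary.
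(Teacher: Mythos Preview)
Your proof is correct and follows essentially the same route as the paper: apply Proposition~\ref{prop:class} with $W_1(y,s)=W_a(Q_ay,s)$ and $W_2(y,s)=\hat w(\bar y,s)$, then argue continuity of $\mathcal B(a)$ from the continuity of $a\mapsto W_a$ together with the uniformity in $a$ of the estimates underlying Proposition~\ref{prop:class}. The paper is terser on the continuity point, simply invoking the stability of the behavior \eqref{est:Wa} with respect to blow-up points established in Zaag~\cite{ZAAaihp02} (Proposition~3.1 and Section~6.1 there), whereas you spell out the explicit Hermite projection formula for $\mathcal B_{im}(a)$ and identify the uniform $o(1/s^2)$ as the key ingredient; this is a welcome elaboration of what the paper only sketches in a remark.
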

\begin{remark} Note that the continuity of $\mathcal{B}$ comes from the continuity of $W_a$ with respect to $a$, where $W_a$ behaves as in \eqref{est:Wa}. In particular, Zaag \cite{ZAAaihp02} showed the stability of the blow-up behavior \eqref{est:Wa} with respect to blow-up points (see Proposition 3.1 and Section 6.1 in \cite{ZAAaihp02}).
\end{remark}

In the next step, we recall the recent result by Nguyen and Zaag \cite{NZens16}, which gives the construction of solutions for equation \eqref{equ:WaI} with some prescribed behavior. 
\begin{proposition}[Construction of solutions for \eqref{equ:WaI} with some prescribed behavior]\label{prop:cons} Consider $\ell \in \{1, \cdots, N-1\}$. For all $\mathcal{A} \in \mathbb{M}_\ell(\R)$, where $\mathbb{M}_\ell(\R)$ is the set of all symmetric, real $(\ell \times \ell)$-matrix, there exists a solution $w_\mathcal{A}(y,s)$ of \eqref{equ:WaI} defined on $\mathbb{R}^N \times [s_0(\mathcal{A}), +\infty)$ such that
\begin{equation}\label{est:whatwA}
w_\mathcal{A}(\bar{y},s) - \hat{w}(\bar{y},s) = \frac{1}{s^2}\left(\frac{1}{2}\bar{y}^T\mathcal{A}\bar{y} - tr(\mathcal{A})\right) + o\left(\frac{1}{s^2}\right) \quad \text{as}\; s \to +\infty \;\; \text{in}\;\; L^2_\rho,
\end{equation}
where $\hat{w}$ is the radially symmetric, $\ell$-dimensional solution of \eqref{equ:WaI} satisfying \eqref{est:what}.
\end{proposition}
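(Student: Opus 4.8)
The plan is to construct $w_\mathcal{A}$ by a topological shooting argument in the spirit of Bricmont--Kupiainen and Merle--Zaag, tuning a finite set of initial parameters so as to prescribe the neutral-mode behavior. Since both $\hat w$ and the right-hand side of \eqref{est:whatwA} depend only on $\bar y$, I would construct $w_\mathcal{A}$ as a solution of \eqref{equ:WaI} in $\R^\ell$ depending only on $\bar y$, and then extend it to $\RN$ as a function independent of $\tilde y$. Writing the candidate as $w = \hat w + q$, equation \eqref{equ:WaI} becomes
\[
\partial_s q = \mathcal{L} q + V(y,s)\,q + B(q), \qquad \mathcal{L} = \Delta - \tfrac12 y\cdot\nabla + 1,
\]
where $V = p\big(|\hat w|^{p-1} - \kappa^{p-1}\big)$ is a potential that tends to $0$ as $s\to\infty$ (because $\hat w \to \kappa$, by \eqref{est:what}), and $B(q) = |\hat w + q|^{p-1}(\hat w+q) - |\hat w|^{p-1}\hat w - p|\hat w|^{p-1}q$ collects the purely nonlinear (quadratic and higher) terms. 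The operator $\mathcal{L}$ is self-adjoint in $L^2_\rho$ with spectrum $\{1 - \tfrac m2 : m \in \N\}$ and Hermite-type eigenfunctions; the decisive eigenspace is the neutral one $m=2$ (eigenvalue $0$), whose $\bar y$-part is spanned by $\{y_i^2 - 2\}_{1\le i\le \ell}$ and $\{y_i y_m\}_{1\le i< m\le \ell}$, of dimension exactly $\frac{\ell(\ell+1)}{2}$. The target in \eqref{est:whatwA} is precisely the projection of $q$ onto this eigenspace, identified with the symmetric matrix $\mathcal{A}$.

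Next I would decompose $q = \sum_m q_m$ along the spectral subspaces of $\mathcal{L}$ and set up a shrinking (trapping) set $V(s)$ prescribing the size of each component: the unstable parts $m=0$ (dimension $1$, eigenvalue $1$) and $m=1$ (dimension $\ell$, eigenvalue $\tfrac12$) are required to remain $\ll s^{-2}$, the neutral part $q_2$ is required to equal $\frac{1}{s^2}\mathcal{A} + o(s^{-2})$ in the degree-two directions, and the stable part (negative eigenvalues) decays faster. Using the linear semigroup bounds for $\mathcal{L}$ in $L^2_\rho$, together with the decay of $V$ and the quadratic estimate on $B(q)$, one shows that once $q(s)\in V(s)$ on an interval, every component except the finitely many unstable and tunable ones is strictly interior to its bound. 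This is the standard reduction to a finite-dimensional problem, with free parameters the initial projections onto the $m=0,1$ modes and onto $q_2$.

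The heart of the argument, and the main obstacle, is the precise dynamics of the neutral component $q_2$. Because its linear eigenvalue is $0$, the flow of $\mathcal{L}$ produces no decay at all, so the $s^{-2}$ rate must come entirely from the coupling of $q_2$ with the $O(s^{-1})$ correction of $\hat w$ recorded in \eqref{est:what} (through the potential $V$) and from the projection of $B(q)$. Projecting the equation for $q$ onto each neutral direction, the symmetric-matrix coefficient $B_2(s)$ of $q_2$ obeys a linear system of the form
\[
\frac{d}{ds} B_2 = -\frac{2}{s}\,B_2 + (\text{error}),
\]
where the effective damping coefficient $-2/s$ is \emph{determined by the profile} and is exactly the value for which the homogeneous solutions are the $s^{-2}$-multiples of an arbitrary constant symmetric matrix. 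The prescribed $\mathcal{A}$ therefore appears as this free integration constant, fixed by the choice of the initial neutral projection. The delicate computation is to show that all error contributions --- from $B(q)$, from the sub-leading part of $V$, and from the coupling with the other modes --- are $o(s^{-3})$ after integration, so that $s^2 B_2(s) \to \mathcal{A}$; getting the coefficient $-2/s$ right (rather than a nearby value) is what pins down both the rate and the fact that $\mathcal{A}$ ranges over all of $\mathbb{M}_\ell(\R)$.

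Finally, I would close the construction by a topological (Brouwer-degree / Ważewski) step dealing with the genuinely unstable modes $m=0,1$: on the boundary of the corresponding $(1+\ell)$-dimensional parameter set the solution exits $V(s)$ transversally, so a continuity/index argument produces an interior choice of these parameters for which $q(s)$ stays in $V(s)$ for all $s\ge s_0(\mathcal{A})$. For that solution, $w_\mathcal{A} = \hat w + q$ is a solution of \eqref{equ:WaI} defined on $\RN \times [s_0(\mathcal{A}),+\infty)$ whose neutral component realizes exactly the expansion \eqref{est:whatwA}, which is the desired conclusion.
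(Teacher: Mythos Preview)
Your sketch is correct and is essentially an outline of the construction carried out in \cite{NZens16}, which is precisely what the paper invokes: the paper's own proof of this proposition is simply a citation to Theorem~3 of \cite{NZens16} (stated there for $\ell=N$), together with the remark that the case $\ell\le N-1$ follows by regarding the $\ell$-dimensional solution as an $N$-dimensional one that is independent of $y_{\ell+1},\dots,y_N$. You make this last point yourself in your first paragraph.

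The substantive content you outline --- linearizing around $\hat w$, the spectral decomposition of $\mathcal L$, the reduction to a finite-dimensional problem via a shrinking set, the Brouwer/index argument on the $(1+\ell)$ unstable directions, and above all the neutral-mode ODE $B_2'=-\tfrac{2}{s}B_2+o(s^{-3})$ whose homogeneous solutions are exactly the $s^{-2}$-multiples of an arbitrary symmetric matrix --- is the correct mechanism and matches the strategy of \cite{NZens16}. So there is no genuine difference in approach; you have simply unpacked what the paper leaves to the reference. The one point worth flagging is that the identification of the effective coefficient $-2/s$ (rather than, say, $-1/s$) requires computing the projection of $Vq_2$ onto the degree-two eigenspace using the refined expansion \eqref{est:what} of $\hat w$; you allude to this but do not carry it out, and it is the computation on which the whole $\frac{\ell(\ell+1)}{2}$-parameter freedom hinges.
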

\begin{proof} See Theorem 3 in \cite{NZens16}. Although that result is stated for the case $\ell = N$, we can extend it to the case when $\ell \leq N-1$ by considering solutions of \eqref{equ:WaI} as $\ell$-dimensional solutions, those artificially generated by adding irrelevant space variables $(y_{\ell + 1}, \cdots, y_{N})$ to the domain of definition of the solutions. 
\end{proof}

The following result is a direct consequence of Corollary \ref{coro:class} and Proposition \ref{prop:cons}:
\begin{proposition}[Sharp (non-explicit) profile for solutions of \eqref{equ:problem} having the behavior \eqref{equ:case1} with $\ell \leq N-1$]\label{prop:expodecay} There exist $s_0 > 0$ and a continuous matrix $\mathcal{B}:\;S_\delta \to M_{\ell}(\R)$, such that for all $a \in S_\delta$ and $s \geq s_0$,
\begin{equation}\label{est:expo_s}
\left\|W_a(Q_ay,s) - w_{\mathcal{B}(a)}(\bar{y}, s)\right\|_{L^2_\rho} \leq Ce^{-\frac s2}s^{C_0},
\end{equation}
where $w_{\mathcal{B}}$ is the solution constructed as in Proposition \ref{prop:cons}, $C_0>0$ is given in Proposition \ref{prop:class}. Moreover, we have \\
$(i)\;\;$ For all $s \geq s_0 + 1$,
\begin{equation}\label{est:extendys}
\sup_{|y| \leq K\sqrt{s}}\left|W_a(y, s) - w_{\mathcal{B}(a)}(\bar{y}_a, s)\right| \leq C(K)e^{-\frac s2}s^{\frac{3}{2} + C_0},
\end{equation}
where $\bar{y}_a = (y\cdot Q_ae_1, \cdots, y\cdot Q_ae_\ell)$. \\
$(ii)\;$ For all $t \in [T - e^{-s_0-1}, T)$, 
\begin{align}
\sup_{|x - a| \leq K\sqrt{(T-t)|\log (T-t)|}}&\left|(T-t)^{\frac{1}{p-1}}u(x,t) - w_{\mathcal{B}(a)}(\bar{y}_{a,x}, -\log(T-t))\right|\nonumber\\
&\qquad \qquad \qquad\leq C(K)(T-t)^{\frac{1}{2}}|\log(T-t)|^{\frac{3}{2} + C_0},\label{est:beyLogT_t}
\end{align}
where $\bar{y}_{a,x} = \frac{1}{\sqrt{T-t}}((x-a)\cdot Q_ae_1,\cdots, (x-a)\cdot Q_ae_\ell)$.
\end{proposition}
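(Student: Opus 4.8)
The plan is to deduce the central $L^2_\rho$ estimate \eqref{est:expo_s} by applying Proposition \ref{prop:class} \emph{twice}, and then to upgrade it to the pointwise statements $(i)$ and $(ii)$ by parabolic regularity and a change of variables. First I would fix $a \in S_\delta$ and let $\mathcal{B}(a)$ be the matrix supplied by Corollary \ref{coro:class}, adopting the convention $\mathcal{B}(a) = 0$ in Case 2 of that corollary. Feeding $\mathcal{A} = \mathcal{B}(a)$ into Proposition \ref{prop:cons} produces a solution $w_{\mathcal{B}(a)}$ of \eqref{equ:WaI} whose difference with $\hat w$ has leading term $\tfrac{1}{s^2}(\tfrac12 \bar y^T \mathcal{B}(a)\bar y - \mathrm{tr}\,\mathcal{B}(a))$. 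Since \eqref{equ:WaI} is invariant under orthogonal changes of the space variable, $y \mapsto W_a(Q_a y, s)$ is again a solution, and by \eqref{est:Wa} it carries the same leading quadratic correction to $\hat w$. Subtracting \eqref{est:whatwA} from \eqref{equ:case1class}, the two $s^{-2}$ leading terms cancel exactly, so that $W_a(Q_a y, s) - w_{\mathcal{B}(a)}(\bar y, s) = o(s^{-2})$ in $L^2_\rho$.

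The key observation is that both $W_a(Q_a\cdot,\cdot)$ and $w_{\mathcal{B}(a)}$ satisfy the profile hypothesis \eqref{hyp:W1W2} (the former by \eqref{est:Wa}, the latter by combining \eqref{est:whatwA} with \eqref{est:what}), so Proposition \ref{prop:class} applies to this pair as well. The $o(s^{-2})$ decay just obtained is incompatible with Case 1, which would force a nonzero $s^{-2}$ term; hence Case 2 must hold, giving exactly \eqref{est:expo_s} for that fixed $a$. When $\mathcal{B}(a) = 0$ the same conclusion holds after noting $\|w_0 - \hat w\|_{L^2_\rho} = \mathcal{O}(e^{-s/2} s^{C_0})$, once more by Proposition \ref{prop:class}.

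The hard part will be the uniformity in $a$: the double-application argument is pointwise in $a$, so a priori both the constant $C$ and the threshold $s_0$ could depend on $a$. To secure a single $C$ and $s_0$ valid on all of $S_\delta$, I would use that $C_0$ is intrinsic to the spectrum of the operator linearized at the profile (hence independent of $a$), that $S_\delta$ is relatively compact, and that $a \mapsto W_a$ is continuous with \eqref{est:Wa} holding uniformly (Proposition 3.1 of \cite{ZAAaihp02}); together with the continuous, hence uniformly bounded, dependence of $\mathcal{B}(a)$ and of the construction $w_{\mathcal{B}(a)}$ on $a$, this should promote the pointwise-in-$a$ bounds to a uniform one. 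The continuity of $a \mapsto \mathcal{B}(a)$ itself comes from identifying $\mathcal{B}(a)$ as the $s^{-2}$-coefficient of the projection of $W_a(Q_a \cdot) - \hat w$ onto the quadratic eigenspace, combined with the continuity of $W_a$.

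Finally, for $(i)$ I would set $v = W_a(Q_a\cdot,\cdot) - w_{\mathcal{B}(a)}$, which solves the parabolic equation obtained by linearizing \eqref{equ:WaI} around these two bounded solutions. A standard $L^2_\rho \to L^\infty$ smoothing estimate for the associated semigroup (of the type used by Herrero--Vel\'azquez and Merle--Zaag), applied on the region $|y| \le K\sqrt s$, converts \eqref{est:expo_s} into \eqref{est:extendys} at the cost of the polynomial factor $s^{3/2}$; undoing the rotation $y \mapsto Q_a^T y$, under which $\bar y \mapsto \bar y_a$ while $L^2_\rho$ is preserved, gives the stated form. Statement $(ii)$ is then a direct translation back to the original variables through \eqref{def:WaI}: with $s = -\log(T-t)$ one has $e^{-s/2} = (T-t)^{1/2}$ and $s = |\log(T-t)|$, the ball $|y| \le K\sqrt s$ becomes $|x - a| \le K\sqrt{(T-t)|\log(T-t)|}$, and $\bar y_a$ becomes $\bar y_{a,x}$, so multiplying \eqref{est:extendys} by the scaling factor $(T-t)^{1/(p-1)}$ yields \eqref{est:beyLogT_t}.
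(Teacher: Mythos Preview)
Your proof is correct and follows essentially the same route as the paper: you apply the classification (Proposition \ref{prop:class}) once to identify $\mathcal{B}(a)$, then a second time to the pair $\big(W_a(Q_a\cdot,\cdot),\,w_{\mathcal{B}(a)}\big)$ to force Case 2 and obtain \eqref{est:expo_s}, and you upgrade to \eqref{est:extendys} via a parabolic $L^2_\rho\!\to\!L^\infty$ extension, with \eqref{est:beyLogT_t} following by change of variables. The only notable difference is that for part $(i)$ the paper is more explicit: it writes the linear equation satisfied by $G=W_a(Q_a\cdot,\cdot)-w_{\mathcal{B}(a)}$, bounds the potential $\alpha$ by $C/s$ via the Merle--Zaag $L^\infty$ estimate, applies Kato's inequality so that $Z=|G|$ satisfies a sub-solution inequality, and then invokes a precise Vel\'azquez-type extension lemma (their Lemma \ref{lemm:extVel}) with the choice $e^{(s-s')/2}=\sqrt{s}$ to produce the $s^{3/2}$ loss; your ``standard semigroup smoothing'' remark points to the same mechanism but you should be aware that the one-sided bound on $\alpha$ and Kato's inequality are the concrete ingredients making it work.
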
 
\begin{proof}  From \eqref{equ:case1class} and \eqref{est:whatwA}, we have for any $\ell \times \ell$ symmetric matrix $\mathcal{A}$,
$$W_{a}(Q_ay,s) - w_{\mathcal{A}}(\bar{y},s) = \frac{1}{s^2} \left(\frac{1}{2}\bar{y}^T(\mathcal{B} - \mathcal{A})\bar{y} - tr (\mathcal{B} - \mathcal{A}) \right) + o\left(\frac{1}{s^2}\right) \quad \text{in} \;\; L^2_\rho.$$
Choosing $\mathcal{A} = \mathcal{B}(a)$, we get
\begin{equation}\label{est:tmpWawA}
\left\|W_{a}(Q_ay,s) - w_{\mathcal{B}(a)}(\bar{y},s)\right\|_{L^2_\rho} = o\left(\frac{1}{s^2}\right), \quad \text{as} \; s \to+\infty.
\end{equation}
Note that an alternative application of Proposition \ref{prop:class} with $W_1 = W_a$ and $W_2 = w_{\mathcal{B}(a)}$ yields either \eqref{equ:case1class} or \eqref{equ:case2class}. However, the case \eqref{equ:case1class} is excluded by \eqref{est:tmpWawA}. Hence, \eqref{est:expo_s} follows. Since we showed in Corollary \ref{coro:class} that $a \mapsto \mathcal{B}(a)$ is continuous, the same holds for $a \mapsto \mathcal{A}(a)$. 

As for \eqref{est:extendys}, it is a direct consequence of the following lemma which allows us to carry estimate \eqref{est:expo_s} from compact sets $|y| \leq K$ to sets $|y| \leq K\sqrt{s}$:
\begin{lemma}[Extension of the convergence from compact sets to sets $|y| \leq K\sqrt{s}$] \label{lemm:extVel} Assume that $Z$ satisfies 
\begin{equation}\label{eq:Zvel}
\partial_s Z \leq \Delta Z - \frac{1}{2}y\cdot \nabla Z + Z + \frac{C_1}{s}Z,  \quad 0 \leq Z(y,s) \leq C_1, \quad \forall (y,s) \in \RN \times [\hat{s},+\infty),
\end{equation}
for some $C_1 > 0$. Then for all $s' \geq \hat{s}$ and $s \geq s' + 1$ such that $e^{\frac{s - s'}{2}} = \sqrt{s}$, we have
$$\sup_{|y| \leq K\sqrt{s}}Z(y,s) \leq C(C_1, K)e^{s-s'}\|Z(s')\|_{L^2_\rho}.$$
\end{lemma}
\begin{proof} This lemma is a corollary of Proposition 2.1 in Vel\'azquez \cite{VELcpde92} and it is proved in the course of the proof of Proposition 2.13 in \cite{FZnon00} (in particular, pp. 1203-1205).
\end{proof}

Let us derive \eqref{est:extendys} from Lemma \ref{lemm:extVel}. If we define $G(y,s) = W_a(Q_ay,s) - w_{\mathcal{B}(a)}(\bar{y},s)$, straightforward calculations based on \eqref{equ:WaI} yield 
\begin{equation}\label{eq:GWawB}
\partial_s G = \Delta G - \frac{1}{2}y\cdot \nabla G + G + \alpha G, \quad \forall (y,s) \in \mathbb{R}^N \times [-\log T,+\infty),
\end{equation}
where 
\begin{equation*}
\alpha(y,s) = \frac{|W_a|^{p-1}W_a - |w_{\mathcal{B}}|^{p-1} w_{\mathcal{B}}}{W_a - w_{\mathcal{B}}} - \frac{p}{p-1} = p|\tilde w(y,s)|^{p-1} - \frac{p}{p-1} \quad \text{if}\;\; W_a \ne w_{\mathcal{B}},
\end{equation*}
for some $\tilde w(y,s) \in \left(W_a(Q_ay,s), w_{\mathcal{B}(a)}(\bar y,s)\right)$. 

From Merle and Zaag \cite{MZgfa98} (Theorem 1), we know that for $s$ large enough,
$$ \|\tilde w(s)\|_{L^\infty} \leq \kappa + \frac{C}{s},$$
which follows
\begin{equation}\label{eq:alWawB}
\alpha(y,s) \leq p \left( \kappa + \frac{C}{s}\right)^{p-1} -\frac{p}{p-1} \leq \frac{C_1}{s}.
\end{equation}
If $Z = |G|$, then we use the Kato's inequality $\Delta G\cdot \text{sgn}(G) \leq \Delta (|G|)$ to derive equation \eqref{eq:Zvel} from \eqref{eq:GWawB} and \eqref{eq:alWawB}. Applying Lemma \ref{lemm:extVel} together with estimate \eqref{est:expo_s} yields for all $s' \geq s_1$ and $s \geq s' + 1$ for some $s_1 > 0$ large such that $e^{\frac{s - s'}{2}} = \sqrt{s}$,
$$\sup_{|y| \leq K\sqrt{s}}Z(y,s) \leq Ce^{s-s'}e^{-\frac{s'}{2}}(s')^{C_0} \leq C e^{-\frac{s}{2}}s^{\frac{3}{2} + C_0},$$
which yields \eqref{est:extendys}. The estimate \eqref{est:beyLogT_t} directly follows from \eqref{est:extendys} by the transformation \eqref{def:WaI}. This ends the proof of Proposition \ref{prop:expodecay}.
\end{proof}

\subsection{Part 2: A geometric constraint linked to the asymptotic behaviors.}
In this subsection, we follow the idea of \cite{ZAAdm06} to introduce local $\mathcal{C}^{1, \alpha^*}$-charts of the blow-up set, and get a geometric constraint mechanism on the blow-up set (see Proposition \ref{prop:geocon} below) which is a crucial step in linking refined asymptotic behaviors of the solution to geometric descriptions of the blow-up set. 

Consider $a \in S_\delta$ and $\ell \in \{1, \cdots, N-1\}$, we introduce the local $\mathcal{C}^{1, \alpha^*}$-chart of the blow-up set at the point $a$ as follows:
\begin{align*}
\mathbb{R}^{N - \ell}\;\; &\to \quad \quad\RN\\
\tilde{\xi}\;\;\;\;\; &\mapsto \;\;(\gamma_{a,1}(\tilde{\xi}), \cdots, \gamma_{a,\ell}(\tilde{\xi}), \tilde{\xi}),
\end{align*}
where $\tilde{\xi} = (\xi_{\ell+1}, \cdots, \xi_N)$ and  $\gamma_{a,i} \in \mathcal{C}^{1, \alpha^*}((-\epsilon_a, \epsilon_a)^{N - \ell})$ for some $\alpha^* \in \left(0,\frac 12\right)$ and $\epsilon_a > 0$, then the set $S_\delta$ is locally near $a$ defined  by
\begin{equation}\label{def:localChart}
\left\{a + \sum_{i = 1}^\ell \gamma_{a,i}(\tilde{\xi})\eta_i(a) + \sum_{j = \ell + 1}^N \xi_k\tau_k(a)\; \big \vert\; |\tilde{\xi}| < \epsilon_a\right\},
\end{equation}
where $\eta_1(a), \cdots, \eta_\ell(a)$ and $\tau_{\ell + 1}(a), \cdots, \tau_N(a)$ are of norm 1, and respectively, normal and tangent to $S_\delta$ at $a$. By definition, we have 
\begin{equation*}
\gamma_{a,i}(0) = 0 \quad \text{and} \quad \nabla\gamma_{a,i}(0) = 0, \;\forall i = 1,\cdots,\ell. 
\end{equation*}
Let $Q_a$ be the orthogonal matrix whose columns are $\eta_i(a)$ and $\tau_j(a)$, namely that
\begin{equation}\label{def:nitk}
\eta_i(a) = Q_ae_i \quad \text{and} \quad \tau_j(a) = Q_ae_j,
\end{equation}
and define 
\begin{equation}\label{def:wa}
w_a(y,s) = (T-t)^{\frac{1}{p-1}}u(x,t), \quad y = Q_a^T\left(\frac{x - a}{\sqrt{T-t}}\right), \quad s = -\log (T-t),
\end{equation}
then we see from \eqref{def:WaI} that $w_a$ satisfies \eqref{equ:WaI} and 
\begin{equation}\label{def:wabyWa}
w_a(y,s) = W_a(Q_ay,s), \quad \forall (y,s) \in \RN \times [-\log T, +\infty).
\end{equation}
Note from \eqref{def:nitk} that the point $(y,s)$ in the domain of $w_a$ becomes the point $(x,t)$ in the domain of $u$, where
\begin{equation*}
x = a + e^{-\frac s2}Q_ay = a + e^{-\frac s2}\left(\sum_{i = 1}^\ell y_i \eta_i(a) + \sum_{j = \ell + 1}^N y_j\tau_j(a)\right), \quad t = T - e^{-s}.
\end{equation*}
Now, fix $a \in S_\delta$ and consider an arbitrary $b \in S_\delta$. From \eqref{def:wa}, we have
\begin{equation}\label{equ:wa_wb}
w_a(y,s) = w_b(Y,s), \quad \text{where}\;\;\; Y = Q_b^T\left(Q_ay + e^{\frac s2}(a-b)\right).
\end{equation}
If we differentiate \eqref{equ:wa_wb} with respect to $y_k$ with $k \in \{\ell + 1, \cdots, N\}$, we get
\begin{align}
&(T-t)^{\frac{1}{p-1} + \frac{1}{2}}\frac{\partial u}{\partial \tau_k(a)}(x,t)\nonumber\\
&= \frac{\partial w_a}{\partial y_k}(y,s) = \sum_{i = 1}^\ell \tau_k(a)\cdot \eta_i(b)\frac{\partial w_b}{\partial y_i}(Y,s) + \sum_{j = \ell +1}^N\tau_k(a)\cdot\tau_j(b)\frac{\partial w_b}{\partial y_j}(Y,s).\label{equ:diffwa}
\end{align}
If we fix $b$ as the projection of $x = a + e^{-\frac s2}Q_ay$ on the blow-up set in the orthogonal direction to the tangent space to the blow-up set at $a$, then $b$ has the same components on the tangent space spanned by $\{\tau_{\ell + 1}(a), \cdots, \tau_N(a)\}$ as $x$. In particular, 
\begin{equation}\label{def:point_b}
b = b(a,y,s) = a + \sum_{i = 1}^\ell \gamma_{a,i}(e^{-\frac s2}\tilde{y})\eta_i(a) + \sum_{j = \ell + 1}^N e^{-\frac s2}y_j \tau_j(a), \quad \tilde{y} = (y_{\ell + 1}, \cdots, y_N).
\end{equation}
The following proposition gives a geometric constraint on the expansion of $w_a$, which is the bridge linking the refined asymptotic behavior to the refined regularity of the blow-up set.

\begin{proposition}[A geometric constraint on the expansion of $w_a$] \label{prop:geocon} Assume that 
$$\gamma_a \in \mathcal{C}^{1,\alpha^*}((-\epsilon_a, \epsilon_a)^{N -\ell}, \mathbb{R}^\ell) \quad \text{for some}\;\; \alpha^* \in \left(0, \frac 12\right)\;\; \text{and}\;\; \epsilon_a >0.$$
Then, there exists $s_1 \geq \max\{-\log T,s_0\}$ ($s_0$ is introduced in Proposition \ref{prop:expodecay}) such that for all $a \in S_\delta$, $|y| \leq 1$, $s \geq s_1$ and $k = \ell+1, \cdots, N$, it holds that
\begin{align}
&\left|\frac{\partial w_a}{\partial y_k}(y,s) - \left\{\frac{\partial w_b}{\partial y_k}(\bar{y}, 0, \cdots, 0, s) + \frac{\kappa}{2ps} \sum_{i = 1}^\ell \frac{\partial \gamma_{a,i}}{\partial \xi_k}(e^{-\frac s2} \tilde{y})y_i \right\}\right|\nonumber\\
& \leq C \sum_{i = 1}^\ell\left|\frac{\partial \gamma_{a,i}}{\partial \xi_k}(e^{-\frac s2} \tilde{y})\right| \left[|\bar{y}|\frac{\log s}{s^2} + \frac 1s e^{-\frac{\alpha^*s}{2}} + e^{-\frac s2}s^{C_0}\right] + C e^{-\frac{(1 + \alpha^*)s}{2}}s^{C_0},\label{est:geoconst}
\end{align}
where $\bar{y} = (y_1, \cdots, y_\ell)$, $\tilde{y} = (y_{\ell + 1}, \cdots, y_N)$ and $b$ is defined by \eqref{def:point_b}.
\end{proposition}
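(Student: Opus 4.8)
The plan is to turn the exact differentiated identity \eqref{equ:diffwa} into the claimed expansion by combining two essentially independent ingredients: a purely geometric computation of the argument $Y$ and of the frame inner products $\tau_k(a)\cdot\eta_i(b)$, $\tau_k(a)\cdot\tau_j(b)$ at the auxiliary point $b=b(a,y,s)$ fixed in \eqref{def:point_b}; and a transfer of the $L^2_\rho$-asymptotics \eqref{est:Wa}, \eqref{est:what}, \eqref{est:whatwA}, \eqref{est:expo_s} into pointwise bounds for the first (and a few second) derivatives of $w_b$ on $\{|y|\le 2\}$, via interior parabolic regularity applied to \eqref{equ:WaI} together with the uniform bound $\|w_b(s)\|_{L^\infty}\le \kappa+C/s$ of Merle--Zaag. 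Throughout, $k,j\in\{\ell+1,\dots,N\}$ are tangential and $i,m\in\{1,\dots,\ell\}$ are normal indices.

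First I would make the geometry explicit. Substituting \eqref{def:point_b} into the formula for $Y$ in \eqref{equ:wa_wb} and using \eqref{def:nitk}, the tangential contributions of $Q_ay$ and $e^{\frac s2}(a-b)$ cancel exactly, so that
\begin{equation*}
Q_ay+e^{\frac s2}(a-b)=\sum_{i=1}^{\ell} z_i\,\eta_i(a),\qquad z_i=y_i-e^{\frac s2}\gamma_{a,i}(e^{-\frac s2}\tilde y),\qquad Y=Q_b^{T}\Big(\sum_{i=1}^{\ell} z_i\,\eta_i(a)\Big).
\end{equation*}
Since $\gamma_{a,i}(0)=0$, $\nabla\gamma_{a,i}(0)=0$ and $\gamma_a\in\mathcal{C}^{1,\alpha^*}$, for $|y|\le 1$ we have $|\gamma_{a,i}(e^{-\frac s2}\tilde y)|\le Ce^{-\frac{(1+\alpha^*)s}{2}}$ and $\big|\frac{\partial\gamma_{a,i}}{\partial\xi_k}(e^{-\frac s2}\tilde y)\big|\le Ce^{-\frac{\alpha^*s}{2}}$; in particular $|a-b|\le Ce^{-\frac s2}$ and $z_i=y_i+O(e^{-\frac{\alpha^*s}{2}})$. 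The same $\mathcal{C}^{1,\alpha^*}$-control shows the normals and tangents at $b$ differ from those at $a$ by $O(e^{-\frac{\alpha^*s}{2}})$; using that $\eta_i(b)$ is orthogonal to every tangent vector $\tau_k(a)+\sum_m\frac{\partial\gamma_{a,m}}{\partial\xi_k}(e^{-\frac s2}\tilde y)\eta_m(a)$ of $S$ at $b$, and that $\eta_m(a)\cdot\eta_i(b)=\delta_{mi}+O(e^{-\frac{\alpha^*s}{2}})$, I would derive
\begin{equation*}
\tau_k(a)\cdot\eta_i(b)=-\frac{\partial\gamma_{a,i}}{\partial\xi_k}(e^{-\tfrac s2}\tilde y)+r_{ki},\qquad \tau_k(a)\cdot\tau_j(b)=\delta_{kj}+O(e^{-\tfrac{\alpha^*s}{2}}),\qquad |r_{ki}|\le Ce^{-\tfrac{\alpha^*s}{2}}\sum_{m}\Big|\frac{\partial\gamma_{a,m}}{\partial\xi_k}\Big|,
\end{equation*}
together with $Y_i=y_i+O(e^{-\frac{\alpha^*s}{2}})$ for $i\le\ell$ and $Y_j=\sum_i z_i\,(\tau_j(b)\cdot\eta_i(a))=O(e^{-\frac{\alpha^*s}{2}})$ for $j>\ell$, i.e. $Y=(\bar y,0,\dots,0)+O(e^{-\frac{\alpha^*s}{2}})$.

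Second I would record the derivative estimates for $w_b$. From $w_b=w_{\mathcal{B}(b)}+O(e^{-\frac s2}s^{C_0})$ in $L^2_\rho$ (\eqref{est:expo_s} via \eqref{def:wabyWa}), the fact that $w_{\mathcal{B}(b)}$ is independent of the tangential variables, and parabolic regularity for the equation satisfied by the difference, the tangential derivatives obey $\big|\frac{\partial w_b}{\partial y_k}(y,s)\big|+\big|\nabla\frac{\partial w_b}{\partial y_k}(y,s)\big|\le Ce^{-\frac s2}s^{C_0}$ for $k>\ell$. For the normal derivatives, writing $w_b=P+(w_{\mathcal{B}(b)}-P)+(w_b-w_{\mathcal{B}(b)})$ with $P=\kappa+\frac{\kappa}{2ps}(\ell-\tfrac{|\bar y|^2}{2})$, a second-order parabolic estimate applied to $h:=w_{\mathcal{B}(b)}-P$ (which satisfies $\|h\|_{L^2_\rho}=O(\tfrac{\log s}{s^2})$ by \eqref{est:what}, \eqref{est:whatwA}, and is \emph{even} under $\bar y\mapsto-\bar y$, so $\partial_{y_i}h$ vanishes at $\bar y=0$) gives the key weighted bound $|\partial_{y_i}h(\bar y,s)|\le C|\bar y|\tfrac{\log s}{s^2}$, whence
\begin{equation*}
\frac{\partial w_b}{\partial y_i}(y,s)=-\frac{\kappa y_i}{2ps}+O\Big(|\bar y|\tfrac{\log s}{s^2}\Big)+O\big(e^{-\tfrac s2}s^{C_0}\big),\qquad i\le\ell .
\end{equation*}
Substituting all of this into \eqref{equ:diffwa}: the tangential sum $\sum_j(\tau_k(a)\cdot\tau_j(b))\frac{\partial w_b}{\partial y_j}(Y,s)$ produces the main piece $\frac{\partial w_b}{\partial y_k}(Y,s)$, whose replacement by $\frac{\partial w_b}{\partial y_k}(\bar y,0,\dots,0,s)$ costs $\big|\nabla\frac{\partial w_b}{\partial y_k}\big|\,|Y-(\bar y,0)|=O(e^{-\frac{(1+\alpha^*)s}{2}}s^{C_0})$, the off-diagonal corrections $O(e^{-\frac{\alpha^*s}{2}})\cdot O(e^{-\frac s2}s^{C_0})$ giving the same order — this is the standalone term of \eqref{est:geoconst}. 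The normal sum $\sum_i(\tau_k(a)\cdot\eta_i(b))\frac{\partial w_b}{\partial y_i}(Y,s)$, together with the two displayed identities, yields exactly $\frac{\kappa}{2ps}\sum_i\frac{\partial\gamma_{a,i}}{\partial\xi_k}(e^{-\frac s2}\tilde y)\,y_i$ up to errors bounded by $C\sum_i\big|\frac{\partial\gamma_{a,i}}{\partial\xi_k}\big|\big[|\bar y|\tfrac{\log s}{s^2}+\tfrac1s e^{-\frac{\alpha^*s}{2}}+e^{-\frac s2}s^{C_0}\big]$, the three contributions coming respectively from the profile remainder in $\partial_{y_i}w_b$, from $Y_i-y_i=O(e^{-\frac{\alpha^*s}{2}})$ inside $-\frac{\kappa Y_i}{2ps}$, and from $\partial_{y_i}(w_b-w_{\mathcal{B}(b)})$; the factor $r_{ki}$ contributes only higher order. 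Collecting the pieces gives \eqref{est:geoconst}.

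The main obstacle is twofold. The conceptual crux is the geometric bookkeeping of the moving frames — establishing $\tau_k(a)\cdot\eta_i(b)=-\frac{\partial\gamma_{a,i}}{\partial\xi_k}(e^{-\frac s2}\tilde y)+$\,(h.o.t.) and the near-normal form $Y=(\bar y,0,\dots,0)+O(e^{-\frac{\alpha^*s}{2}})$ — since only $\mathcal{C}^{1,\alpha^*}$ regularity of $\gamma_a$ is at hand and the Hölder exponent $\alpha^*$ governs every $e^{-\frac{\alpha^*s}{2}}$ loss. The technical crux is the regularity transfer: upgrading the $L^2_\rho$ statements to pointwise derivative bounds with the sharp orders, in particular isolating the $|\bar y|$ weight in the normal derivative (which needs the parity structure of $w_{\mathcal{B}(b)}-P$, not merely its size) and securing the $e^{-\frac s2}s^{C_0}$ size of the tangential first and second derivatives of $w_b$.
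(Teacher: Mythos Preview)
Your proposal is correct and follows essentially the same approach as the paper's proof in Section~\ref{sec:ap1}: the geometric computation of $\tau_k(a)\cdot\eta_i(b)$, $\tau_k(a)\cdot\tau_j(b)$ and of $Y$ from the $\mathcal{C}^{1,\alpha^*}$ hypothesis, the parabolic-regularity transfer of the $L^2_\rho$ asymptotics \eqref{est:expo_s} and \eqref{est:Wa} to $W^{2,\infty}_{loc}$ bounds, and the Taylor expansion of $\partial_{y_i}w_{\mathcal{B}(b)}$ around $\bar y=0$ to produce the weighted error $|\bar y|\tfrac{\log s}{s^2}$. The only divergence is that you justify the crucial vanishing $\partial_{y_i}w_{\mathcal{B}(b)}(0,s)=0$ through a parity claim on $w_{\mathcal{B}(b)}$, whereas the paper simply asserts it as a property of the construction in \cite{NZens16}; both arrive at the same estimate \eqref{est:parwbyi}.
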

\begin{proof} Note that the proof of Proposition \ref{prop:geocon} was given in \cite{ZAAdm06} only when $\ell = 1$. Of course, that proof naturally extends to the case when $\ell \in \{2, \cdots, N-1\}$. Since our paper is relevant only when $\ell \geq 2$ and Proposition \ref{prop:geocon} presents an essential link between the asymptotic behavior of the solution and a geometric constraint of the blow up set, we felt we should give the proof of this proposition for the completeness and for the reader's convenience. As said earlier, this section just gives the main steps of the proof of Theorem \ref{theo:1}, and because the proof is long and technical, we leave it to Section \ref{sec:ap1}.
\end{proof}

\subsection{Part 3: Refined regularity of the blow-up set and conclusion of Theorem \ref{theo:1}.}
In this subsection, we give the proof of the $\mathcal{C}^2$-regularity of the blow-up set (Theorems \ref{theo:1} and \ref{theo:2}). We proceed in 2 steps:
\begin{itemize}
\item Step 1: We derive from Proposition \ref{prop:expodecay} that $\gamma_a$ is $\mathcal{C}^{1, \frac 12 - \eta}$ for all $\eta > 0$. Then we apply Proposition \ref{prop:geocon} with $\alpha^* = \alpha \in (0, \frac 12)$ to improve the regularity of $\gamma_a$ which reaches to $\mathcal{C}^{1, 1 - \eta}$ for all $\eta > 0$.
\item Step 2: Using the $\mathcal{C}^{1, 1-\eta}$-regularity and the geometric constraint in Proposition \ref{prop:geocon}, we refine the asymptotic behavior given in Proposition \ref{prop:expodecay}, which involves terms of order $\frac{1}{s}e^{-\frac s2}$. Exploiting this refined asymptotic behavior together with the geometric constraint \eqref{est:geoconst}, we derive that $\gamma_a$ is of class  $\mathcal{C}^2$, which is the conclusion of Theorem \ref{theo:1}. From the information obtained on the $\mathcal{C}^2$-regularity, we calculate the second fundamental form of the blow-up set, which concludes the proof of Theorem \ref{theo:2}.
\end{itemize}

\subsubsection*{Step 1: Deriving $\mathcal{C}^{1,1 - \eta}$-regularity of the blow-up set.}
We first derive the $\mathcal{C}^{1,\frac{1}{2} - \eta}$-regularity of the blow-up set for all $\eta > 0$ from Proposition \ref{prop:expodecay}. Then we apply Proposition \ref{prop:geocon} with $\alpha^* = \alpha \in (0,\frac 12)$ to get $\mathcal{C}^{1, 1 - \eta}$-regularity for all $\eta > 0$. In particular, we claim the following:

\begin{proposition}[$C^{1,\frac{1}{2} - \eta}$-regularity for $S$] \label{prop:C1alph} Under the hypotheses of Theorem \ref{theo:1}, $S$ is the graph of a vector function $\gamma \in \mathcal{C}^{1, \frac{1}{2} - \eta}((-\delta_1, \delta_1)^{N-\ell}, \mathbb{R}^\ell)$ for any $\eta > 0$, locally near $\hat{a}$. More precisely, there is an $h_0 > 0$ such that for all $|\tilde{\xi}| < \delta_1$ and $|\tilde{h}| < h_0$ such that $|\tilde{\xi} + \tilde{h}| < \delta_1$, we have for all $i \in \{1, \cdots, \ell\}$,
\begin{equation}\label{est:propC12al}
|\gamma_{i}(\tilde{\xi} + \tilde{h}) - \gamma_{i}(\tilde{\xi}) - \tilde{h}\cdot \nabla \gamma_{i}(\tilde{\xi})| \leq C|\tilde{h}|^{\frac 32}|\log |\tilde{h}||^{\frac 12 + \frac{C_0}{2}}.
\end{equation}
\end{proposition}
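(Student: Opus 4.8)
The plan is to extract the regularity of $\gamma$ directly from the sharp profile estimate \eqref{est:beyLogT_t} in Proposition \ref{prop:expodecay}, by comparing the values of the solution $u$ near two nearby blow-up points and using the fact that both are described, up to a polynomially small error in $(T-t)$, by translates of the same non-explicit profile $w_{\mathcal{B}(a)}$. Concretely, I would fix $\tilde\xi$ and a small increment $\tilde h$, set $a = (\gamma(\tilde\xi),\tilde\xi)$ and $b = (\gamma(\tilde\xi+\tilde h),\tilde\xi+\tilde h)$, both points of $S_\delta$, and pick a time $t$ tied to the separation $|a-b|$ through the natural blow-up scale. The correct choice is to take $(T-t)^{1/2}\sim|\tilde h|$ (equivalently $|a-b|\sim\sqrt{(T-t)}$), so that the two points sit at a bounded number of blow-up-scale units apart and the balls $B(a,K_0\sqrt{(T-t)|\log(T-t)|})$ and $B(b,\cdots)$ genuinely overlap; this is exactly the overlap mechanism announced in the introduction between estimates \eqref{est:Zaacmp02} and the geometric-constraint discussion.

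The key steps, in order, are as follows. First, apply \eqref{est:beyLogT_t} at the base point $a$ to get, for $x$ in the overlap region, that $(T-t)^{1/p-1}u(x,t)$ equals $w_{\mathcal{B}(a)}(\bar y_{a,x},-\log(T-t))$ up to an error of order $(T-t)^{1/2}|\log(T-t)|^{3/2+C_0}$. Second, since the profile $w_{\mathcal{B}(a)}(\bar y,s)$ agrees with $\hat w(\bar y,s)$ to leading order (see \eqref{est:whatwA}) and $\hat w$ is known to be, to leading order in $1/s$, the explicit profile $\kappa+\frac{\kappa}{2ps}(\ell-\tfrac{|\bar y|^2}{2})$, I would read off how the profile value at a given physical point $x$ depends on the distance of $x$ to the tangent plane $\pi_a$, i.e.\ on the normal coordinates $\bar y_{a,x}$. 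The crucial observation is that $w_{\mathcal{B}(a)}$ depends only on the $\ell$ normal directions $\bar y$, so the leading-order normal dependence near the vertex is quadratic, $\sim \kappa-\frac{\kappa}{4ps}|\bar y_{a,x}|^2$, which translates into a quadratic-in-$|x-a|$ decay governed precisely by $d(x,S)$. Third, I would impose consistency: evaluating the same quantity $u(x,t)$ once via the profile centered at $a$ and once via the profile centered at $b$ must agree up to the sum of the two error terms. Because the profile reads out the \emph{normal distance to the tangent plane}, the two readings differ by a term controlled by the mismatch between the linear approximation $\gamma_i(\tilde\xi)+\tilde h\cdot\nabla\gamma_i(\tilde\xi)$ and the true value $\gamma_i(\tilde\xi+\tilde h)$. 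Matching this geometric discrepancy against the profile error forces
\[
|\gamma_i(\tilde\xi+\tilde h)-\gamma_i(\tilde\xi)-\tilde h\cdot\nabla\gamma_i(\tilde\xi)|\lesssim (T-t)^{1/2}|\log(T-t)|^{1/2+C_0/2},
\]
and substituting the calibration $(T-t)^{1/2}\sim|\tilde h|$, $|\log(T-t)|\sim|\log|\tilde h||$ yields exactly the claimed bound \eqref{est:propC12al}, with the $3/2$ power coming from one factor of $|\tilde h|$ from the time scale times one factor of $|\tilde h|$ from differentiating the quadratic normal profile, together with the normalization of $\nabla\gamma_i$.

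The main obstacle, and the step requiring the most care, is the third one: making rigorous the passage from ``the two profiles must coincide in the overlap'' to a \emph{pointwise} second-order Taylor estimate on each component $\gamma_i$. Several technical points must be handled here. One must ensure the overlap region $B(a,K_0\sqrt{(T-t)|\log(T-t)|})\cap B(b,\cdots)$ is nonempty and large enough to sample the quadratic normal dependence of the profile in all $\ell$ normal directions; this is where the uniformity of \eqref{est:beyLogT_t} in $a$ (and the continuity of $Q_a$ and $\mathcal{B}(a)$ from Corollary \ref{coro:class}) is essential, so that the error constant $C(K)$ does not degenerate as $a\to\hat a$. One must also control the rotation between the frames $Q_a$ and $Q_b$: the normal coordinates $\bar y_{a,x}$ and $\bar y_{b,x}$ differ not only through the translation $a-b$ but through the tilt of the tangent plane, and one has to verify that this tilt contributes only lower-order terms (here the \emph{a priori} continuity of $a\mapsto Q_a$ suffices, since at this stage we only claim Hölder regularity of the \emph{gradient}, not yet of $Q_a$). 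Finally, because $\gamma$ is at this point only known to be $\mathcal{C}^1$, one cannot yet differentiate freely; the argument must be arranged as a finite-difference comparison at scale $|\tilde h|$ rather than an infinitesimal one, choosing the increment $\tilde h$ and the time $t$ jointly. Once these calibrations are pinned down, the estimate \eqref{est:propC12al} follows, and it is precisely the $\mathcal{C}^{1,\frac12-\eta}$ statement that feeds into Proposition \ref{prop:geocon} in the next step to bootstrap to $\mathcal{C}^{1,1-\eta}$.
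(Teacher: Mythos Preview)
Your plan is the paper's approach: overlap two copies of the sharp profile \eqref{est:beyLogT_t} centered at neighbouring blow-up points $a=\Gamma(\tilde\xi)$ and $b=\Gamma(\tilde\xi+\tilde h)$, read off the quadratic normal dependence of $w_{\mathcal B}$, and convert the mismatch into a bound on $d(b,\pi_a)$, hence on the first-order Taylor remainder of each $\gamma_i$. Two corrections are needed, though.

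First, the calibration. The paper takes the time $\tilde t$ so that $|a-b|=\sqrt{(T-\tilde t)|\log(T-\tilde t)|}$, not $|a-b|\sim\sqrt{T-t}$. With your choice the argument still yields $\mathcal C^{1,\frac12-\eta}$, but with logarithmic exponent $\tfrac54+\tfrac{C_0}{2}$ rather than the $\tfrac12+\tfrac{C_0}{2}$ announced in \eqref{est:propC12al}. Relatedly, your displayed intermediate bound is off by a power: since the quadratic term in the profile carries a factor $1/s$, the comparison gives $\frac{1}{s}|\bar y_{a,b}|^2\lesssim e^{-s/2}s^{3/2+C_0}$, i.e.\ $d^2\lesssim (T-t)^{3/2}|\log(T-t)|^{5/2+C_0}$, so the Taylor remainder is bounded by $(T-t)^{3/4}|\log(T-t)|^{5/4+C_0/2}$, not $(T-t)^{1/2}|\log(T-t)|^{1/2+C_0/2}$. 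With the paper's calibration this becomes exactly $|\tilde h|^{3/2}|\log|\tilde h||^{(1+C_0)/2}$.

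Second, a missing step. Matching $u(b,t)$ against both profiles yields $|w_{\mathcal B(a)}(\bar y_{a,b},s)-w_{\mathcal B(b)}(0,s)|\lesssim e^{-s/2}s^{3/2+C_0}$, but to isolate $|\bar y_{a,b}|^2$ via the quadratic lower bound for $w_{\mathcal B(a)}(0,s)-w_{\mathcal B(a)}(\bar y,s)$ you must first show that $|w_{\mathcal B(a)}(0,s)-w_{\mathcal B(b)}(0,s)|$ obeys the same error. The paper does this by a small symmetric trick: distinguish the sign of the difference and combine the one-sided inequality $w_{\mathcal B}(\bar y,s)\le w_{\mathcal B}(0,s)$ with the estimate obtained after swapping the roles of $a$ and $b$ (this is \eqref{est:ID3}). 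Without it, the continuity of $\mathcal B(\cdot)$ alone is not enough at this stage. Once you insert this step and fix the calibration, your outline becomes precisely the proof in Section~\ref{sec:C1alpha}.
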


\begin{proof} The proof is mainly based on the derivation of the sharp asymptotic profile given in Proposition \ref{prop:expodecay}. In fact, we exploit the estimate \eqref{est:beyLogT_t} to find out a geometric constraint on the blow-up set $S$, which implies some more regularity on $S$. Since the argument follows the same lines as in Section 4, \cite{ZAAcmp02} for the case $\ell = 1$, and no new ideas are needed for the case $\ell \geq 2$, we will just sketch the proof by underlying the most relevant aspects in Section \ref{sec:C1alpha} for the reader's sake.
\end{proof}

The next proposition shows the $\mathcal{C}^{1,1-\eta}$-regularity of the blow-up set.
\begin{proposition}[$C^{1,1 - \eta}$-regularity for $S_\delta$] \label{prop:C11eta} There exists $\xi_0 > 0$ such that for each $a \in S_\delta$, the local chart defined in \eqref{def:localChart} satisfies for all $k = \ell+1, \cdots, N$ and $|\tilde{\xi}|< \xi_0$, 
$$\sum_{i = 1}^{\ell}\left|\frac{\partial \gamma_{a,i}}{\partial \xi_k}(\tilde{\xi}) \right| \leq C|\tilde{\xi}||\log |\tilde{\xi}||^{1 + \mu} \quad \text{for some}\;\; \mu > 0.$$
\end{proposition}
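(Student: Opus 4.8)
The plan is to extract the improved regularity of $\gamma_a$ from the geometric constraint \eqref{est:geoconst} by choosing the self-similar time $s$ in a way that optimally balances the error terms against the distance $|\tilde\xi|$. The starting point is Proposition \ref{prop:C1alph}, which already gives $\gamma_a \in \mathcal{C}^{1,\alpha^*}$ for any $\alpha^* \in (0,\tfrac12)$; fixing such an $\alpha^*$, I apply Proposition \ref{prop:geocon} in its quantitative form \eqref{est:geoconst}. The key observation is that the left-hand side of \eqref{est:geoconst} contains the term $\frac{\kappa}{2ps}\sum_i \frac{\partial\gamma_{a,i}}{\partial\xi_k}(e^{-s/2}\tilde y)\,y_i$, whose coefficient $\frac{\partial\gamma_{a,i}}{\partial\xi_k}$ is exactly the quantity I want to estimate, evaluated at the point $\tilde\xi = e^{-s/2}\tilde y$. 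Thus if I fix a target point $\tilde\xi$ with $|\tilde\xi|$ small and choose $s$ and $\tilde y$ so that $e^{-s/2}\tilde y = \tilde\xi$ (with $|\tilde y|\le 1$, forcing $e^{-s/2}\sim|\tilde\xi|$, i.e. $s\sim 2|\log|\tilde\xi||$), then \eqref{est:geoconst} becomes an estimate on $\sum_i|\frac{\partial\gamma_{a,i}}{\partial\xi_k}(\tilde\xi)|$ itself.

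The main steps, in order, are as follows. First, I establish the crude relation coming from the derivative structure: by differentiating \eqref{equ:wa_wb} and using that $b = b(a,y,s)$ defined in \eqref{def:point_b} lies on $S$, I isolate $\frac{\partial w_b}{\partial y_k}$ at a tangential point, which vanishes to high order because $w_b$ behaves like the radial profile $\hat w(\bar y,s)$ (and hence its tangential derivatives $\partial_{y_k}$, $k\ge\ell+1$, are negligible). Second, I substitute this information back into \eqref{est:geoconst}, so that the left-hand side is essentially $\frac{\kappa}{2ps}\sum_i|\frac{\partial\gamma_{a,i}}{\partial\xi_k}(e^{-s/2}\tilde y)|\,|y_i|$ controlled by the right-hand side error terms. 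Third, I make the calibration $s = 2|\log|\tilde\xi|| + O(1)$, so that $e^{-s/2}\sim|\tilde\xi|$, and I multiply through by $s$ to clear the $\frac{1}{s}$ prefactor. Under this choice the dominant error on the right of \eqref{est:geoconst}, namely $C\sum_i|\frac{\partial\gamma_{a,i}}{\partial\xi_k}|\cdot\frac{\log s}{s}\cdot|\bar y|$ together with the exponentially small remainders $e^{-s/2}s^{C_0}\sim|\tilde\xi|\,|\log|\tilde\xi||^{C_0}$, yields after absorbing the self-referential term
$$
\sum_{i=1}^\ell\left|\frac{\partial\gamma_{a,i}}{\partial\xi_k}(\tilde\xi)\right| \le C|\tilde\xi|\,|\log|\tilde\xi||^{1+\mu}
$$
for a suitable $\mu>0$ depending on $C_0$.

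The delicate point, and the step I expect to be the main obstacle, is the \emph{self-referential absorption}: the coefficient $\sum_i|\frac{\partial\gamma_{a,i}}{\partial\xi_k}|$ appears on \emph{both} sides of \eqref{est:geoconst}, multiplied on the right by the bracket $[\,|\bar y|\frac{\log s}{s^2} + \frac1s e^{-\alpha^* s/2} + e^{-s/2}s^{C_0}\,]$. To close the estimate I must verify that, after multiplying by $s$ and choosing $s\sim 2|\log|\tilde\xi||$, this bracket is strictly smaller than the coefficient $\frac{\kappa}{2p}$ on the left (times $|y_i|$ for a suitably chosen $y$ with $|\bar y|$ bounded below), so that the term can be moved to the left-hand side with a positive remaining coefficient. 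This requires $\frac{\log s}{s}\to 0$, which holds, but forces careful bookkeeping of the constants and of the freedom in choosing $\bar y$ (one picks a direction $y_i$ of order $1$ while keeping $\tilde y$ pinned to the target). A secondary technical care is the matching of the point $\tilde\xi=e^{-s/2}\tilde y$ with $|\tilde y|\le 1$ as required by the hypothesis $|y|\le 1$ of Proposition \ref{prop:geocon}: one covers a neighborhood of $\tilde\xi$ by such admissible $(\tilde y,s)$ pairs and uses the continuity of $a\mapsto\gamma_a$, together with the uniformity in $a\in S_\delta$ afforded by the construction, to obtain the estimate uniformly near $\hat a$. Once the absorption is done and $s$ is calibrated, the stated bound follows directly, with $\xi_0>0$ chosen small enough that $s_1\le 2|\log|\tilde\xi||$ throughout.
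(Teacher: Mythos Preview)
Your proposal is correct and follows essentially the same route as the paper: apply the geometric constraint \eqref{est:geoconst} with $\bar y = e_{i_*}$ (your ``$|\bar y|$ bounded below'') and $\tilde y$ on the unit sphere, bound both tangential derivatives $\partial_{y_k}w_a$ and $\partial_{y_k}w_b$ by $Ce^{-s/2}s^\mu$, absorb the self-referential term since $\frac{\log s}{s}\to 0$, and read off the result via $|\tilde\xi|=e^{-s/2}$. One small correction: the exponential smallness of $\partial_{y_k}w_b$ for $k\ge\ell+1$ does not come from the comparison with $\hat w$ (which via \eqref{est:Wa} would only give $O(\frac{\log s}{s^2})$), but from Proposition~\ref{prop:expodecay} plus parabolic regularity, using that $w_{\mathcal{B}(a)}$ is independent of $y_{\ell+1},\dots,y_N$; this is the origin of the $e^{-s/2}s^{C_0}$ remainder you correctly identify.
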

\begin{proof} Note that the case $\ell = 1$ was already proven in \cite{ZAAdm06} (see Lemma 3.4, page 516). Here we use again the argument of \cite{ZAAdm06} for the case $\ell \geq 2$. Using the estimate given in Proposition \ref{prop:expodecay} and parabolic regularity, we see that for all $k \geq \ell + 1$ and $s \geq s_0 +1$, 
$$\sup_{a \in S_\delta, |y| < 2} \left|\frac{\partial w_a}{\partial y_k}(y,s)\right| \leq Ce^{-\frac s2}s^{\mu} \quad \text{for some }\;\; \mu >0.$$
Consider $a \in S_\delta$ and $y = (\bar{y}, \tilde{y})$, where $\bar{y} = (y_1, \cdots, y_\ell)$ is such that $y_{i_*} = 1$ for some $i_* \in \{1, \cdots, \ell\}$, $y_j = 0$ for $1 \leq j \ne i_* \leq \ell$,  and $\tilde{y} = (y_{\ell +1}, \cdots, y_N)$ is arbitrary in $\partial B_{N-\ell}(0,1)$. For $s \geq \max\{s_0 + 1, s_1\}$, we consider $b = b(a,y,s)$ defined as in \eqref{def:point_b}. Since $\gamma_{a}$ is $\mathcal{C}^{1, \frac 12 - \eta}$ for any $\eta > 0$, we use \eqref{est:geoconst} with $\alpha^* = \alpha \in (0,\frac 12)$ to write for $k \in \{ \ell+1, \cdots, N\}$,
$$\frac{\kappa}{2ps} \left|\frac{\partial \gamma_{a,i_*}}{\partial \xi_k}(e^{-\frac s2}\tilde{y})\right| \leq C \frac{\log s}{s^2}\sum_{i = 1}^\ell \left|\frac{\partial \gamma_{a,i}}{\partial \xi_k}(e^{-\frac s2}\tilde{y})\right| + Ce^{-\frac s2}s^{\mu}.$$
Since $i_*$ is arbitrary in $\{1, \cdots, \ell\}$, we get 
$$\frac{\kappa}{2ps} \sum_{i = 1}^\ell\left|\frac{\partial \gamma_{a,i}}{\partial \xi_k}(e^{-\frac s2}\tilde{y})\right| \leq C \frac{\log s}{s^2}\sum_{i = 1}^\ell \left|\frac{\partial \gamma_{a,i}}{\partial \xi_k}(e^{-\frac s2}\tilde{y})\right| + Ce^{-\frac s2}s^{\mu},$$
which gives
$$\sum_{i = 1}^\ell \left|\frac{\partial \gamma_{a,i}}{\partial \xi_k}(e^{-\frac s2}\tilde{y})\right| \leq C e^{-\frac s2}s^{1 + \mu}.$$
If $\tilde{\xi} = e^{-\frac s2}\tilde{y}$, then $|\tilde{\xi}| = e^{-\frac s2}$ and $|\log |\tilde{\xi}|| = \frac s2$ since $|\tilde{y}| = 1$. Therefore, 
$$\sum_{i = 1}^\ell \left|\frac{\partial \gamma_{a,i}}{\partial \xi_k}(e^{-\frac s2}\tilde{y})\right| \leq C |\tilde{\xi}||\log|\tilde{\xi}||^{1 + \mu}.$$
Since $\tilde{y}$ is arbitrary in $\partial B_{N-\ell}(0,1)$, $\tilde{\xi} = e^{-\frac s2}\tilde{y}$ covers a whole neighborhood of $0$, namely $B(0, \xi_0)$ where $\xi_0 = e^{-\frac 12\max\{s_0+1,s_1\}}$, this concludes the proof of Proposition \ref{prop:C11eta}.
\end{proof}

\subsubsection*{Step 2: Further refined asymptotic behavior and deriving $\mathcal{C}^2$-regularity of $S$.}
In this part, we shall use the $\mathcal{C}^{1,1-\eta}$-regularity of the blow-up set together with the geometric constraint \eqref{est:geoconst} in order to refine further the asymptotic behavior \eqref{est:expo_s}. In particular, we claim the following:
\begin{proposition}[Further refined asymptotic behavior \eqref{est:expo_s}] \label{prop:furRe} There exist $s_2 > 0$, $d \in (0, \frac 12)$ and continuous functions $a \to \lambda_\beta(a)$ for all $\beta \in \mathbb{N}^N$ with $|\beta| = 3$ and $|\bar{\beta}| = 1$, where $\bar{\beta} = (\beta_1, \cdots, \beta_{\ell})$, $|\bar{\beta}| = \sum\limits_{i=1}^{\ell}\beta_i$, such that for all $a \in S_\delta$ and $s \geq s_2$,
\begin{equation}\label{est:furRe}
\left\|W_a(Q_ay,s) - w_{\mathcal{B}(a)}(\bar{y}, s) - \frac{e^{-\frac s2}}{s}\sum_{|\beta| = 3, |\bar{\beta}| = 1}\lambda_\beta(a)h_\beta(y)\right\|_{L^2_\rho} \leq Ce^{-\frac s2}s^{d-\frac 32},
\end{equation}
where $h_\beta$ is defined in \eqref{def:hbeta}.
 \end{proposition}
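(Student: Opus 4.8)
The plan is to study the difference $G(y,s) = W_a(Q_ay,s) - w_{\mathcal{B}(a)}(\bar{y},s)$ through the spectral decomposition associated with the linearized operator $\mathcal{L} = \Delta - \tfrac12 y\cdot\nabla + 1$, which is self-adjoint on $L^2_\rho$ with eigenvalues $1 - \tfrac{|\beta|}{2}$ and eigenfunctions $h_\beta$. Recall from the proof of Proposition \ref{prop:expodecay} that $G$ solves $\partial_s G = \mathcal{L}G + \alpha G$ with $\alpha \le C_1/s$, and that \eqref{est:expo_s} provides the starting bound $\|G(s)\|_{L^2_\rho} \le Ce^{-\frac{s}{2}}s^{C_0}$. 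First I would sharpen the potential: since $W_a(Q_a\cdot,s)$ and $w_{\mathcal{B}(a)}(\bar{y},s)$ share the profile $\kappa + \tfrac{\kappa}{2ps}(\ell - \tfrac{|\bar{y}|^2}{2})$, a Taylor expansion gives $\alpha(y,s) = \tfrac1s\alpha_1(\bar{y}) + O(\tfrac{\log s}{s^2})$ on compact sets, with $\alpha_1(\bar{y}) = \tfrac12(\ell - \tfrac{|\bar{y}|^2}{2})$. Writing $G = \sum_\beta g_\beta(s)h_\beta$, the coefficients obey $\dot g_\beta = (1 - \tfrac{|\beta|}{2})g_\beta + \tfrac1s \langle \alpha_1 G, h_\beta\rangle_\rho / \|h_\beta\|_{L^2_\rho}^2 + (\text{lower order})$.

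Because $\|G(s)\|_{L^2_\rho}$ decays like $e^{-s/2}$, the components on the nonnegative eigenvalues $|\beta|\le 2$ are forced to vanish; the borderline quadratic ($|\beta|=2$) component is exactly what the choice of $\mathcal{B}(a)$ in Proposition \ref{prop:expodecay} annihilates (via Propositions \ref{prop:class} and \ref{prop:cons}), while the modes $|\beta|\ge 4$ are $O(e^{-s})$ up to powers of $s$ and are absorbed in the remainder. Hence the leading contribution sits in the degenerate eigenspace $|\beta|=3$ (eigenvalue $-\tfrac12$). On this eigenspace, multiplication by $\alpha_1$ acts diagonally (up to modes of different degree, by parity and the Hermite recursion), and one computes that the diagonal coefficient $c_\beta$ depends only on $|\bar{\beta}|$, namely $c_\beta = 0, -1, -2, -3$ for $|\bar{\beta}| = 0,1,2,3$. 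Thus each $|\beta|=3$ mode decouples and behaves like $g_\beta(s)\sim e^{-\frac{s}{2}}s^{c_\beta}$: only the modes with $|\bar{\beta}|=1$ (and $|\tilde{\beta}|=2$) carry the claimed rate $\tfrac{e^{-s/2}}{s}$, the pure-bar modes $|\bar{\beta}|=3$ decay like $e^{-\frac{s}{2}}s^{-3}$, and the mixed modes with $|\bar{\beta}|=2$ like $e^{-\frac{s}{2}}s^{-2}$.

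The one genuinely dangerous family is $|\bar{\beta}|=0,\ |\tilde{\beta}|=3$, whose self-rate $c_\beta=0$ would produce a term of order $e^{-s/2}$ dominating the leading order. This is precisely where the geometric constraint \eqref{est:geoconst} and the $\mathcal{C}^{1,1-\eta}$-regularity of Proposition \ref{prop:C11eta} enter. Evaluating \eqref{est:geoconst} along $\bar{y}=0$, the main term $\tfrac{\kappa}{2ps}\sum_i \partial_{\xi_k}\gamma_{a,i}(e^{-\frac{s}{2}}\tilde{y})y_i$ vanishes, so that $\partial_{y_k}w_a(0,\tilde{y},s)$ coincides, up to $O(e^{-\frac{(1+\alpha^*)s}{2}}s^{C_0})$, with the $\tilde{y}$-independent quantity $\partial_{y_k}w_b(0,\dots,0,s)$, once the bound $\sum_i|\partial_{\xi_k}\gamma_{a,i}(e^{-\frac{s}{2}}\tilde{y})|\le Ce^{-\frac{s}{2}}s^{1+\mu}$ of Proposition \ref{prop:C11eta} is inserted into the error terms. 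Hence the pure-$\tilde{y}$ content of $w_a$ is almost affine in $\tilde{y}$, forcing the $|\bar{\beta}|=0,|\tilde{\beta}|=3$ modes of $G$ to be smaller than $e^{-\frac{s}{2}}s^{d-\frac32}$ for any $d\in(0,\tfrac12)$; the same estimate controls the remaining odd modes. With the dangerous modes eliminated, I rescale the surviving $|\bar{\beta}|=1$ coefficients by $u_\beta(s)=s\,e^{s/2}g_\beta(s)$; the decoupling together with $c_\beta=-1$ yields $\dot u_\beta = s\,e^{s/2}F_\beta(s)$ with an integrable right-hand side, whence $u_\beta(s)\to\lambda_\beta(a)$. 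This convergence is exactly the limit appearing in \eqref{est:limitLam} and defines the continuous coefficients $\lambda_\beta(a)$ (continuity following from that of $a\mapsto W_a$, $a\mapsto\mathcal{B}(a)$ and $a\mapsto Q_a$); choosing $d\in(0,\tfrac12)$ so that $e^{-\frac{s}{2}}s^{d-\frac32}$ dominates all discarded contributions then produces \eqref{est:furRe}.

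The hard part will be upgrading the formal diagonalization inside the degenerate eigenspace $|\beta|=3$ to rigorous a priori estimates, uniformly in $a\in S_\delta$: one must control the genuine off-diagonal couplings to $|\beta|=5$ and back, propagate the exactly resonant self-rate $c_\beta=-1$ without incurring spurious $\log s$ losses, and — most delicately — convert the geometric information, available only at the $\mathcal{C}^{1,1-\eta}$ level through \eqref{est:geoconst}, into a bound on the odd $\tilde{y}$-modes strong enough to beat the leading order. Establishing the convergence of $u_\beta$ with a uniform, quantitative remainder of size $e^{-\frac{s}{2}}s^{d-\frac32}$ is the crux, and it is exactly this refinement that later feeds into the identification of $\lambda_\beta(a)$ with the second fundamental form in Theorem \ref{theo:2}.
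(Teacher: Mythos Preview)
Your proposal is correct and follows essentially the same route as the paper: spectral decomposition of $g_a=W_a(Q_a\cdot,s)-w_{\mathcal{B}(a)}$, identification of the $|\beta|=3$ eigenspace as critical with self-coupling rate $-|\bar\beta|/s$ coming from $\alpha\sim -\tfrac{1}{4s}\sum_{i\le\ell}h_2(y_i)$, elimination of the dangerous $|\bar\beta|=0$ modes via the geometric constraint at $\bar y=0$, and extraction of the $|\bar\beta|=1$ coefficients. The only cosmetic discrepancies are that the paper packages the bootstrap as explicit iterations of a single lemma (your ``$|\beta|\le 2$ forced to vanish'' and ``$|\beta|\ge 4$ are $O(e^{-s})$'' should read $O(e^{-s/2}s^{\mu_0-1})$, and the $|\bar\beta|=0$ coefficients are first shown to converge to constants $\lambda_\beta(a)$ which are \emph{then} proved to be zero), but the mechanism is identical.
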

\begin{proof} The proof of this proposition is based on ideas of \cite{ZAAdm06} where the case $\ell = 1$ was treated. As in \cite{ZAAdm06}, the geometric constraint given in Proposition \ref{prop:geocon} plays an important role in deriving \eqref{est:furRe}. Since the proof is long and technical, we leave it to Section \ref{sec:futre}. 
\end{proof}

Let us derive Theorem \ref{theo:1} from Propositions \ref{prop:furRe} and \ref{prop:geocon}. In particular, Theorem \ref{theo:1} is a direct consequence of the following:
\begin{proposition}\label{prop:idenC2} For all $a \in S_\delta$, we have for all $i \in \{1, \cdots, \ell\}$, $j,k \in \{\ell + 1, \cdots, N\}$,
$$\Lambda^{(i)}_{j,k}(a) = \frac{\partial^2 \gamma_{a,i}}{\partial \xi_j \xi_k}(0) = \frac{2p}{\kappa}(1 + \delta_{j,k})\lambda_{e_i + e_j + e_k}(a),$$
where $a \to \lambda_{\beta}(a)$ is introduced in Proposition \ref{prop:furRe}, $e_i$ is the $i$-th vector of canonical base of $\RN$, and $\delta_{i,k}$ is the Kronecker symbol.
\end{proposition}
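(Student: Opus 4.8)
\emph{Structure of the claim.} I would first observe that the asserted identity splits into two equalities. The first, $\Lambda^{(i)}_{j,k}(a)=\frac{\partial^2\gamma_{a,i}}{\partial\xi_j\xi_k}(0)$, is purely differential-geometric: in the local chart \eqref{def:localChart} the point $a$ sits at the origin with $\nabla\gamma_{a,i}(0)=0$, so the tangent vectors at $a$ are exactly $\tau_{\ell+1}(a),\dots,\tau_N(a)$ and the normals are $\eta_1(a),\dots,\eta_\ell(a)$; differentiating the parametrization twice and projecting onto $\eta_i(a)=Q_a^Te_i$ yields precisely the Hessian of $\gamma_{a,i}$ at $0$. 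Hence the real content is the analytic equality $\frac{\partial^2\gamma_{a,i}}{\partial\xi_j\xi_k}(0)=\frac{2p}{\kappa}(1+\delta_{j,k})\lambda_{e_i+e_j+e_k}(a)$, which simultaneously produces the \emph{existence} of the second derivatives, hence the $\mathcal{C}^2$-regularity and thus Theorem \ref{theo:1}. The plan is to feed the refined expansion of Proposition \ref{prop:furRe} into the geometric constraint of Proposition \ref{prop:geocon} and to match the two sides at order $\frac{e^{-s/2}}{s}$.

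\emph{Matching the two sides.} I would fix $k\in\{\ell+1,\dots,N\}$ and differentiate \eqref{est:furRe} in the tangential variable $y_k$. Since $w_{\mathcal{B}(a)}(\bar y,s)$ depends only on $\bar y=(y_1,\dots,y_\ell)$ it drops out, and the leading contribution is $\frac{e^{-s/2}}{s}\sum_{|\beta|=3,\,|\bar\beta|=1}\lambda_\beta(a)\,\partial_{y_k}h_\beta(y)$. On the other hand, Proposition \ref{prop:geocon} expresses the same derivative through the remainder $\frac{\partial w_b}{\partial y_k}(\bar y,0,\dots,0,s)$ and the curvature term $\frac{\kappa}{2ps}\sum_i\frac{\partial\gamma_{a,i}}{\partial\xi_k}(e^{-\frac s2}\tilde y)\,y_i$. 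Applying \eqref{est:furRe} to $b$ and differentiating, one checks that $\frac{\partial w_b}{\partial y_k}(\bar y,0,\dots,0,s)$ is negligible at this order: every $\beta$ occurring in \eqref{est:furRe} has exactly one index $\le\ell$ and two indices $>\ell$, so each $\partial_{y_k}h_\beta$ retains a tangential factor which vanishes once $\tilde y=0$. This leaves the balance
\begin{equation*}
\frac{\kappa}{2ps}\sum_{i=1}^\ell\frac{\partial\gamma_{a,i}}{\partial\xi_k}(e^{-\frac s2}\tilde y)\,y_i=\frac{e^{-\frac s2}}{s}\sum_{|\beta|=3,\,|\bar\beta|=1}\lambda_\beta(a)\,\partial_{y_k}h_\beta(y)+o\!\left(\frac{e^{-\frac s2}}{s}\right).
\end{equation*}

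\emph{Reading off the Hessian.} I would then set $\tilde\xi=e^{-\frac s2}\tilde y$, so that $\xi_n=e^{-\frac s2}y_n$ and $|\tilde\xi|=e^{-\frac s2}|\tilde y|$. The right-hand side is a quadratic form in $y$: differentiating $h_{e_i+e_k+e_n}$ with $n\ne k$ in $y_k$ produces $y_iy_n$, while differentiating $h_{e_i+2e_k}$ produces $2y_iy_k$, the factor $2$ matching $1+\delta_{k,k}$. Identifying the coefficient of each monomial $y_iy_n$ on both sides gives
\begin{equation*}
\frac{\partial\gamma_{a,i}}{\partial\xi_k}(\tilde\xi)=\sum_{n=\ell+1}^N\frac{2p}{\kappa}(1+\delta_{k,n})\lambda_{e_i+e_k+e_n}(a)\,\xi_n+o(|\tilde\xi|),
\end{equation*}
which is exactly the statement that $\frac{\partial\gamma_{a,i}}{\partial\xi_k}$ is differentiable at $0$ with second derivative $\frac{2p}{\kappa}(1+\delta_{k,n})\lambda_{e_i+e_k+e_n}(a)$. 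Continuity of $a\mapsto\lambda_\beta(a)$ (Proposition \ref{prop:furRe}), applied at each $a\in S_\delta$ viewed as the centre of its own chart, would then upgrade this to the $\mathcal{C}^2$-regularity of $S$. Finally, to recover the limit formula \eqref{est:limitLam} of Theorem \ref{theo:2}, I would project \eqref{est:furRe} onto $h_{e_i+e_j+e_k}(y)=y_i(y_jy_k-2\delta_{j,k})$ in $L^2_\rho$: the $w_{\mathcal{B}(a)}$-part contributes nothing by orthogonality in the $\tilde y$-variables, and using $\|h_{e_i+e_j+e_k}\|_{L^2_\rho}^2=8(1+\delta_{j,k})$ one obtains $\Lambda^{(i)}_{j,k}(a)=\frac{p}{4\kappa}\lambda_{e_i+e_j+e_k}(a)\|h_{e_i+e_j+e_k}\|_{L^2_\rho}^2=\frac{2p}{\kappa}(1+\delta_{j,k})\lambda_{e_i+e_j+e_k}(a)$, consistently with the two equalities above.

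\emph{Main obstacle.} The hard part will be making the order-$\frac{e^{-s/2}}{s}$ matching rigorous. The expansion \eqref{est:furRe} is an $L^2_\rho$ estimate, whereas \eqref{est:geoconst} is pointwise in $y$; I would therefore have to upgrade \eqref{est:furRe}, and in particular its $y_k$-derivative, to pointwise bounds via parabolic regularity, as was done for \eqref{est:expo_s} in deriving \eqref{est:extendys}, while keeping the resulting error strictly below $\frac{e^{-s/2}}{s}$. One also has to control, uniformly in $\tilde y$, all the error terms of \eqref{est:geoconst} together with the remainder $\frac{\partial w_b}{\partial y_k}(\bar y,0,\dots,0,s)$, so that the displayed $o(|\tilde\xi|)$ is genuine and independent of the direction $\tilde y$; this uniformity is precisely what guarantees true, as opposed to merely directional, differentiability of $\frac{\partial\gamma_{a,i}}{\partial\xi_k}$ at $0$.
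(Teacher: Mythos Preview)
Your approach is essentially the paper's: upgrade \eqref{est:furRe} to pointwise bounds by parabolic regularity, insert into the geometric constraint \eqref{est:geoconst}, and match at order $\frac{e^{-s/2}}{s}$. The paper carries this out by evaluating at the specific points $y=e_{i^*}+\epsilon e_j$ and $y=e_{i^*}$ (with $\epsilon=\pm1$) rather than identifying monomial coefficients in general, but this is only a cosmetic difference; your observation that $\partial_{y_k}h_\beta(\bar y,0)=0$ for every $\beta$ occurring in \eqref{est:furRe} is exactly what makes the $\frac{\partial w_b}{\partial y_k}(\bar y,0,\dots,0,s)$ term negligible at both specific points.

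There is, however, one step you have not isolated and which the $o(e^{-s/2}/s)$ in your balance equation genuinely needs. The first error term on the right of \eqref{est:geoconst} carries a factor $\sum_i\big|\frac{\partial\gamma_{a,i}}{\partial\xi_k}(e^{-s/2}\tilde y)\big|\cdot\frac{\log s}{s^2}$. With only the $\mathcal{C}^{1,1-\eta}$ bound of Proposition~\ref{prop:C11eta}, namely $\big|\frac{\partial\gamma_{a,i}}{\partial\xi_k}(\tilde\xi)\big|\le C|\tilde\xi|\,|\log|\tilde\xi||^{1+\mu}$ with $\mu>0$, this error is of size $e^{-s/2}s^{\mu-1}\log s$, which is \emph{not} $o(e^{-s/2}/s)$. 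The paper closes this by noticing that Proposition~\ref{prop:furRe} already gives $\|W_a(Q_ay,s)-w_{\mathcal{B}(a)}(\bar y,s)\|_{L^2_\rho}\le Cs^{-1}e^{-s/2}$, and then re-running the argument of Proposition~\ref{prop:C11eta} with this sharper input (formally, $\mu=-1$) to obtain $\sum_i\big|\frac{\partial\gamma_{a,i}}{\partial\xi_k}(e^{-s/2}\tilde y)\big|\le Ce^{-s/2}$ without logarithmic loss. Only then does the error in \eqref{est:geoconst} fall strictly below $e^{-s/2}/s$ and your displayed $o(|\tilde\xi|)$ become legitimate. This bootstrap is the one missing ingredient in your outline; everything else is on target.
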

\begin{proof} From \eqref{def:wabyWa}, \eqref{est:furRe} and the fact that estimate  \eqref{est:furRe} also holds in $W^{2, \infty}(|y| < 2)$ by parabolic regularity, we derive for all $k \geq \ell + 1$ and $s \geq s_2 + 1$, 
\begin{equation}\label{est:thLM1}
\sup_{a \in S_\delta, |y|< 2} \left|\frac{\partial w_a}{\partial y_k}(y,s) - \frac{e^{-\frac s2}}{s}\sum_{|\beta| = 3, |\bar{\beta}| = 1} \lambda_\beta(a)\frac{\partial h_\beta}{\partial y_k}(y)\right| \leq Ce^{-\frac s2}s^{d - \frac 32},
\end{equation}
for some $d \in (0,\frac 12)$.\\
Note that if $|\bar{\beta}| = 1$, then there is a unique index $i^* \in \{1, \cdots, \ell\}$ such that $\beta_{i^*} = 1$ and $\beta_m = 0$ for $m\in \{1,\cdots, \ell\}$, $m \ne i^*$. Note also from the definition of $h_\beta$ (see \eqref{def:hbeta} below)that 
$$\frac{\partial h_\beta}{\partial y_k}(y) = \beta_k h_{\beta_k - 1}(y_k) \prod_{j = 1, j \ne k}^N h_{\beta_j}(y_j),$$
and that $h_0 = 1$. Therefore, \eqref{est:thLM1}  yields
$$\left|\frac{\partial w_a}{\partial y_k}(y,s) - \frac{e^{-\frac s2}}{s}\sum_{i = 1}^{\ell}\sum_{|\beta| = 3, \beta_{i} = 1} \lambda_\beta(a) h_{1}(y_{i})\beta_k h_{\beta_k - 1}(y_k) \prod_{j = \ell_a +1, j \ne k} h_{\beta_j}(y_j)\right| \leq Ce^{-\frac s2}s^{d - \frac 32}.$$
Take $i^* \in \{1, \cdots, \ell\}$ arbitrarily and $y = e_{i^*} + \epsilon e_j$ where $\epsilon = \pm 1$ and $j \geq \ell + 1$, and note that $h_{m}(0) = 0$ if $m$ is odd, and that if $|\beta| = 3, \beta_{i^*} = 1$, then either $\beta = e_{i^*} + e_{j^*} + e_{k^*}$ or $\beta = e_{i^*} + 2e_{j^*}$ for some $j^*, k^* \in \{\ell + 1, \cdots,N\}$, the above identity yields
\begin{equation}\label{est:proLM2}
\left|\frac{\partial w_a}{\partial y_k}(e_{i^*} + \epsilon e_j,s) - \epsilon \frac{e^{-\frac s2}}{s}(1 + \delta_{k,j})\lambda_{e_{i^*} + e_k + e_j}(a)\right| \leq Ce^{-\frac s2}s^{d - \frac 32}.
\end{equation}
Similarly, we have 
\begin{equation}\label{est:proLM3}
\left|\frac{\partial w_a}{\partial y_k}(e_{i^*},s)\right| \leq Ce^{-\frac s2}s^{d - \frac 32}.
\end{equation}
Now using Proposition \ref{prop:geocon}, we write for $y = e_{i^*} + \epsilon e_j$ and $s \geq \max\{s_2 + 1, s_1\}$,
\begin{align*}
&\left|\frac{\partial w_a}{\partial y_k}(e_{i^*} + \epsilon e_j,s) -  \frac{\partial w_a}{\partial y_k}(e_{i^*},s) - \frac{\kappa}{2ps} \frac{\partial \gamma_{a, i^*}}{\partial \xi_k}(e^{-\frac s2}\epsilon e_j)\right|\\
&\leq C\frac{\log s}{s^2} \sum_{i = 1}^{\ell} \left|\frac{\partial \gamma_{a, i}}{\partial \xi_k}(e^{-\frac s2}\epsilon e_j)\right| + Ce^{-\frac{(1 + \alpha^*)s}{2}}s^{C_0} + Ce^{-s}s^{C_0 + 1}.
\end{align*}
Using this estimate together with \eqref{est:proLM2} and \eqref{est:proLM3}, we obtain
\begin{align}
&\left|\epsilon e^{-\frac s2}(1 + \delta_{k,j})\lambda_{e_{i^*} + e_k + e_j}(a) - \frac{\kappa}{2p} \frac{\partial \gamma_{a, i^*}}{\partial \xi_k}(e^{-\frac s2}\epsilon e_j)\right|\nonumber\\
&\leq C\frac{\log s}{s} \sum_{i = 1}^{\ell} \left|\frac{\partial \gamma_{a, i}}{\partial \xi_k}(e^{-\frac s2}\epsilon e_j)\right| + Ce^{-\frac s2}s^{d - \frac 12}.\label{est:tmpC21}
\end{align}
From Proposition \ref{prop:furRe}, we see that 
$$\forall s \geq s_2, \quad \|W_a(Q_ay,s) - w_{\mathcal{B}(a)}(\bar{y},s)\|_{L^2_\rho} \leq Cs^{-1} e^{-\frac s2}.$$
Using this estimate and noticing that the same proof of Proposition \ref{prop:C11eta} holds with $\mu = -1$, we derive
$$\sum_{i = 1}^{\ell} \left|\frac{\partial \gamma_{a, i}}{\partial \xi_k}(e^{-\frac s2}\epsilon e_j)\right| \leq Ce^{-\frac s2}.$$
Putting this estimate into \eqref{est:tmpC21} and note that $\frac{\partial \gamma_{a, i^*}}{\partial \xi_k}(0) = 0$, we find that 
\begin{equation}\label{equ:idenC2}
\frac{\partial^2\gamma_{a, i^*}}{\partial \xi_k \partial \xi_j}(0) = \lim_{s \to +\infty} \frac{\frac{\partial \gamma_{a, i^*}}{\partial \xi_k}(e^{-\frac s2}\epsilon e_j)}{\epsilon e^{-\frac s2}} = \frac{2p}{\kappa}(1 + \delta_{k,j})\lambda_{e_{i^*} + e_k + e_j}(a).
\end{equation}
Since $i^*$ is taken arbitrarily belonging to $\{1, \cdots, \ell\}$, identity \eqref{equ:idenC2} holds for all $i^* \in \{1, \cdots, \ell\}$. This concludes the proof of Proposition \ref{prop:idenC2}. 
\end{proof}

\begin{proof}[Proof of Theorem \ref{theo:1}] From the definition of the local chart \eqref{def:localChart}, we have for all $i \in \{1, \cdots, \ell\}$, $\gamma_{a,i}(0) = \nabla \gamma_{a,i}(0) = 0$. Hence, we deduce from \eqref{equ:idenC2} the expression of the second fundamental form of the blow-up set at the point $a$ along the unitary basic vector $Q_a^Te_i$:  for all $k,j \in \{\ell+1, \cdots, N\}$, 
\begin{equation}\label{def:Lambdakj}
\Lambda_{k,j}^{(i)}(a) = \frac{\partial^2\gamma_{a, i}}{\partial \xi_k \partial \xi_j}(0) = \frac{2p}{\kappa}(1 + \delta_{k,j})\lambda_{e_{i} + e_k + e_j}(a).
\end{equation}
In addition, since $a \to \lambda_\beta(a)$ is continuous, we conclude that the blow-up set is of class $\mathcal{C}^2$. This completes the proof of Theorem \ref{theo:1}.
\end{proof}

\begin{proof}[Proof of Theorem \ref{theo:2}] The estimate \eqref{est:link} directly follows from Propositions \ref{prop:furRe} and \ref{prop:idenC2}. Indeed, the sum in estimate \eqref{est:furRe} can be indexed as 
$$\{\beta \in \mathbb{N}^N, |\beta| = 3, |\bar{\beta}| = 1\} = \{e_i + e_j + e_k, 1\leq i \leq \ell, \ell + 1\leq j, k\leq N\},$$
where $e_k$ is the $k$-th canonical basis vector of $\RN$. From \eqref{def:Lambdakj} and the definition of $h_\beta$ (see \eqref{def:hbeta} below), we write
\begin{align*}
\sum_{|\beta| = 3, |\bar{\beta}| = 1}\lambda_\beta(a)h_\beta(y) & = \sum_{i = 1}^{\ell} \sum_{j,k = \ell +1}^N \lambda_{e_i + e_j +e_k}h_{e_i+e_j+e_k}(y)\\
&= \frac{\kappa}{2p}\sum_{i = 1}^{\ell}y_i \sum_{j,k =\ell + 1}^N \frac{\Lambda^{(i)}_{j,k}(a)}{1 + \delta_{j,k}}(y_jy_k - 2\delta_{j,k}),
\end{align*}
which yields \eqref{est:link}. 

As for \eqref{est:limitLam}, we note from \eqref{est:furRe} that for all $|\beta| = 3$ with $|\bar{\beta}| = 1$ that (recall that $g_{a}(y,s) = W_a(Q_ay,s) - w_{\mathcal{B}(a)}(\bar{y},s)$), 
$$\left|g_{a, \beta}(s) - \frac{e^{-\frac s2}}{s}\lambda_{\beta}(s)\right| \leq Ce^{-\frac s2}s^{d - \frac 32}.$$
Hence, we write from \eqref{def:Lambdakj}, 
\begin{align*}
\Lambda^{(i)}_{j,k}(a) &= \frac{2p}{\kappa}(1 + \delta_{j,k})\lambda_{e_i + e_j +e_k}(a)\\
&=\frac{2p}{\kappa}(1 + \delta_{j,k})\lim_{s\to +\infty}se^{\frac s2}g_{a, e_i + e_j +e_k}(s)\\
&= \frac{2p}{\kappa}(1 + \delta_{j,k})\lim_{s\to +\infty}se^{\frac s2}\int_{\RN}g_{a}(y,s)\frac{h_{e_i + e_j +e_k}(y)}{\|h_{e_i + e_j +e_k}\|^2_{L^2_\rho}}\rho(y)dy
\end{align*}
Using again the definition of $h_\beta$ (see \eqref{def:hbeta} below), we see that $h_{e_i + e_j +e_k} = y_i(y_jy_k - \delta_{j,k})$ and $\|h_{e_i + e_j +e_k}\|^2_{L^2_\rho} = 8(1 + \delta_{j,k})$. Recall that $w_{\mathcal{A}}$ does not depend on $y_j$ for $j \geq \ell + 1$. Hence, for all $j,k \geq \ell + 1$,
$$\Lambda^{(i)}_{j,k}(a) = \frac{p}{4\kappa}\lim_{s\to +\infty}se^{\frac s2}\int_{\RN}W_{a}(Q_ay,s)y_i(y_jy_k - 2\delta_{j,k}) \rho(y)dy,$$
which is \eqref{est:limitLam}. This concludes the proof of Theorem \ref{theo:2}.
\end{proof}

\section{Proof of Propositions \ref{prop:class}, \ref{prop:geocon}, \ref{prop:C1alph} and \ref{prop:furRe}.}\label{sec:proofAll}
\subsection{Classification of the difference of two solutions of \eqref{equ:WaI} having the same asymptotic behavior.}\label{sec:3}
In this subsection, we give the proof of Proposition \ref{prop:class}. The formulation is the same as given in \cite{FZnon00} for the difference of two solutions with the radial profile ($\ell = N$). Therefore, we sketch the proof and emphasize only on the novelties. Note also that the case $\ell = 1$ was treated in \cite{ZAAcmp02}.

Let us define 
\begin{equation}\label{def:ga}
g(y,s) = W_1(y,s) - W_2(y,s), 
\end{equation}
where $W_i,\; i = 1,2$ are the solutions of equation \eqref{equ:WaI} and behave like \eqref{hyp:W1W2}.  We see from \eqref{equ:WaI} and \eqref{hyp:W1W2} that for all $(y,s) \in \RN \times [-\log T, +\infty)$, 
\begin{equation}\label{equ:ga}
\left\{\begin{array}{l}
\partial_s g = \Lc  g + \alpha g,\\
\|g(s)\|_{L^2_\rho} \leq C\frac{\log s}{s^2},
\end{array}\right.
\end{equation}
where 
$$\Lc  = \Delta -\frac{1}{2}y\cdot \nabla + 1,$$
and 
\begin{equation*}
\alpha(y,s) = \frac{|W_1|^{p-1}W_1 - |W_2|^{p-1}W_2}{W_1 - W_2} - \frac{p}{p-1} \quad \text{if} \;\; W_1 \ne W_2,
\end{equation*}
in particular,
\begin{equation}\label{def:alpha2}
\alpha(y,s) = p|W_0(y,s)|^{p-1} - \frac{p}{p-1}, \quad \text{for some}\;\; W_0(y,s) \in (W_1(y,s), W_2(y,s)).
\end{equation}

The operator $\Lc $ is self-adjoint on $\mathcal{D}(\Lc ) \subset L^2_\rho(\RN)$. Its spectrum consists of eigenvalues
$$spec(\Lc ) = \left\{\lambda_n = 1 - \frac{n}{2},\; n \in \N\right\}.$$
The eigenfunctions corresponding to $1 - \frac{n}{2}$ are
\begin{equation}\label{def:hbeta}
h_\beta(y) = h_{\beta_1}(y_1)\cdots h_{\beta_N}(y_N),\quad \beta_1 + \cdots + \beta_N = |\beta| = n,
\end{equation}
where 
\begin{equation*}
h_m(\xi) = \sum_{i = 0}^{[m/2]} \frac{m!}{i!(m - 2i)!}(-1)^i\xi^{m - 2i} \quad \text{for}\;\; m \in \N,
\end{equation*}
satisfy 
\begin{equation*}
\int_\R h_m(\xi)h_n(\xi)\rho(\xi)d\xi = 2^m m! \delta_{m,n}.
\end{equation*}
The component of $g$ on $h_\beta$ is given by
\begin{equation*}
g_{\beta}(s) = \int_{\RN} k_\beta(y)g(y,s)\rho(y)dy, \quad \text{where}\quad k_\beta(y) = \frac{h_\beta(y)}{\|h_\beta\|_{L^2_\rho}^{2}}.
\end{equation*}
If we denote by $P_n$ the orthogonal projector of $L^2_\rho$ over the eigenspace of $\Lc $  corresponding to the eigenvalue $1 - \frac{n}{2}$, then
\begin{equation*}
P_ng(y,s) = \sum_{|\beta| = n} g_{\beta}(s)h_\beta(y).
\end{equation*}
Since the eigeinfunctions of $\Lc $ span the whole space $L^2_\rho$, we can write
\begin{equation*}
g(y,s) = \sum_{n \in N}P_ng(y,s) = \sum_{\beta \in \N^N}g_{\beta}(s)h_\beta(y) = \sum_{\beta \in \N^N, |\beta| \leq k}g_{\beta}(s)h_\beta(y) + R_{k+1}g(y,s),
\end{equation*}
where $R_kg = \sum\limits_{n\geq k} P_ng$. We also denote
\begin{equation}\label{def:Ia}
I(s)^2 = \|g(s)\|_{L^2_\rho}^2 = \sum_{n \in \N}l_{n}^2(s) = \sum_{n \leq k}l_{n}^2(s) + r_{k+1}^2(s),
\end{equation}
where 
\begin{equation}\label{def:lnrk}
l_{n}(s) = \|P_ng(s)\|_{L^2_\rho}, \quad r_{k}(s) = \|R_kg(s)\|_{L^2_\rho}.
\end{equation}

As for $\alpha$, we have the following estimates:
\begin{lemma}[Estimates on $\alpha$] \label{lemm:estalpha} For all $y \in \RN$ and $s \geq -\log T$, we have
$$\alpha(y,s) \leq \frac{C}{s},\quad |\alpha(y,s)| \leq \frac{C}{s}(1 + |y|^2),$$
and 
\begin{equation}\label{equ:mainalpha}
\left|\alpha(y,s) + \frac{1}{4s}\sum_{i = 1}^{\ell}h_2(y_i) \right| \leq \frac{C}{s^{\frac 32}}(1 + |y|^3).
\end{equation}
\end{lemma}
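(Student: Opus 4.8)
The plan is to reduce everything to a pointwise understanding of the intermediate value $W_0(y,s)$ from \eqref{def:alpha2}, which is trapped between $W_1(y,s)$ and $W_2(y,s)$, and then to expand the map $G(t)=p|t|^{p-1}$ around its value at $\kappa$. Since $p\kappa^{p-1}=\frac{p}{p-1}$, we have $\alpha=G(W_0)-G(\kappa)$, and because $W_1,W_2$ obey the same first-order asymptotics \eqref{hyp:W1W2}, the sandwiched point $W_0$ inherits the corresponding pointwise bounds; it therefore suffices to control $G(W_0)-G(\kappa)$ for any value $W_0$ lying between the two profiles.

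For the first, one-sided estimate $\alpha\le C/s$ I would use only the $L^\infty$ bound of Merle and Zaag \cite{MZgfa98}, namely $\|W_i(s)\|_{L^\infty}\le\kappa+C/s$ for $s$ large. This forces $|W_0|\le\kappa+C/s$ everywhere, and since $t\mapsto t^{p-1}$ is increasing on $[0,\infty)$, $G(W_0)\le p(\kappa+C/s)^{p-1}\le\frac{p}{p-1}(1+C/s)$, i.e.\ $\alpha\le C/s$ globally. Combined with the trivial lower bound $\alpha\ge-\frac{p}{p-1}$ (from $|W_0|^{p-1}\ge0$) this gives the crude global bound $|\alpha|\le C$, which will handle the range where $|y|$ is large.

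For the two remaining two-sided estimates I would split $\RN$ into the parabolic region $|y|\le K\sqrt s$ and its complement. On the complement $|y|\ge K\sqrt s$ the crude bound $|\alpha|\le C$ suffices: the right-hand sides $\frac Cs(1+|y|^2)$ and $\frac{C}{s^{3/2}}(1+|y|^3)$ are bounded below by a positive constant there, and the correction $\frac{1}{4s}\sum_{i=1}^\ell h_2(y_i)$, being $O(s^{-1}(1+|y|^2))$, is absorbed by $\frac{C}{s^{3/2}}(1+|y|^3)$ once $|y|\ge\sqrt s$. On the parabolic region I would feed in the pointwise asymptotic profile of $W_i$ available from Vel\'azquez \cite{VELcpde92} (in the quantified form used in \cite{FZnon00}), which for bounded $|y|$ reduces to the quadratic profile $\kappa+\frac{\kappa}{2ps}(\ell-\frac{|\bar y|^2}{2})$ of \eqref{hyp:W1W2}. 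Taylor expanding $G$ at $\kappa$, with $G'(\kappa)=\frac{p}{\kappa}$, the first-order term yields exactly $\frac{p}{\kappa}\cdot\frac{\kappa}{2ps}(\ell-\frac{|\bar y|^2}{2})=\frac{1}{2s}(\ell-\frac{|\bar y|^2}{2})=-\frac{1}{4s}\sum_{i=1}^\ell h_2(y_i)$, since $\sum_{i=1}^\ell h_2(y_i)=|\bar y|^2-2\ell$; the quadratic Taylor remainder together with the $o(1/s)$ correction of the profile combine into the announced error $\frac{C}{s^{3/2}}(1+|y|^3)$ (using $s^{-2}|y|^4\le s^{-3/2}|y|^3$ on $|y|\le\sqrt s$). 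The intermediate estimate $|\alpha|\le\frac Cs(1+|y|^2)$ follows in the same way from the first-order term alone.

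The hard part is the bookkeeping inside the parabolic region. Near $|y|\sim\sqrt s$ the quantity $W_0-\kappa$ is of order one, so one cannot simply Taylor expand $G$ around $\kappa$; one must instead use the genuine self-similar profile $f_\ell(\bar y/\sqrt s)$ (for which $p f_\ell(z)^{p-1}-\frac{p}{p-1}=-\tfrac14|z|^2/(1+\tfrac{p-1}{4p}|z|^2)$) and reconcile it, through the matched expansion, with the inner quadratic profile that governs bounded $|y|$ and carries the constant $\frac{\kappa\ell}{2ps}$ that $f_\ell(\bar y/\sqrt s)$ alone misses. Tracking the cancellations across these two scales so that the full expansion of $\alpha$ collapses precisely to $-\frac{1}{4s}\sum_{i=1}^\ell h_2(y_i)$ with remainder $\frac{C}{s^{3/2}}(1+|y|^3)$ is the only delicate point; it is carried out as in \cite{FZnon00}, the sole novelty for $\ell\ge2$ being that the single Hermite term $h_2(y_1)$ is replaced by the symmetric sum $\sum_{i=1}^\ell h_2(y_i)$, which alters none of the estimates.
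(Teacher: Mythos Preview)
Your proposal is correct and follows precisely the approach of Lemma~2.5 in \cite{FZnon00}, which is exactly what the paper invokes (its entire proof is a one-line reference to that lemma, noting only that $\ell$ need not equal $N$). The computations you spell out---the Merle--Zaag $L^\infty$ bound for the one-sided estimate, the split at $|y|\sim K\sqrt s$, and the Taylor expansion $G'(\kappa)=p/\kappa$ yielding $-\tfrac1{4s}\sum_{i=1}^\ell h_2(y_i)$---are the content of that cited argument, and your observation that the only change for $\ell<N$ is replacing $h_2(y_1)$ by the symmetric sum over $i\le\ell$ is exactly the point.
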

\begin{proof} The proof follows the same lines as the proof of Lemma 2.5 in \cite{FZnon00} where the case $\ell = N$ was treated.  
\end{proof}

In the following lemma, we project equation \eqref{equ:ga} on the different modes  to get estimates for $I(s)$, $l_{n}(s)$ and $r_{n}(s)$. More precisely, we claim the following:
\begin{lemma}[Evolution of $I(s)$, $l_{n}(s)$ and $r_{n}(s)$]\label{lemm:evoIlr} There exist $s_3 \geq -\log T$ and $s_* > 0$ such that for all $s \geq s_3$, $n \in \N$ and $\beta \in \N^N$, we have
\begin{itemize}
\item[i)] $\left|l'_{n}(s) + \left(\frac{n}{2} - 1\right)l_{n}(s)\right| \leq C(n)\dfrac{I(s)}{s}$.
\item[ii)] $I'(s) \leq \left(1 - \frac{n+1}{2} + \frac{C_0}{s}\right)I(s) + \sum\limits_{k = 0}^n \frac{1}{2}(n + 1 - k)l_{k}(s)$.
\item[iii)] $\left|g'_{\beta}(s) + \left(-1 + \frac{|\beta|}{2} + \frac{1}{s}\sum\limits_{i=1}^{\ell}\beta_i \right)g_{\beta}(s)\right| \leq C(\beta)\left(\dfrac{1}{s^{\frac 32}}I(s) + \dfrac{1}{s}\left(l_{|\beta| - 2}(s) + l_{|\beta| + 2}\right)\right)$.
\item [iv)] $r'_{n}(s) \leq \left(1 - \frac{n}{2}\right)r_{n}(s) + \frac{C}{s}I(s - s_*)$.
\end{itemize}
\end{lemma}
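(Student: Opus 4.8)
The plan is to project the equation $\partial_s g = \Lc g + \alpha g$ onto the eigenspaces of $\Lc$ and exploit that $\Lc$ acts diagonally (with eigenvalue $1-\frac n2$ on the degree-$n$ space $E_n = \mathrm{span}\{h_\beta : |\beta|=n\}$), while the only coupling between modes comes from the zero-order term $\alpha g$, controlled by Lemma \ref{lemm:estalpha}. The recurring technical device will be that multiplication by a fixed polynomial maps each finite-dimensional space $E_n$ boundedly into a finite sum of neighbouring spaces $E_{n-2}\oplus E_n \oplus E_{n+2}$, with a norm depending only on $n$ and the polynomial; this converts the weighted bounds on $\alpha$ into genuine $L^2_\rho$ estimates paired against $g$.

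For (i) and (iii) I would differentiate the relevant scalar quantities. Writing $\frac12 \frac{d}{ds} l_n^2 = \langle P_n g, \Lc g\rangle + \langle P_n g, \alpha g\rangle = (1-\frac n2)l_n^2 + \langle \alpha P_n g, g\rangle$ and bounding $|\langle \alpha P_n g, g\rangle| \le \frac Cs \|(1+|y|^2)P_n g\|_{L^2_\rho}\, I(s) \le \frac{C(n)}s l_n(s) I(s)$ (by the multiplication device and $|\alpha|\le \frac Cs(1+|y|^2)$) yields (i) after dividing by $l_n$. For (iii), $g_\beta' = (1-\frac{|\beta|}2)g_\beta + \langle k_\beta, \alpha g\rangle$, and I would insert the sharp expansion \eqref{equ:mainalpha}, whose remainder satisfies $|\alpha + \frac1{4s}\sum_{i=1}^\ell h_2(y_i)|\le \frac{C}{s^{3/2}}(1+|y|^3)$ and is absorbed into $\frac{C(\beta)}{s^{3/2}}I(s)$. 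The main term is computed with the Hermite recurrence $h_2(y_i)h_\beta = 4\beta_i h_\beta + h_{\beta+2e_i} + 4\beta_i(\beta_i-1)h_{\beta-2e_i}$: the diagonal coefficient $4\beta_i$ produces the shift $-\frac1s\big(\sum_{i=1}^\ell \beta_i\big) g_\beta$, and the off-diagonal terms contribute components of degree $|\beta|\pm 2$, hence the $\frac1s(l_{|\beta|-2}+l_{|\beta|+2})$ on the right-hand side.

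For (ii) I would differentiate $I^2$: $\frac12\frac{d}{ds}I^2 = \sum_k (1-\frac k2)l_k^2 + \langle g,\alpha g\rangle$. Splitting the first sum at $k=n$, using $1-\frac k2 \le 1-\frac{n+1}2$ for $k\ge n+1$ and $1-\frac k2 = (1-\frac{n+1}2)+\frac{n+1-k}2$ for $k\le n$, gives $\sum_k(1-\frac k2)l_k^2 \le (1-\frac{n+1}2)I^2 + \sum_{k=0}^n \frac{n+1-k}2 l_k^2$; the one-sided bound $\alpha \le \frac{C_0}s$ gives $\langle g,\alpha g\rangle \le \frac{C_0}s I^2$. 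Dividing by $I$ and using the elementary inequality $l_k^2/I \le l_k$ (valid since $l_k\le I$) yields (ii).

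The main obstacle is (iv), where the same differentiation gives $r_n r_n' \le (1-\frac n2)r_n^2 + \langle R_n g, \alpha g\rangle$, so everything reduces to bounding $\|\alpha(s)g(s)\|_{L^2_\rho}$. Here the finite-dimensional multiplication device fails, because $R_n g$ carries infinitely many modes and $(1+|y|^2)R_n g$ is no longer controlled by $r_n$ alone. The remedy, following \cite{FZnon00}, is a weighted smoothing estimate $\|(1+|y|^2)g(s)\|_{L^2_\rho} \le C\, I(s-s_*)$ for a fixed lag $s_*>0$: writing the Duhamel formula $g(s) = e^{s_*\Lc}g(s-s_*) + \int_{s-s_*}^s e^{(s-\sigma)\Lc}(\alpha g)(\sigma)\,d\sigma$ and using the explicit Mehler (Ornstein--Uhlenbeck) kernel of $e^{\tau\Lc}$, which gains arbitrary polynomial weights for $\tau>0$, one controls the polynomially weighted norm of $g(s)$ by the plain $L^2_\rho$ norm at the earlier time $s-s_*$, a short Gronwall argument handling the Duhamel integral. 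Combined with $|\alpha|\le\frac Cs(1+|y|^2)$ this gives $\|\alpha g(s)\|_{L^2_\rho}\le \frac Cs I(s-s_*)$, and dividing by $r_n$ yields (iv).
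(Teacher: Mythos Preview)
Your arguments for (i)--(iii) are correct and match the approach in the cited references \cite{FZnon00,ZAAcmp02}: projection onto eigenspaces, the pointwise bounds on $\alpha$ from Lemma~\ref{lemm:estalpha}, and the Hermite recurrence $h_2(y_i)h_\beta = h_{\beta+2e_i}+4\beta_i h_\beta+4\beta_i(\beta_i-1)h_{\beta-2e_i}$ for the precise diagonal shift $-\frac1s\sum_{i\le\ell}\beta_i$ in (iii).

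For (iv) your strategy is right in spirit but the paper (following \cite{ZAAdm06}) packages the same smoothing mechanism differently: rather than bounding $\|(1+|y|^2)g(s)\|_{L^2_\rho}$ via Duhamel and the Mehler kernel, it invokes the ready-made Herrero--Vel\'azquez delay estimate $\|g(s)\|_{L^4_\rho}\le C\,\|g(s-s_*)\|_{L^2_\rho}$ from \cite{HVaihn93}, after which $\|\alpha g\|_{L^2_\rho}\le \frac Cs\|(1+|y|^2)\|_{L^4_\rho}\|g\|_{L^4_\rho}\le \frac Cs I(s-s_*)$ by Cauchy--Schwarz. The two routes are equivalent in substance (indeed your weighted $L^2$ bound follows from the $L^4_\rho$ one by the same Cauchy--Schwarz), but the $L^4_\rho$ version sidesteps the issue that the weighted smoothing constant for $e^{\tau\Lc}$ blows up like $1/\tau$ as $\tau\to 0$, which makes the kernel in your Duhamel integral non-integrable near $\sigma=s$; your ``short Gronwall argument'' glosses over this, and closing it rigorously essentially amounts to reproving the Herrero--Vel\'azquez estimate.
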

\begin{proof} For $(i)$ and $(ii)$, see Lemma 2.7, page 1197 in \cite{FZnon00}. For $(iii)$, see Appendix B.1, page 545 in \cite{ZAAcmp02} for a similar calculations. For $(iv)$, see page 523 in \cite{ZAAdm06}, where the calculation is mainly based on the following regularizing property of equation \eqref{equ:ga} by Herrero and Vel\'azquez \cite{HVaihn93} (control of the $L^4_\rho$-norm by the $L^2_\rho$-norm up to some delay in time, see Lemma 2.3 in \cite{HVaihn93}):
\begin{equation*}
\left(\int g^4(y,s)\rho dy \right)^{1/4} \leq C\left(\int g^2(y,s - s_*)\rho dy \right)^{\frac 12} \quad \text{for some}\;\; s_* > 0.
\end{equation*}
This ends the proof of Lemma \ref{lemm:evoIlr}.
\end{proof}

In the next step, we use Lemma \ref{lemm:evoIlr} to show that either the null mode or a negative mode of $\Lc $ will dominate as $s \to +\infty$. In particular, we have the following:
\begin{proposition}[Dominance of a mode and its description]\label{prop:dom} We have\\
$i)\;$ Either for all $n \in \N$, $l_{n}(s) = \mathcal{O}\left(\dfrac{I(s)}{s}\right)$ and there exist $\sigma_n$, $C_n >0$ and $C_n' > 0$ such that 
$$\forall s \geq \sigma_n, \quad I(s) \leq C_ns^{C'_n}exp \left((1 - n/2)s\right).$$ 
$ii)$ Or there is $n_0 \geq 2$ such that 
\begin{equation}\label{eq:Iln0}
I(s) \sim l_{n_0}(s)\;\;\text{and}\;\; \forall n \ne n_0, \;\; l_{n}(s) = \mathcal{O}\left(\dfrac{I(s)}{s}\right) \quad \text{as}\;\;s \to +\infty.
\end{equation}
Moreover, 
\begin{itemize}
\item[$\bullet$] If $n_0 = 2$, namely $I(s) \sim l_{2}(s)$,  then 
\begin{equation}\label{equ:ODEbetaa2}
\forall |\beta| = 2, \quad \left\{\begin{array}{ll}
|g_{\beta}(s)| \leq C\frac{\log s}{s^{5/2}} &\text{if}\;\; \sum_{i=1}^{\ell} \beta_i \ne 2,\\
\left|g_{\beta}(s) - \frac{c_{\beta}}{s^2}\right| \leq C\frac{\log s}{s^{5/2}} &\text{if}\;\; \sum_{i=1}^{\ell} \beta_i = 2.
\end{array} \right.
\end{equation}
\item[$\bullet$] If $n_0 = 3$, namely $I(s) \sim l_{3}(s)$,  then 
\begin{equation}\label{est:In03}
I(s) \leq C_0e^{-\frac s2}s^{C_0} \quad \text{for some}\;\; C_0 > 0.
\end{equation}
\end{itemize}
\end{proposition}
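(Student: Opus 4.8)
The plan is to study the long-time behaviour of the coupled system of differential inequalities for $I(s)$, $l_n(s)$ and $g_\beta(s)$ assembled in Lemma~\ref{lemm:evoIlr}, following the dynamical-system strategy of \cite{FZnon00} (the one-dimensional case being already in \cite{ZAAcmp02}). The spectrum $\lambda_n=1-\tfrac n2$ of $\Lc$ splits the modes into expanding ($n=0,1$), neutral ($n=2$) and contracting ($n\ge 3$) ones, and the aim is to show that $g$ is eventually governed by a single eigenmode, unless $I(s)$ decays faster than every exponential $e^{(1-n/2)s}$. I would begin by slaving the expanding modes: since \eqref{equ:ga} forces $I(s)\le C\log s/s^2\to 0$, integrating part (i) of Lemma~\ref{lemm:evoIlr} from $+\infty$ for $n=0,1$ (using $\lambda_0=1$, $\lambda_1=\tfrac12>0$ and $l_0,l_1\to 0$) yields $l_0(s),l_1(s)=\mathcal{O}(I(s)/s)$.

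The heart of the proof is the following dichotomy: either $l_n(s)=\mathcal{O}(I(s)/s)$ for every $n$, or there is a single index $n_0$ for which this fails. In the first alternative, for any fixed $n$ I apply part (ii) of Lemma~\ref{lemm:evoIlr} at level $n-1$: since $l_0,\dots,l_{n-1}=\mathcal{O}(I/s)$ the finite sum is absorbed into the $C_0/s$ term and $I'(s)\le(1-\tfrac n2+C/s)I(s)$, whose integration produces $I(s)\le C_n s^{C_n'}e^{(1-n/2)s}$ for all $n$; this is case $i)$. In the second alternative, one must rule out a persistent competition between two modes of different eigenvalues: by part (i), a mode $l_n$ that is not slaved behaves to leading order like $e^{(1-n/2)s}$, so two such modes with $n_1<n_2$ would satisfy $l_{n_1}/l_{n_2}\to\infty$ and could not both be comparable to $I$. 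Making this comparison rigorous — the coupling through the $\mathcal{O}(I/s)$ remainders, controlled with the delay estimate (iv), genuinely mixes the modes — is precisely the delicate point carried out in \cite{FZnon00}, and it is the main obstacle of the proof. It forces a unique dominant index $n_0$ with $I(s)\sim l_{n_0}(s)$ and $l_n=\mathcal{O}(I/s)$ for $n\ne n_0$; since the modes $n=0,1$ are already slaved, necessarily $n_0\ge 2$.

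It remains to refine the behaviour inside the dominant eigenspace. If $n_0=2$, then $-1+|\beta|/2=0$ for $|\beta|=2$, so part (iii) of Lemma~\ref{lemm:evoIlr} reduces to a scalar equation for $g_\beta$ with relaxation rate $\tfrac1s\sum_{i=1}^{\ell}\beta_i$ and right-hand side $\mathcal{O}(\log s/s^{7/2})$, using the slaving of $l_0,l_4$ together with $I\sim l_2\le C\log s/s^2$. Solving by variation of constants: when $\sum_{i=1}^{\ell}\beta_i=2$ the integrating factor is $s^2$, so $s^2g_\beta$ converges to a constant $c_\beta$ with remainder $\mathcal{O}(\log s/s^{1/2})$; when $\sum_{i=1}^{\ell}\beta_i\in\{0,1\}$ the integrating factor is $1$ or $s$, and since $g_\beta\le l_2\le C\log s/s^2$ the limiting constant must vanish, which yields $|g_\beta|\le C\log s/s^{5/2}$. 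Together these give \eqref{equ:ODEbetaa2}. If $n_0\ge 3$, then $I\sim l_{n_0}$ and part (i) of Lemma~\ref{lemm:evoIlr} reads $l_{n_0}'+(\tfrac{n_0}2-1)l_{n_0}=\mathcal{O}(l_{n_0}/s)$ with $\tfrac{n_0}2-1\ge\tfrac12$; integrating the logarithmic derivative gives $I(s)\sim l_{n_0}(s)\le C_0e^{-s/2}s^{C_0}$, which is \eqref{est:In03} (larger $n_0$ decay even faster and are covered a fortiori). This completes the proof of Proposition~\ref{prop:dom}.
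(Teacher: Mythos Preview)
Your proposal is correct and follows essentially the same route as the paper: both defer the existence and uniqueness of a dominating mode to Proposition~2.6 of \cite{FZnon00} via parts (i)--(ii) of Lemma~\ref{lemm:evoIlr}, then use part (iii) to integrate the scalar ODE for $g_\beta$ when $n_0=2$, and part (i) to get the exponential bound when $n_0\ge 3$. Your write-up actually spells out the integrating-factor computation for \eqref{equ:ODEbetaa2} in more detail than the paper does (the paper simply says ``estimate \eqref{equ:ODEbetaa2} follows after integration''); the only cosmetic point is that your initial slaving of $l_0,l_1$ by ``integrating from $+\infty$'' is not self-contained without some a priori control on $I$, but since you --- like the paper --- explicitly attribute this step to \cite{FZnon00}, there is no gap.
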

\begin{proof} See Proposition 2.6, page 1196 in \cite{FZnon00} for the existence of a dominating component, where the proof relies on $(i)$ and $(ii)$ of Lemma \ref{lemm:evoIlr}. If case $(ii)$ occurs with $n_0 = 2$, we write from $(iii)$ of Lemma \ref{lemm:evoIlr}: for all $\beta \in \mathbb{N}^N$ with $|\beta| = 2$,
$$\left|g'_{\beta}(s) + \frac{g_{\beta}}{s}\sum_{i = 1}^{\ell}\beta_i\right| \leq C(\beta) \left(\dfrac{I(s)}{s^{\frac 32}}+ \dfrac{l_{0}(s) + l_{4}(s)}{s}\right) \leq C(\beta)\dfrac{I(s)}{s^{\frac 32}} \leq C(\beta)\frac{\log s}{s^{7/2}},$$
where we used \eqref{eq:Iln0} and \eqref{equ:ga} from which $l_{0}(s) + l_{4}(s) = \mathcal{O}\left(\frac{I(s)}{s}\right)$ and $I(s) = \mathcal{O}\left(\frac{\log s}{s^2}\right)$. Since $\sum\limits_{i = 1}^{\ell} \beta_i$ is only equal to $0, 1$ or $2$ if $|\beta| = 2$, estimate \eqref{equ:ODEbetaa2} follows after integration. Estimate \eqref{est:In03} immediately follows from $(i)$ of Lemma \ref{lemm:evoIlr}. This ends the proof of Proposition \ref{prop:dom}. 
\end{proof}

Let us now derive Proposition \ref{prop:class} from Proposition \ref{prop:dom}. Indeed, we see from Proposition \ref{prop:dom} that if case $i)$ occurs, we already have exponential decay for $I(s)$. If case $(ii)$ occurs with $n_0 \geq 3$, we write from part $(i)$ of Lemma \ref{lemm:evoIlr}, 
$$\left| l'_{n_0}(s) + \left(\frac{n_0}{2} - 1\right)l_{n_0}\right| \leq \frac{C}{s}l_{n_0}.$$
Since $l_{n_0} \ne 0$ in a neighbourhood of infinity, this gives 
$$l_{n_0}(s) \leq C_0 s^{C_0}e^{\left(1 - \frac{n_0}{2}\right)s} \leq C_0s^{C_0} e^{-\frac{s}{2}},$$
which yields \eqref{equ:case2class}. If case $(ii)$ occurs with $n_0 = 2$, by definition of $P_2$, we derive from \eqref{equ:ODEbetaa2} that there is a symmetric, real $(\ell \times \ell)$-matrix $\mathcal{B}$ such that
$$P_2g(y,s) = \frac{1}{s^2}\left(\frac{1}{2}\bar{y}^T\mathcal{B}\bar{y} - tr(\mathcal{B})\right) + o\left(\frac{1}{s^2}\right),$$
which is \eqref{equ:case1class}. This concludes the proof of Proposition \ref{prop:class}.\hfill $\square$

\subsection{$\mathcal{C}^{1,\frac{1}{2} - \eta}$-regularity of the blow-up set.} \label{sec:C1alpha}
We give the proof of Proposition \ref{prop:C1alph} in this section. The proof uses the argument given in \cite{ZAAcmp02} treated for the case $\ell = 1$. Here we shall exploit the refined estimate \eqref{est:beyLogT_t} to obtain a geometric constraint on the blow-up set. Without loss of generality, we assume $\hat{a} = 0$ and $Q_{\hat{a}} = Id$. Under the hypotheses of Proposition \ref{prop:C1alph}, we know that $\gamma \in \mathcal{C}^1((-\delta_1, \delta_1)^{N - \ell}, \mathbb{R}^\ell)$ with $\ell \in \{1, \cdots, N-1\}$. If we introduce
$$\Gamma(\tilde{x}) = (\gamma_1(\tilde{x}), \cdots, \gamma_\ell(\tilde{x}),\tilde{x}), \quad \tilde{x} = (x_{\ell + 1}, \cdots, x_N),$$
then 
$$\text{Im}\,\Gamma \cap B(0, 2\delta) = \text{graph}(\gamma) \cap B(0, 2\delta) = S_\delta.$$
Consider $\tilde{x}$ and $\tilde{h}$ in $\mathbb{R}^{N-\ell}$ such that $\tilde{x}$ as well as $\tilde{x} + \tilde{h}$ are in $B(0,\delta_1)$ and $\Gamma(\tilde{x})$ as well as $\Gamma(\tilde{x} + \tilde{h})$ are in $S_\delta$. For all $t\in [T-e^{-s_0 - 1}, T)$ such that $|\Gamma(\tilde{x}) - \Gamma(\tilde{x} + \tilde{h})| \leq \sqrt{(T-t)|\log (T-t)|}$, we use \eqref{est:beyLogT_t} with $x = a = \Gamma(\tilde{x} + \tilde{h})$, then with $x = \Gamma(\tilde{x} + \tilde{h})$ and $a = \Gamma(\tilde{x})$) to find that
\begin{equation}\label{est:GxGh1}
\left\{
\begin{array}{cl}
\left|(T-t)^\frac{1}{p-1}u(\Gamma(\tilde{x} + \tilde{h}),t) - w_{\mathcal{B}(\Gamma(\tilde{x} + \tilde{h}))}(0, s)\right|& \leq Ce^{-\frac s2}s^{\frac 32 + C_0},\\
\left|(T-t)^\frac{1}{p-1}u(\Gamma(\tilde{x} + \tilde{h}),t) - w_{\mathcal{B}(\Gamma(\tilde{x}))}(\bar{y}_{\Gamma(\tilde{x}), \Gamma(\tilde{x} + \tilde{h}),s}, s)\right|& \leq Ce^{-\frac s2}s^{\frac 32 + C_0},
\end{array}
\right.
\end{equation}
where $\bar{y}_{\Gamma(\tilde{x}), \Gamma(\tilde{x} + \tilde{h}),s}$ is defined as
\begin{equation}\label{def:barya1a2}
\bar{y}_{a_1, a_2,s} = e^{\frac s2}\big((a_1 - a_2)\cdot Q_{a_1}e_1, \cdots, (a_1 - a_2)\cdot Q_{a_1}e_\ell\big).
\end{equation}
Since $\Gamma$ is $\mathcal{C}^1$, we have 
$$|\Gamma(\tilde{x} + \tilde{h}) - \Gamma(\tilde{x})| \leq C|\tilde{h}|.$$
Let us fix $t = \tilde{t}(\tilde{x}, \tilde{h})$ such that 
\begin{equation}\label{rel:tildet}
|\Gamma(\tilde{x} + \tilde{h}) - \Gamma(\tilde{x})| = \sqrt{(T-\tilde{t})|\log (T - \tilde{t})|},
\end{equation}
and take $\tilde{h} \in B_{N-\ell}(0, h_1(s_0))$ for some $h_1(s_0)>0$, then we have $\tilde{t} \geq T - e^{-s_0-1}$. Hence, if $\tilde{s} = - \log(T-\tilde{t})$, we have by \eqref{est:GxGh1},
\begin{equation}\label{est:ID1}
\left|w_{\mathcal{B}(\Gamma(\tilde{x} + \tilde{h}))}(0, \tilde{s}) -  w_{\mathcal{B}(\Gamma(\tilde{x}))}(\bar{y}_{\Gamma(\tilde{x}), \Gamma(\tilde{x} + \tilde{h}), \tilde{s}}, \tilde{s})\right| \leq Ce^{-\frac{\tilde{s}}{2}}\tilde{s}^{\frac 32 + C_0}.
\end{equation}
Similarly, by changing the roles of $\tilde{x}$ and $\tilde{x} + \tilde{h}$, we get
\begin{equation}\label{est:ID2}
\left|w_{\mathcal{B}(\Gamma(\tilde{x}))}(0, \tilde{s}) -  w_{\mathcal{B}(\Gamma(\tilde{x} + \tilde{h}))}(\bar{y}_{\Gamma(\tilde{x} + \tilde{h}), \Gamma(\tilde{x}),\tilde{s}}, \tilde{s})\right| \leq Ce^{-\frac{\tilde{s}}{2}}\tilde{s}^{\frac 32 + C_0},
\end{equation}
where $\bar{y}_{\Gamma(\tilde{x} + \tilde{h}), \Gamma(\tilde{x}),\tilde{s}}$ is defined as in \eqref{def:barya1a2}.\\

From a Taylor expansion for $w_{\mathcal{B}}(\bar{y},\tilde{s})$ near $\bar y = 0$, we write
\begin{equation}\label{eq:TwB}
w_{\mathcal{B}}(\bar{y}, \tilde{s}) = w_{\mathcal{B}}(0, \tilde{s}) + \bar{y}\cdot \nabla w_{\mathcal{B}}(0, \tilde{s}) + \frac{1}{2}\bar{y}^T\nabla^2w_{\mathcal{B}}(0, \tilde{s})\bar{y} + \mathcal{O}\left(|\bar{y}|^3 |\nabla^3 w_{\mathcal{B}}(z, \tilde{s})|\right),
\end{equation}
for some $z$ between $0$ and  $\bar{y}$. 

\noindent Since \eqref{est:what} and \eqref{est:whatwA} also hold in $\mathcal{C}_{loc}^k$ by parabolic regularity, we deduce that
$$|\nabla w_{\mathcal{B}}(0,\tilde{s})| = \mathcal{O}\left(\frac{\log \tilde{s}}{\tilde{s}^2}\right), \quad \nabla^2 w_{\mathcal{B}}(0, \tilde{s}) = - \frac{\kappa}{4p \tilde{s}}I_{\ell \times \ell} + \mathcal{O}\left(\frac{\log \tilde{s}}{\tilde{s}^2}\right).$$
From \cite{MZgfa98} (see Theorem 1), we know that $\|\nabla^3 w_{\mathcal{B}}(\tilde{s})\|_{L^\infty} \leq \frac{C_3}{\tilde{s}^{\frac 32}}$. Substituting all these above estimates into \eqref{eq:TwB} yields
$$w_{\mathcal{B}}(\bar{y}, \tilde{s}) \leq w_{\mathcal{B}}(0, \tilde{s}) - \frac{\kappa}{8p \tilde{s}}|\bar{y}|^2 + \frac{C_3|\bar{y}|^3}{6 \tilde{s}^{\frac 32}} + \frac{C\log \tilde{s}}{\tilde{s}^2}.$$
Therefore, we have
\begin{equation}\label{est:ID5}
 \forall |\bar{y}| \leq \frac{3\kappa}{8C_3p}\sqrt{\tilde{s}}, \quad w_{\mathcal{B}}(\bar{y}, \tilde{s}) \leq w_{\mathcal{B}}(0, \tilde{s}) - \frac{\kappa}{16p \tilde{s}}|\bar{y}|^2.
\end{equation}

We claim from \eqref{est:ID1}, \eqref{est:ID2} and \eqref{est:ID5} the following:
\begin{equation}\label{est:ID3}
\left|w_{\mathcal{B}(\Gamma(\tilde{x}))}(0, \tilde{s}) -  w_{\mathcal{B}(\Gamma(\tilde{x} + \tilde{h}))}(0, \tilde{s})\right| \leq Ce^{-\frac{\tilde{s}}{2}}\tilde{s}^{\frac 32 + C_0},
\end{equation}
Indeed, if $w_{\mathcal{B}(\Gamma(\tilde{x}))}(0, \tilde{s}) -  w_{\mathcal{B}(\Gamma(\tilde{x} + \tilde{h}))}(0, \tilde{s}) \geq 0$, then we have by \eqref{est:ID5} and \eqref{est:ID2},
\begin{align*}
0 &\leq w_{\mathcal{B}(\Gamma(\tilde{x}))}(0, \tilde{s}) -  w_{\mathcal{B}(\Gamma(\tilde{x} + \tilde{h}))}(0, \tilde{s})\\
&\leq w_{\mathcal{B}(\Gamma(\tilde{x}))}(0, \tilde{s}) - w_{\mathcal{B}(\Gamma(\tilde{x} + \tilde{h}))}(\bar{y}_{\Gamma(\tilde{x} + \tilde{h}), \Gamma(\tilde{x}),\tilde{s}}, \tilde{s}) \leq Ce^{-\frac{\tilde{s}}{2}}\tilde{s}^{\frac 32 + C_0}.
\end{align*}
If $w_{\mathcal{B}(\Gamma(\tilde{x}))}(0, \tilde{s}) -  w_{\mathcal{B}(\Gamma(\tilde{x} + \tilde{h}))}(0, \tilde{s}) \leq 0$, then we do as above and use \eqref{est:ID1} instead of \eqref{est:ID2} to obtain \eqref{est:ID3}.\\
From \eqref{est:ID3}, \eqref{est:ID1} and \eqref{est:ID5}, we get 
\begin{equation*}
\frac{\kappa}{16p \tilde{s}}|\bar{y}_{\Gamma(\tilde{x}), \Gamma(\tilde{x} + \tilde{h}), \tilde{s}}|^2 \leq w_{\mathcal{B}(\Gamma(\tilde{x}))}(0, \tilde{s}) - w_{\mathcal{B}(\Gamma(\tilde{x}))}(\bar{y}_{\Gamma(\tilde{x}), \Gamma(\tilde{x} + \tilde{h}), \tilde{s}}, \tilde{s}) \leq Ce^{-\frac{\tilde{s}}{2}}\tilde{s}^{\frac 32 + C_0}.
\end{equation*}
Hence, we obtain 
\begin{equation}\label{est:ID6}
|\bar{y}_{\Gamma(\tilde{x}), \Gamma(\tilde{x} + \tilde{h}), \tilde{s}}|^2 \leq Ce^{-\frac{\tilde{s}}{2}}\tilde{s}^{\frac 52 + C_0}.
\end{equation}
From the definition \eqref{def:barya1a2}, we have 
\begin{equation}\label{est:ID7}
|\bar{y}_{\Gamma(\tilde{x}), \Gamma(\tilde{x} + \tilde{h}), \tilde{s}}| = e^{\frac{\tilde{s}}{2}}d(\Gamma(\tilde{x}), \pi_{\Gamma(\tilde{x} + \tilde{h})}),
\end{equation}
where we recall $\pi_{\Gamma(\tilde{x} + \tilde{h})}$ is the tangent plan of $S$ at $\Gamma(\tilde{x} + \tilde{h})$. In the other hand, we claim that
\begin{equation}\label{est:ID8}
d(\Gamma(\tilde{x}), T_{\Gamma(\tilde{x} + \tilde{h})}) \geq \frac{|\gamma_i(\tilde{x} + \tilde{h}) - \gamma_i(\tilde{x}) - \tilde{h}\cdot \nabla \gamma_i(\tilde{x})|}{\sqrt{1 + |\nabla \gamma_i(\tilde{x})|^2}},
\end{equation}
where $S_i$ is the surface of equation $x_i = \gamma_i(\tilde{x})$,  $T_{i, \Gamma(\tilde{x} + \tilde{h})}$ is the tangent plan of $S_i$ at $\Gamma(\tilde{x} + \tilde{h})$. Indeed, we note that 
$$d(\Gamma(\tilde{x}), T_{i,\Gamma(\tilde{x} + \tilde{h})}) = \frac{|\gamma_i(\tilde{x} + \tilde{h}) - \gamma_i(\tilde{x}) - \tilde{h}\cdot \nabla \gamma_i(\tilde{x})|}{\sqrt{1 + |\nabla \gamma_i(\tilde{x})|^2}},$$
and $Im\, \Gamma \subset S_i$, hence, \eqref{est:ID8} follows from $d(\Gamma(\tilde{x}), T_{\Gamma(\tilde{x} + \tilde{h})}) \geq d(\Gamma(\tilde{x}), T_{i,\Gamma(\tilde{x} + \tilde{h})})$.

Combining \eqref{est:ID6}, \eqref{est:ID7}, \eqref{est:ID8} together with the relation $\tilde{s} = -\log (T - \tilde{t})$ yields
$$|\gamma_i(\tilde{x} + \tilde{h}) - \gamma_i(\tilde{x}) - \tilde{h}\cdot \nabla \gamma_i(\tilde{x})|^2 \leq C(T - \tilde{t})^{\frac 32}|\log (T - \tilde{t})|^{\frac 52 + C_0}.$$
If we denote $A = |\Gamma(\tilde{x} + \tilde{h}) - \Gamma(\tilde{x})| \leq C|\tilde{h}|$, then we have  by  the relation \eqref{rel:tildet},
$$ |\log (T - \tilde{t})| \sim 2 |\log A|, \quad T - \tilde{t} \sim \frac{A^2}{2|\log A|} \quad \text{as} \quad A \to 0.$$
Hence,
$$|\gamma_i(\tilde{x} + \tilde{h}) - \gamma_i(\tilde{x}) - \tilde{h}\cdot \nabla \gamma_i(\tilde{x})|^2 \leq C A^3 |\log A|^{1 + C_0} \leq C |\tilde{h}|^3 |\log |\tilde{h}||^{1 + C_0},
$$
which yields \eqref{est:propC12al}. This concludes the proof of Proposition \ref{prop:C1alph}.\hfill $\square$

\subsection{A geometric constraint linking the blow-up behavior of the solution to the regularity of the blow-up set.} \label{sec:ap1}
This section is devoted to the proof of Proposition \ref{prop:geocon}. The proof follows ideas given in \cite{ZAAdm06}. Recall from the hypothesis that $\gamma_a \in \mathcal{C}^{1, \alpha^*}((-\epsilon_a, \epsilon_a)^{N-\ell}, \mathbb{R}^\ell)$ for some $\alpha^* \in (0, \frac 12)$ and $\epsilon_a > 0$, and that $\gamma_{a,i}(0) = \nabla \gamma_{a,i}(0) = 0$, we have for all $|\tilde{\xi}| < \epsilon_a$, 
\begin{equation}\label{est:varphiC1alp}
|\gamma_{a,i}(\tilde{\xi})| \leq C|\tilde{\xi}|^{1 + \alpha^*} \quad \text{and} \quad |\nabla \gamma_{a,i}(\tilde{\xi})| \leq C|\tilde{\xi}|^{\alpha^*}.
\end{equation}
In what follows, $k \in \{\ell + 1, \cdots, N\}$ is fixed, and we use indexes $i$ and $m$ for the range $1, \cdots, \ell$, index $j$ for the range $\ell +1, \cdots, N$. 

We now use \eqref{est:varphiC1alp} to approximate all the terms appearing in \eqref{equ:diffwa}.

$(a)$ Term $\tau_k(a) \cdot \eta_i(b)$. From the local coordinates \eqref{def:point_b}, we have 
\begin{equation*}
\eta_i(b) = \frac{1}{\sqrt{1 + |\nabla \gamma_{a,i}(e^{-\frac s2} \tilde{y})|^2}}\left(\eta_i(a) - \sum_{j = \ell + 1}^N \frac{\partial \gamma_{a,i}}{\partial \xi_{j}}(e^{-\frac s2}\tilde{y}) \tau_j(a)\right).
\end{equation*}
Using \eqref{est:varphiC1alp} and the fact that $\tau_k(a)\cdot \eta_i(a) = 0$ and $\tau_k(a)\cdot \tau_j(a) = \delta_{k,j}$, we obtain
\begin{align}
\left|\tau_k(a)\cdot \eta_i(b) + \frac{\partial \gamma_{a,i}}{\partial \xi_{k}}(e^{-\frac s2}\tilde{y})\right| & = \left| \left(1 -\frac{1}{\sqrt{1 + |\nabla \gamma_{a,i}(e^{-\frac s2} \tilde{y})|^2}}\right) \frac{\partial \gamma_{a,i}}{\partial \xi_{k}}(e^{-\frac s2}\tilde{y})\right| \nonumber\\
& \leq  \left|\frac{\partial \gamma_{a,i}}{\partial \xi_{k}}(e^{-\frac s2}\tilde{y})\right| |\nabla \gamma_{a,i}(e^{-\frac s2}\tilde{y})|^2 \nonumber\\
& \leq \left|\frac{\partial \gamma_{a,i}}{\partial \xi_{k}}(e^{-\frac s2}\tilde{y})\right| e^{-\alpha^*s}.\label{est:tauk_etai}
\end{align}

$(b)$ Term $\tau_k(a)\cdot \tau_j(b)$. From \eqref{def:point_b} and \eqref{est:varphiC1alp}, we have 
$$|b - a| \leq \left|\sum_{i = 1}^\ell \gamma_{a,i}(e^{-\frac s2}\tilde{y})\right| + e^{-\frac s2}|\tilde{y}| \leq C e^{-\frac s2}.$$
Since $\eta_i$ and $\tau_j$ are $\mathcal{C}^{\alpha^*}$, it holds that 
$$|\eta_i(a) - \eta_i(b)| + |\tau_j(a) - \tau_j(b)| \leq C|a - b|^{\alpha^*} \leq C e^{-\alpha^* \frac s2}.$$
This follows that 
\begin{equation}\label{est:difftangnor}
\begin{array}{c}
|\eta_i(a)\cdot\eta_m(b) - \delta_{i,m}| + |\tau_k(a)\cdot\tau_j(b) - \delta_{k,j}| \leq Ce^{-\alpha^*\frac s2},\\
|\eta_i(a)\cdot \tau_j(b)| + |\eta_i(b)\cdot \tau_j(a)| \leq Ce^{-\alpha^*\frac s2}.
\end{array}
\end{equation}

$(c)$ The point $Y(a, y, s)$. Using \eqref{def:nitk}, \eqref{equ:wa_wb} and \eqref{def:point_b}, we write
\begin{align*}
Y_m &= Y\cdot e_m = \big(Q_ay + e^{\frac s2}(a - b) \big)\cdot Q_be_m\\
& = \left\{\sum_{i = 1}^\ell y_i \eta_i(a) + \sum_{j = \ell +1}^Ny_j \tau_j(a) - e^{\frac s2} \left[\sum_{i = 1}^\ell \gamma_{a,i}(e^{-\frac s2}\tilde{y})\eta_i(a) + \sum_{j = \ell + 1}^Ne^{-\frac s2}y_j \tau_j(a)\right] \right\}\cdot Q_be_m\\
& = \left\{\sum_{i = 1}^\ell \big[y_i - e^{\frac s2}\gamma_{a,i}(e^{-\frac s2}\tilde{y})\big]\eta_i(a)\right\}\cdot Q_be_m.
\end{align*}
From \eqref{def:nitk}, we write for $m \in \{1, \cdots, \ell\}$,
\begin{align*}
Y_m - y_m &= \left\{ \left(y_m - e^{\frac s2}\gamma_{a,m}(e^{-\frac s2}\tilde{y})\right)\eta_m(a)\cdot \eta_m(b) - y_m \eta_m(a)\cdot \eta_m(a)\right\}\\
& + \sum_{i = 1, i \ne m}^\ell \left(y_i - e^{\frac s2}\gamma_{a,i}(e^{-\frac s2}\tilde{y})\right)\eta_i(a)\cdot \eta_m(b),
\end{align*}
and for $n \in \{\ell+1, \cdots, N\}$,
$$Y_n = \sum_{i = 1}^\ell \left(y_i - e^{\frac s2}\gamma_{a,i}(e^{-\frac s2}\tilde{y})\right)\eta_i(a)\cdot \tau_n(b).$$
Using \eqref{est:difftangnor} yields
$$|Y_m - y_m| \leq Ce^{-\alpha^*\frac s2} \quad \text{and}\quad |Y_k| \leq Ce^{-\alpha^*\frac s2}.$$
Hence, if we write 
$$\bar{Y} = (Y_1, \cdots, Y_\ell)\quad \text{and} \quad \tilde{Y} = (Y_{\ell +1}, \cdots, Y_N),$$
then
\begin{equation}\label{est:Yhat_yhat}
|\bar{y} - \bar{Y}| \leq Ce^{-\alpha^*\frac s2} \quad \text{and}\quad |\tilde{Y}| \leq Ce^{-\alpha^*\frac s2}.
\end{equation}

$(d)$ Term $\frac{\partial w_b}{\partial y_i}(Y,s)$. From Proposition \ref{prop:expodecay} and the parabolic regularity, we have that
\begin{equation}\label{est:wa_WAexpoW2loc}
\sup_{s \geq s'}\left\|w_b(y,s) - w_{\mathcal{B}(b)}(\bar{y},s)\right\|_{W_{loc}^{2,\infty}(|\bar{y}| < 2)} \leq Ce^{-\frac s2}s^{C_0}.
\end{equation}
This implies 
\begin{align}
&\left|\frac{\partial w_b}{\partial y_i}(Y,s) - \frac{\partial w_{\mathcal{B}(b)}}{\partial y_i}(\bar{y},s)\right| + \sum_{m = \ell + 1}^N \left|\frac{\partial w_b}{\partial y_m}(Y,s)\right|\nonumber\\
& + \sup_{|z| < 2, (m,n) \ne (i,i), i \geq \ell + 1} \left| \frac{\partial^2 w_b}{\partial y_m \partial y_n}(z,s)\right| \leq Ce^{-\frac s2}s^{C_0}.\label{est:wbwAbC2}
\end{align}
Similarly, from \eqref{est:Wa} and \eqref{def:wabyWa}, 
\begin{equation}\label{est:w_alogW2loc}
\sup_{s \geq -\log T}\left\|w_a(y,s) - \left\{\kappa + \frac{\kappa}{2ps}\left(\ell - \frac{|\bar{y}|^2}{2}\right)\right\}\right\|_{W_{loc}^{2,\infty}(|\bar{y}| < 2)} \leq C\frac{\log s}{s^2}.
\end{equation}
From \eqref{est:wa_WAexpoW2loc} and \eqref{est:w_alogW2loc}, we deduce that
\begin{equation}\label{est:WAlogW2loc}
\sup_{s \geq s''}\left\|w_{\mathcal{B}(a)}(y,s) - \left\{\kappa + \frac{\kappa}{2ps}\left(\ell - \frac{|\bar{y}|^2}{2}\right)\right\}\right\|_{W_{loc}^{2,\infty}(|\bar{y}| < 2)} \leq C\frac{\log s}{s^2}.
\end{equation}
Using \eqref{est:WAlogW2loc}, we have for $|z| \leq 2$,
$$\left|\frac{\partial^2w_{\mathcal{B}(b)}}{\partial y_i^2}(z,s) + \frac{\kappa}{2ps}\right| \leq C\frac{\log s}{s^2} \quad \text{and} \quad \left|\frac{\partial^2w_{\mathcal{B}(b)}}{\partial y_i \partial y_m}(z,s)\right| \leq C\frac{\log s}{s^2}, \;\; m \ne i.$$
Note that $\frac{\partial w_{\mathcal{B}(b)}}{\partial y_i}(0,s) = 0$,  we then take the Taylor expansion of $\frac{\partial w_{\mathcal{B}(b)}}{\partial y_i}(\bar{y},s)$ near $\bar{y} = 0$ up to the first order to get 
$$\left|\frac{\partial w_{\mathcal{B}(b)}}{\partial y_i}(\bar{y},s) + Y_i\frac{\kappa}{2ps}\right| \leq C|\bar{y}|\frac{\log s}{s^2}.$$
Using \eqref{est:wbwAbC2} and \eqref{est:Yhat_yhat} yields
\begin{equation}\label{est:parwbyi}
\left|\frac{\partial w_b}{\partial y_i}(Y,s) + y_i\frac{\kappa}{2ps}\right| \leq Ce^{-\frac s2}s^{C_0} + C|\bar{y}|\frac{\log s}{s^2} + \frac{C}{s}e^{-\alpha^*\frac s2}.
\end{equation}

$(e)$ Term $\frac{\partial w_b}{\partial y_j}(Y,s)$. We just use \eqref{est:wbwAbC2} and \eqref{est:Yhat_yhat} to get
\begin{equation}\label{est:parwbyj}
\left|\frac{\partial w_b}{\partial y_j}(Y,s) - \frac{\partial w_b}{\partial y_j}(\bar{y},0, \cdots, 0, s)\right| \leq Ce^{-(1 + \alpha^*)\frac s2}.
\end{equation}

Estimate \eqref{est:geoconst} then follows by substituting \eqref{est:parwbyj}, \eqref{est:parwbyi},  \eqref{est:wbwAbC2}, \eqref{est:tauk_etai} and \eqref{est:difftangnor} into \eqref{equ:diffwa}. This concludes the proof of Proposition \ref{prop:geocon}.\hfill $\square$

\subsection{Further refined asymptotic behavior.}\label{sec:futre}
We prove Proposition \ref{prop:furRe} in this subsection. We first refine estimate \eqref{est:expo_s} and find following terms in the expansion which is of order $e^{-\frac s2}$. Using the geometric constraint, we show that all terms of order $e^{-\frac s2}$ must be identically zero, which gives a better estimate for $\|W_a(Q_ay,s) - w_{\mathcal{B}(a)}(\bar{y}, s)\|_{L^2_\rho}$. We then repeat the process and use again Proposition \ref{prop:geocon} in order to get the term of order $\frac 1s e^{-\frac s2}$ and conclude the proof of Proposition \ref{prop:furRe}. 

Let us define 
\begin{equation}\label{def:gaWawB}
g_a(y,s) = W_a(Q_ay,s) - w_{\mathcal{B}(a)}(\bar{y},s),
\end{equation}
and denote by 
$$I_a(s)^2 = \|g_a(s)\|_{L^2_\rho}^2, \quad l_{a,n}(s) = \|P_n g_a(s)\|_{L^2_\rho}, \quad r_{a,k}(s) = \|\sum_{n \geq k} P_n g_a(s)\|_{L^2_\rho}.$$
From \eqref{est:expo_s}, we have
\begin{equation}\label{est:Iarecal}
I_a(s) = \mathcal{O} \left(e^{-\frac s2}s^{\mu}\right) \;\; \text{for some}\;\; \mu >0.
\end{equation}
Note that Lemma \ref{lemm:evoIlr} also holds with $W_1 = W_a$ and $W_2 = w_{\mathcal{B}}$. We claim the following:
\begin{lemma}\label{lemm:equbeta3} Assume that $I_a(s) = \mathcal{O} \left(e^{-\frac s2}s^{\mu_0}\right)$ for some $\mu_0 \in \mathbb{R}$. There exists $s_4 > 0$ such that for all $s \geq s_4$,
\begin{equation}\label{est:l012r4}
\sum_{n = 0}^2 l_{a,n}(s) + r_{a,4}(s) \leq Ce^{-\frac s2}s^{\mu_0 - 1},
\end{equation}
and 
\begin{equation}\label{equ:beta3}
\forall \beta \in \mathbb{N}^N, \; |\beta| = 3, \quad \left|\frac{d}{ds} \left(g_{a, \beta}(s)e^{\frac s2}s^{|\bar{\beta}|}\right)\right| \leq C s^{|\bar{\beta}| + \mu_0 - \frac 32},
\end{equation}
where $\bar{\beta} = (\beta_1, \cdots, \beta_{\ell})$, $|\bar{\beta}| = \sum \limits_{i=1}^{\ell}\beta_i$.
\end{lemma}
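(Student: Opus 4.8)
The plan is to read off both estimates from the mode-by-mode differential inequalities of Lemma~\ref{lemm:evoIlr}, which apply verbatim to $g_a$ since $g_a = W_a(Q_a\cdot,s)-w_{\mathcal{B}(a)}$ solves the same linearized equation \eqref{equ:ga}. Recall that the spectrum of $\Lc$ is $\{\lambda_n = 1 - n/2\}$, so among the modes present in $g_a$ the neutral/unstable ones are $n=0,1,2$ (eigenvalues $1,\tfrac12,0$) and the stable tail is $n\ge 4$ (eigenvalues $\le -1$), while $n=3$ carries the expected dominant rate $e^{-s/2}$. The hypothesis $I_a(s)=\mathcal{O}(e^{-s/2}s^{\mu_0})$ (i.e.\ \eqref{est:Iarecal} with general $\mu_0$) says the whole of $g_a$ already decays at that rate; the content of \eqref{est:l012r4} is that the non-resonant modes are in fact a factor $s^{-1}$ smaller, and \eqref{equ:beta3} then isolates the precise evolution of the resonant $|\beta|=3$ modes. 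I would first establish \eqref{est:l012r4} and feed it into part (iii) of Lemma~\ref{lemm:evoIlr} to obtain \eqref{equ:beta3}.

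For the modes $n\in\{0,1,2\}$ I would use part (i) of Lemma~\ref{lemm:evoIlr}, namely $|l_{a,n}' - (1-\tfrac n2)l_{a,n}| \le C\,I_a(s)/s \le C e^{-s/2}s^{\mu_0-1}$. The point is that for $n\le 2$ these are neutral or linearly \emph{unstable} ODEs, so one cannot integrate forward; instead I integrate \emph{backward} from $+\infty$. Multiplying by the integrating factor $e^{-(1-n/2)s}$ and using that $e^{-(1-n/2)s}l_{a,n}(s)\le e^{-(1-n/2)s}I_a(s)\to 0$ as $s\to\infty$ (the boundary term vanishes because $1-\tfrac n2\ge 0$ while $I_a$ carries the extra factor $e^{-s/2}$), I obtain
$$e^{-(1-n/2)s}l_{a,n}(s)\le C\int_s^{\infty}e^{-(1-n/2)\sigma}e^{-\sigma/2}\sigma^{\mu_0-1}\,d\sigma \le Ce^{-(1-n/2)s}e^{-s/2}s^{\mu_0-1},$$
the last bound being the standard asymptotics of an incomplete-gamma integral with positive exponent $\tfrac32-\tfrac n2$. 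Hence $l_{a,n}(s)\le Ce^{-s/2}s^{\mu_0-1}$ for $n=0,1,2$.

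For the tail $r_{a,4}$ I would use part (iv) of Lemma~\ref{lemm:evoIlr}, $r_{a,4}'\le -r_{a,4} + \frac Cs I_a(s-s_*)$. Since $I_a(s-s_*)=\mathcal{O}(e^{-s/2}s^{\mu_0})$ (the shift only alters the constant), this is a \emph{stable} ODE with forcing $\mathcal{O}(e^{-s/2}s^{\mu_0-1})$; integrating forward with the factor $e^{s}$ from $s_4$ gives $e^{s}r_{a,4}(s)\le C + Ce^{s/2}s^{\mu_0-1}$, whence $r_{a,4}(s)\le Ce^{-s/2}s^{\mu_0-1}$ for $s$ large. Together with the previous step this proves \eqref{est:l012r4}.

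Finally, for $|\beta|=3$, the product rule gives
$$\frac{d}{ds}\!\left(g_{a,\beta}(s)e^{s/2}s^{|\bar{\beta}|}\right)=e^{s/2}s^{|\bar{\beta}|}\left(g_{a,\beta}' + \Big(\tfrac12 + \tfrac{|\bar{\beta}|}{s}\Big)g_{a,\beta}\right),$$
and the bracket is exactly the quantity controlled by part (iii) of Lemma~\ref{lemm:evoIlr} (here $-1+|\beta|/2=\tfrac12$ and $\tfrac1s\sum_{i\le\ell}\beta_i=\tfrac{|\bar{\beta}|}{s}$), which bounds it by $C\big(I_a/s^{3/2} + \tfrac1s(l_{a,1}+l_{a,5})\big)$. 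By the first two steps $l_{a,1}$ and $l_{a,5}\le r_{a,4}$ are $\mathcal{O}(e^{-s/2}s^{\mu_0-1})$, so the dominant term is $I_a/s^{3/2}=\mathcal{O}(e^{-s/2}s^{\mu_0-3/2})$; multiplying by $e^{s/2}s^{|\bar{\beta}|}$ yields \eqref{equ:beta3}. The only genuinely delicate point is the backward integration of the unstable modes $n=0,1$ and the neutral $n=2$: one must use the decay of $I_a$ at infinity to supply the terminal condition and to control the resulting incomplete-gamma integrals; everything else is bookkeeping with the spectral gap.
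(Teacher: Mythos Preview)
Your proof is correct and follows essentially the same approach as the paper: both read off the estimates from parts (i), (iv) and then (iii) of Lemma~\ref{lemm:evoIlr}, integrating the mode ODEs with the appropriate integrating factors. You are simply more explicit than the paper about the direction of integration (backward from $+\infty$ for the neutral/unstable modes $n=0,1,2$, using the decay of $I_a$ to kill the boundary term, and forward for the stable tail $r_{a,4}$), whereas the paper only says the bounds ``follow after integration''.
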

\begin{proof} From $(i)$ and $(iv)$ of Lemma \ref{lemm:evoIlr}, we write for all $s \geq s_3$, 
$$n =0, 1, 2, \;\;\left|\frac{d}{ds}\left(l_{a,n}(s)e^{(n/2-1)s}\right)\right| \leq Ce^{(n/2 - \frac 32)s} s^{\mu_0 - 1}, $$
and $$\left|\frac{d}{ds}\left(r_{a,4}(s)e^s\right)\right| \leq Ce^{\frac s2}s^{\mu_0 - 1}.$$
The estimate \eqref{est:l012r4} then follows after integration of the above inequalities . As for \eqref{equ:beta3}, we just use part $(iii)$ of Lemma \ref{lemm:evoIlr} and \eqref{est:l012r4} (note that $l_{a,5} \leq r_{a,4}$ by definition \eqref{def:lnrk}). This ends the proof of Lemma \ref{lemm:equbeta3}.
\end{proof}

Using \eqref{est:Iarecal} and applying Lemma \ref{lemm:equbeta3} a finite number of steps, we obtain the following:
\begin{lemma}\label{lemm:re_es12} There exist $s_5 > 0$ and continuous functions $a \to \lambda_\beta(a)$ for all $\beta \in \mathbb{N}^N$ with $|\beta| = 3$ and $|\bar{\beta}| = \sum\limits_{i = 1}^{\ell}\beta_i = 0$ such that for all $a \in S_\delta$ and $s \geq s_5$ , 
$$\left\|g_a(y,s) - e^{-\frac s2}\sum_{|\beta| = 3, |\bar{\beta}| = 0}\lambda_\beta(a)h_\beta(y) \right\|_{L^2_\rho} \leq Ce^{-\frac s2}s^{d - \frac 12},$$
for some $d \in \left(0, \frac 12\right)$, where  $h_\beta$ is defined by \eqref{def:hbeta}.
\end{lemma}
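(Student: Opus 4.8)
The plan is to iterate Lemma~\ref{lemm:equbeta3}, starting from the a priori bound \eqref{est:Iarecal}, so as to push the exponent of $s$ in the estimate for $I_a(s)$ below the threshold $\tfrac12$, and then to read off the limiting coefficients $\lambda_\beta(a)$ from the third mode. The guiding principle is that among all the modes of $g_a$ the eigenvalue $1-\tfrac n2$ of $\Lc$ equals $-\tfrac12$ precisely when $n=3$, so the eigenfunctions $h_\beta$ with $|\beta|=3$ are the only ones that can decay exactly like $e^{-\frac s2}$; by \eqref{est:l012r4} the modes with $n\le 2$ together with the remainder $r_{a,4}(s)$ already decay one power of $s$ faster, so the whole analysis reduces to controlling $l_{a,3}(s)$.

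First I set up the bootstrap. Suppose $I_a(s)=\mathcal{O}(e^{-\frac s2}s^{\mu_0})$ for some $\mu_0\ge\tfrac12$, uniformly in $a\in S_\delta$. For each $\beta$ with $|\beta|=3$, estimate \eqref{equ:beta3} says that $g_{a,\beta}(s)e^{\frac s2}s^{|\bar\beta|}$ — the quantity obtained from $g_{a,\beta}$ by the natural integrating factor for the modal ODE in part $(iii)$ of Lemma~\ref{lemm:evoIlr} — has derivative bounded by $Cs^{|\bar\beta|+\mu_0-\frac32}$. Since $\mu_0\ge\tfrac12$, the exponent $|\bar\beta|+\mu_0-\tfrac32\ge-1$ for every $|\bar\beta|\ge 0$, so integrating from $s_4$ gives $|g_{a,\beta}(s)e^{\frac s2}s^{|\bar\beta|}|\le Cs^{|\bar\beta|+\mu_0-\frac12}$ (a logarithm appears only in the borderline case $|\bar\beta|+\mu_0=\tfrac12$, which is harmless). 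Dividing by the integrating factor yields $|g_{a,\beta}(s)|\le Ce^{-\frac s2}s^{\mu_0-\frac12}$, hence $l_{a,3}(s)\le Ce^{-\frac s2}s^{\mu_0-\frac12}$, and together with \eqref{est:l012r4} this improves the bound to $I_a(s)=\mathcal{O}(e^{-\frac s2}s^{\mu_0-\frac12})$ (absorbing any logarithm into a slightly larger exponent). Each application of Lemma~\ref{lemm:equbeta3} therefore gains a factor $s^{-\frac12}$, so after finitely many steps the exponent can be taken to lie in $(0,\tfrac12)$; call it $d$.

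Next I extract the coefficients in the decisive step where $\mu_0=d<\tfrac12$. Now the exponent $|\bar\beta|+d-\tfrac32$ in \eqref{equ:beta3} is $<-1$ exactly when $|\bar\beta|=0$, so for those $\beta$ the integral converges and one may define $\lambda_\beta(a):=\lim_{s\to+\infty}g_{a,\beta}(s)e^{\frac s2}$, with $|g_{a,\beta}(s)e^{\frac s2}-\lambda_\beta(a)|\le Cs^{d-\frac12}$. For the remaining third-order modes $|\bar\beta|\ge 1$ the exponent is $\ge-\tfrac12>-1$, so they stay in the growing regime and, after dividing by the factor $s^{|\bar\beta|}\ge s$, satisfy $|g_{a,\beta}(s)|\le Ce^{-\frac s2}s^{d-\frac12}$; this is exactly where the weight $s^{-|\bar\beta|}$ coming from the potential (the term $\frac1s\sum_{i=1}^\ell\beta_i$ in part $(iii)$ of Lemma~\ref{lemm:evoIlr}) selects only $|\bar\beta|=0$ as genuinely of order $e^{-\frac s2}$. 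Collecting the three contributions — the $n=3$, $|\bar\beta|=0$ modes with their limits, the $n=3$, $|\bar\beta|\ge 1$ modes bounded by $e^{-\frac s2}s^{d-\frac12}$, and the modes $n\le 2$ together with $r_{a,4}$ bounded by $e^{-\frac s2}s^{d-1}$ via \eqref{est:l012r4} — and summing in $L^2_\rho$ gives the claimed estimate, the dominant error being $Ce^{-\frac s2}s^{d-\frac12}$. Finally, continuity of $a\mapsto\lambda_\beta(a)$ follows since each $g_{a,\beta}(s)e^{\frac s2}$ is continuous in $a$ (the map $a\mapsto W_a$ is continuous and $a\mapsto\mathcal{B}(a)$ is continuous by Corollary~\ref{coro:class}) and the convergence to $\lambda_\beta(a)$ is uniform in $a\in S_\delta$.

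The main obstacle I anticipate is bookkeeping rather than conceptual: keeping every constant uniform over $a\in S_\delta$ throughout the finitely many bootstrap steps, and correctly identifying the integrating factor $e^{\frac s2}s^{|\bar\beta|}$ so that the $s$-power weight sorts the third-order modes by $|\bar\beta|$. The conceptual crux — and the reason the sum in the statement is restricted to $|\bar\beta|=0$ — is precisely that the subprincipal term $\frac1s\sum_{i=1}^\ell\beta_i$ in the modal ODE shifts the decay of each $|\beta|=3$ mode by the extra power $s^{-|\bar\beta|}$, so that only the modes with no index in $\{1,\dots,\ell\}$ survive at the scale $e^{-\frac s2}$.
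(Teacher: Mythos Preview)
Your proposal is correct and follows essentially the same approach as the paper: iterate Lemma~\ref{lemm:equbeta3} to drive the exponent of $s$ below $\tfrac12$, then at the final step split the $|\beta|=3$ modes according to whether $|\bar\beta|=0$ (convergent integral, yielding the limits $\lambda_\beta(a)$) or $|\bar\beta|\ge 1$ (growing integral, yielding the extra $s^{-1/2}$ after dividing by $s^{|\bar\beta|}$). Your identification of the integrating factor $e^{s/2}s^{|\bar\beta|}$ and the resulting dichotomy is exactly the mechanism the paper uses; incidentally, the integration intervals you specify for the two cases agree with those the paper uses later in the proof of Proposition~\ref{prop:furRe}.
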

\begin{proof} We first show that there is $s_5 > 0$ such that
\begin{equation}\label{est:Iad12}
\forall s \geq s_5,\quad I_a(s) \leq Ce^{-\frac s2} s^d \quad \text{for some }\;\; d \in (0,\frac 12).
\end{equation}
From \eqref{est:Iarecal}, if $\mu \in (0, \frac 12)$, we are done. If $\mu \geq \frac 12$, we apply Lemma \ref{lemm:equbeta3} with $\mu_0 = \mu$ to get
$$\sum_{n = 0}^2 l_{a,n}(s) + r_{a,4}(s) \leq Ce^{-\frac s2}s^{\mu - 1},$$
and 
$$\forall |\beta| = 3, \quad |g_{a, \beta}(s)| \leq Ce^{-\frac s2}s^{\mu - \frac 12}.$$
Hence,
$$I_a(s) \leq Ce^{-\frac s2}s^{\mu - \frac 12}.$$
Estimate \eqref{est:Iad12} then follows by repeating this process a finite number of steps.

Now using \eqref{est:Iad12} and Lemma \ref{lemm:equbeta3} with $\mu_0 = d$,\\
- if $|\beta| = 3$ and $|\bar{\beta}| \geq 1$, we integrate \eqref{equ:beta3} on $[s, +\infty)$ to derive
$$\forall |\beta| = 3, \, |\bar{\beta}| \geq 1,\quad |g_{a, \beta}(s)| \leq Ce^{-\frac s2}s^{d - \frac 12},$$
- if $|\beta| = 3$ and $|\bar{\beta}| = 0$, by integrating \eqref{equ:beta3} on $[s_5, s]$, we deduce that  there exists continuous functions $a \to \lambda_\beta(a)$ such that 
$$\forall |\beta| = 3, \, |\bar{\beta}| = 0,\quad |g_{a, \beta}(s) - \lambda_\beta(a)e^{-\frac s2}| \leq Ce^{-\frac s2}s^{d - \frac 12}.$$
This concludes the proof of Lemma \ref{lemm:re_es12}.
\end{proof}

Now we shall use the geometric constraint on the asymptotic behavior of the solution
given in Proposition \ref{prop:geocon} to show that all the coefficients $\lambda_\beta(a)$ with $|\beta| = 3$ and $\bar{\beta} = 0$ in Lemma \ref{lemm:re_es12} have to be identically zero. In particular, we claim the following:
\begin{lemma} \label{lemm:exps12sd} There exists $s_6 > 0$ such that for all $s \geq s_6$,
$$\forall a \in S_\delta, \quad \|g_a(s)\|_{L^2_\rho} \leq Ce^{-\frac s2}s^{d - \frac 12} \quad \text{for some }\;\; d \in (0, \frac 12).$$
\end{lemma}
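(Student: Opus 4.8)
The plan is to show that every coefficient $\lambda_\beta(a)$ produced by Lemma \ref{lemm:re_es12} (those with $|\beta|=3$ and $|\bar\beta|=0$) vanishes identically for each $a\in S_\delta$; once this is established, Lemma \ref{lemm:re_es12} immediately yields $\|g_a(s)\|_{L^2_\rho}\le Ce^{-\frac s2}s^{d-\frac12}$, which is exactly the assertion, with the same $d\in(0,\frac12)$ and the same uniform constant $C$. Set $P_a(\tilde y)=\sum_{|\beta|=3,\,|\bar\beta|=0}\lambda_\beta(a)h_\beta(\tilde y)$, a polynomial in the tangential variables $\tilde y$ alone (since $|\bar\beta|=0$). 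First I would upgrade the $L^2_\rho$-estimate of Lemma \ref{lemm:re_es12} to a $C^1$-estimate on $\{|y|\le 2\}$ by parabolic regularity, as is done for \eqref{est:wa_WAexpoW2loc}, and then differentiate in a tangential direction $y_k$, $k\in\{\ell+1,\dots,N\}$. Because $w_{\mathcal B(a)}(\bar y,s)$ does not depend on $y_k$, we have $\partial_{y_k}w_a=\partial_{y_k}g_a$, so
\begin{equation*}
\partial_{y_k}w_a(y,s)=e^{-\frac s2}\,\partial_{y_k}P_a(\tilde y)+\mathcal O\!\left(e^{-\frac s2}s^{d-\frac12}\right),\qquad |y|\le 2 .
\end{equation*}

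Next I would invoke the geometric constraint of Proposition \ref{prop:geocon} along the slice $\bar y=0$, $|\tilde y|\le 1$. This is the decisive choice: with $\bar y=0$ the curvature term $\frac{\kappa}{2ps}\sum_i\partial_{\xi_k}\gamma_{a,i}(e^{-\frac s2}\tilde y)\,y_i$ vanishes (all $y_i=0$ for $i\le\ell$), and, using the $\mathcal C^{1,1-\eta}$-regularity of Proposition \ref{prop:C11eta} in the form $\sum_i|\partial_{\xi_k}\gamma_{a,i}(e^{-\frac s2}\tilde y)|\le Ce^{-\frac s2}s^{1+\mu}$, every remaining term on the right-hand side of \eqref{est:geoconst} carries an extra exponentially small factor ($e^{-\alpha^*s/2}$, $e^{-s/2}$, or $e^{-(1+\alpha^*)s/2}$) and is therefore $o(e^{-\frac s2})$. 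Writing $b=b(a,(0,\tilde y),s)$ as in \eqref{def:point_b} and applying the same expansion to $w_b$ evaluated at the origin, Proposition \ref{prop:geocon} gives, after dividing by $e^{-\frac s2}$,
\begin{equation*}
\partial_{y_k}P_a(\tilde y)-\partial_{y_k}P_b(0)=\mathcal O\!\left(s^{d-\frac12}\right)+o(1)\qquad\text{as }s\to+\infty .
\end{equation*}

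Then I would fix $\tilde y$ with $|\tilde y|\le1$ and let $s\to+\infty$. Since $|b-a|\le Ce^{-\frac s2}\to0$ by \eqref{def:point_b} and $a\mapsto\lambda_\beta(a)$ is continuous (Lemma \ref{lemm:re_es12}), we have $\partial_{y_k}P_b(0)\to\partial_{y_k}P_a(0)$, and since $d<\frac12$ the remainder tends to $0$. Hence $\partial_{y_k}P_a(\tilde y)=\partial_{y_k}P_a(0)$ for every $|\tilde y|\le1$, and by polynomiality for all $\tilde y$, for each $k\in\{\ell+1,\dots,N\}$. Thus the tangential gradient of $P_a$ is constant, so $P_a$ is an affine function of $\tilde y$. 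But each $h_\beta$ with $|\beta|=3$ has leading part $\tilde y^\beta$ and contains only monomials of degrees $3$ and $1$, so the degree-$3$ part of $P_a$ is exactly $\sum_\beta\lambda_\beta(a)\tilde y^\beta$; an affine $P_a$ forces this homogeneous cubic to vanish, whence $\lambda_\beta(a)=0$ for all $|\beta|=3,\ |\bar\beta|=0$ by linear independence of the monomials. Substituting back into Lemma \ref{lemm:re_es12} then concludes the proof.

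The main obstacle I anticipate is the error bookkeeping: one must verify that, precisely on the slice $\bar y=0$, the single genuinely dangerous contribution — the curvature term, which is only of size $e^{-\frac s2}s^{\mu}$ and would otherwise swamp the $e^{-\frac s2}$ scale being probed — drops out, leaving only exponentially subdominant remainders. This is exactly what permits the clean passage to the limit; the remaining ingredients (the parabolic upgrade, continuity of $\lambda_\beta$, and the affine/leading-cubic algebra) are then routine.
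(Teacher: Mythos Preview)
Your proposal is correct and follows essentially the same route as the paper: the parabolic upgrade of Lemma \ref{lemm:re_es12}, the evaluation on the slice $\bar y=0$ so that the curvature term in \eqref{est:geoconst} drops out, the use of Proposition \ref{prop:C11eta} to make every remaining error $o(e^{-s/2})$, and the passage to the limit via continuity of $a\mapsto\lambda_\beta(a)$ are exactly the paper's steps. The only cosmetic difference is the concluding algebra: the paper reads the identity $\partial_{y_k}P_a(\tilde y)=\partial_{y_k}P_a(0)$ through the Hermite expansion of $\partial_{y_k}P_a$ and invokes orthogonality to get $\beta_k\lambda_\beta(a)=0$ for every $k\ge\ell+1$, whereas you argue ``constant gradient $\Rightarrow$ $P_a$ affine $\Rightarrow$ cubic leading part $\sum_\beta\lambda_\beta(a)\tilde y^\beta$ vanishes''; both are trivially equivalent.
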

\begin{proof} Consider $a \in S_\delta$, we aim at proving that 
$$\forall \beta \in \mathbb{N}^N, \, |\beta| = 3, \, |\bar{\beta}| = 0, \quad \lambda_\beta(a) = 0,$$
where $\lambda_\beta(a)$ is introduced in Lemma \ref{lemm:re_es12} and $|\bar{\beta}| = \sum_{i = 1}^{\ell}\beta_i$.

From \eqref{def:wabyWa}, \eqref{def:gaWawB} and the fact that the estimate given in Lemma \ref{lemm:re_es12} also holds in $W^{2,\infty}(|y|<2)$ by parabolic regularity, we write for all $k \geq \ell + 1$ and $s \geq s_5 + 1$, 
\begin{equation}\label{est:lm1}
\sup_{a \in S_\delta, |y|< 2} \left|\frac{\partial w_a}{\partial y_k}(y,s) - e^{-\frac s2}\sum_{|\beta| = 3, \bar{\beta} = 0} \lambda_\beta(a)\frac{\partial h_\beta}{\partial y_k}(y)\right| \leq Ce^{-\frac s2}s^{d - \frac 12}.
\end{equation}
Take $y = (\bar{y}, \tilde{y})$, where $\bar{y} = (y_1, \cdots, y_\ell) = (0, \cdots, 0)$ and $\tilde{y} \in B_{N-\ell}(0, 1)$, then use Proposition \ref{prop:C11eta} and \eqref{est:geoconst}, we obtain
\begin{equation}\label{est:lm2}
\left|\frac{\partial w_a}{\partial y_k}(y,s) - \frac{\partial w_b}{\partial y_k}(0,s) \right| \leq Ce^{-(1 + \alpha^*)\frac s2}s^{C_0} + Ce^{-s}s^{C_0 + 1},
\end{equation}
for some $\alpha^* \in (0, \frac 12)$.

From \eqref{est:lm1} and \eqref{est:lm2}, we get
\begin{equation}\label{eq:tm22}
\left|\sum_{|\beta| = 3, |\bar{\beta}| = 0} \lambda_\beta(a)\frac{\partial h_\beta}{\partial y_k}(y) - \sum_{|\beta| = 3, |\bar{\beta}| = 0} \lambda_\beta(b)\frac{\partial h_\beta}{\partial y_k}(0)\right| \leq Cs^{d - \frac 12}.
\end{equation}
From \eqref{def:point_b} and Proposition \ref{prop:C11eta}, we see that $b \to a$ as $s \to +\infty$. Since $a \to \lambda_\beta(a)$ is continuous, $d \in (0,\frac 12)$, $h_{\beta_1}(0) = \cdots = h_{\beta_\ell}(0) = h_0(0) = 1$ from definition \eqref{def:hbeta}, and
$$\frac{\partial h_\beta}{\partial y_k}(y) = \beta_k h_{\beta_k - 1}(y_k) \prod_{j = 1, j \ne k}^N h_{\beta_j}(y_j),$$
we derive by passing to the limit in \eqref{eq:tm22},
\begin{align*}
&\sum_{|\beta| = 3, |\bar{\beta}| = 0} \lambda_\beta(a)\beta_kh_{\beta_k - 1}(y_k)\prod_{j = \ell + 1, j \ne k}^N h_{\beta_j}(y_j)\\
& \qquad= \sum_{|\beta| = 3, |\bar{\beta}| = 0} \lambda_\beta(a)\beta_kh_{\beta_k - 1}(0)\prod_{j = \ell + 1, j \ne k}^N h_{\beta_j}(0).
\end{align*}
By the orthogonality of the polynomials $h_i$, this yields
$$\beta_k \lambda_\beta(a) = 0, \quad \forall k \geq \ell+1,\; \forall |\beta|=3 \; \text{with}\; |\bar{\beta}| = 0.$$ 
Take $\beta$ arbitrary with $|\beta| = 3$ and $|\bar{\beta}| = 0$, then there exists $k \geq \ell + 1$ such that $\beta_k \geq 1$, which implies that $\lambda_\beta(a) = 0$. This ends the proof of Lemma \ref{lemm:exps12sd}.
\end{proof}

Let us now give the proof of Proposition \ref{prop:furRe} from Lemmas \ref{lemm:exps12sd} and \ref{lemm:equbeta3}. 
\begin{proof}[Proof of Proposition \ref{prop:furRe}] From Lemmas \ref{lemm:exps12sd} and \ref{lemm:equbeta3}, we see that for all $s \geq s_7 = \max\{s_4,s_5, s_6\}$, 
$$\sum_{n = 0}^2 l_{a,n}(s) + r_{a,4}(s) \leq Cs^{-\frac s2}s^{d - \frac 32},$$
and 
\begin{equation*}
\forall |\beta| = 3, \quad \left|\frac{d}{ds} \left(g_{a,\beta}(s)s^{\frac s2}s^{|\bar{\beta}|}\right)\right| \leq Ce^{|\bar{\beta}| + d - 2},
\end{equation*}
for some $d \in (0,\frac 12)$. Integrating this equation between $s$ and $+\infty$ if $|\bar{\beta}| = 0$ and between $s_7$ and $s$ if $|\bar{\beta}| \geq 1$, we get 
$$\forall |\beta| = 3,\quad |g_{a, \beta}(s)| \leq Ce^{-\frac s2}s^{d - 1}.$$
Hence, 
$$\forall s \geq s_7, \quad I_a(s) = \|g_a(s)\|_{L^2_\rho} \leq Ce^{-\frac s2}s^{d - 1}.$$
With this new estimate, we use again Lemma \ref{lemm:equbeta3} with $\mu_0 = d-1$ to show that there exists $s_8 > 0$ such that for all $s \geq s_8$, 
$$\sum_{n = 0}^2 l_{a,n}(s) + r_{a,4}(s) \leq Ce^{-\frac s2}s^{d - 2},$$
and 
\begin{equation*}
\forall |\beta| = 3, \quad \left|\frac{d}{ds} \left(g_{a,\beta}(s)e^{\frac s2}s^{|\bar{\beta}|}\right)\right| \leq Cs^{|\bar{\beta}| + d - \frac 52}.
\end{equation*}
This new equation implies that for all $|\beta| = 3$ and $s \geq s_8$, \\
- if $|\bar{\beta}| = 0$ or $|\bar{\beta}| \geq 2$, we have $|g_{a, \beta}(s)| \leq Ce^{-\frac s2}s^{d - \frac 32}$,\\
- if $|\bar{\beta}| = 1$, we obtain the existence of continuous functions $a \to \lambda_\beta(a)$ such that 
\begin{equation}\label{equ:gabeta3Th2}
\left|g_{a,\beta}(s) - \frac{e^{-\frac s2}}{s}\lambda_\beta(a)\right| \leq Ce^{-\frac s2}s^{d - \frac 32}.
\end{equation}
This concludes the proof of Proposition \ref{prop:furRe}.
\end{proof}


\bibliographystyle{alpha}

\begin{thebibliography}{FMZ00}

\bibitem[BK94]{BKnon94}
J.~Bricmont and A.~Kupiainen.
\newblock Universality in blow-up for nonlinear heat equations.
\newblock {\em Nonlinearity}, 7(2):539--575, 1994.

\bibitem[EZ11]{EZsema11}
M.~A. Ebde and H.~Zaag.
\newblock Construction and stability of a blow up solution for a nonlinear heat
  equation with a gradient term.
\newblock {\em S$\vec{\rm e}$MA J.}, (55):5--21, 2011.

\bibitem[FK92]{FKcpam92}
S.~Filippas and R.~V. Kohn.
\newblock Refined asymptotics for the blowup of {$u_t-\Delta u=u^p$}.
\newblock {\em Comm. Pure Appl. Math.}, 45(7):821--869, 1992.

\bibitem[FL93]{FLaihn93}
S.~Filippas and W.~X. Liu.
\newblock On the blowup of multidimensional semilinear heat equations.
\newblock {\em Ann. Inst. H. Poincar{\'e} Anal. Non Lin{\'e}aire},
  10(3):313--344, 1993.

\bibitem[FMZ00]{FMZma00}
C.~{Fermanian Kammerer}, F.~Merle, and H.~Zaag.
\newblock Stability of the blow-up profile of non-linear heat equations from
  the dynamical system point of view.
\newblock {\em Math. Ann.}, 317(2):347--387, 2000.

\bibitem[FZ00]{FZnon00}
C.~{Fermanian Kammerer} and H.~Zaag.
\newblock Boundedness up to blow-up of the difference between two solutions to
  a semilinear heat equation.
\newblock {\em Nonlinearity}, 13(4):1189--1216, 2000.

\bibitem[HV92a]{HVcrasp92}
M.~A. Herrero and J.~J.~L. Vel{\'a}zquez.
\newblock Comportement g\'en\'erique au voisinage d'un point d'explosion pour
  des solutions d'\'equations paraboliques unidimensionnelles.
\newblock {\em C. R. Acad. Sci. Paris S\'er. I Math.}, 314(3):201--203, 1992.

\bibitem[HV92b]{HVasnsp92}
M.~A. Herrero and J.~J.~L. Vel{\'a}zquez.
\newblock Generic behaviour of one-dimensional blow up patterns.
\newblock {\em Ann. Scuola Norm. Sup. Pisa Cl. Sci. (4)}, 19(3):381--450, 1992.

\bibitem[HV93]{HVaihn93}
M.~A. Herrero and J.~J.~L. Vel{\'a}zquez.
\newblock Blow-up behaviour of one-dimensional semilinear parabolic equations.
\newblock {\em Ann. Inst. H. Poincar{\'e} Anal. Non Lin{\'e}aire},
  10(2):131--189, 1993.

\bibitem[MZ97]{MZdm97}
F.~Merle and H.~Zaag.
\newblock Stability of the blow-up profile for equations of the type
  {$u_t=\Delta u+\vert u\vert ^{p-1}u$}.
\newblock {\em Duke Math. J.}, 86(1):143--195, 1997.

\bibitem[MZ98]{MZgfa98}
F.~Merle and H.~Zaag.
\newblock Refined uniform estimates at blow-up and applications for nonlinear
  heat equations.
\newblock {\em Geom. Funct. Anal.}, 8(6):1043--1085, 1998.

\bibitem[MZ00]{MZma00}
F.~Merle and H.~Zaag.
\newblock A {L}iouville theorem for vector-valued nonlinear heat equations and
  applications.
\newblock {\em Math. Ann.}, 316(1):103--137, 2000.

\bibitem[NZ16a]{NZsns16}
V.~T. Nguyen and H.~Zaag.
\newblock Construction of a stable blow-up solution for a class of strongly
  perturbed semilinear heat equations.
\newblock {\em Ann. Scuola Norm. Sup. Pisa Cl. Sci., to appear}, 2016.

\bibitem[NZ16b]{NZens16}
V.~T. Nguyen and H.~Zaag.
\newblock Finite degrees of freedom for the refined blow-up profile for a
  semilinear heat equation.
\newblock {\em Ann. Scient. {\'E}c. Norm. Sup. to appear}, 2016.

\bibitem[Vel92]{VELcpde92}
J.~J.~L. Vel{\'a}zquez.
\newblock Higher-dimensional blow up for semilinear parabolic equations.
\newblock {\em Comm. Partial Differential Equations}, 17(9-10):1567--1596,
  1992.

\bibitem[Vel93]{VELiumj93}
J.~J.~L. Vel{\'a}zquez.
\newblock Estimates on the {$(n-1)$}-dimensional {H}ausdorff measure of the
  blow-up set for a semilinear heat equation.
\newblock {\em Indiana Univ. Math. J.}, 42(2):445--476, 1993.

\bibitem[Zaa02a]{ZAAaihp02}
H.~Zaag.
\newblock On the regularity of the blow-up set for semilinear heat equations.
\newblock {\em Ann. Inst. H. Poincar\'e Anal. Non Lin\'eaire}, 19(5):505--542,
  2002.

\bibitem[Zaa02b]{ZAAcmp02}
H.~Zaag.
\newblock One-dimensional behavior of singular {$N$}-dimensional solutions of
  semilinear heat equations.
\newblock {\em Comm. Math. Phys.}, 225(3):523--549, 2002.

\bibitem[Zaa02c]{ZAAmme02}
H.~Zaag.
\newblock Regularity of the blow-up set and singular behavior for semilinear
  heat equations.
\newblock In {\em Mathematics \& mathematics education ({B}ethlehem, 2000)},
  pages 337--347. World Sci. Publ., River Edge, NJ, 2002.

\bibitem[Zaa06]{ZAAdm06}
H.~Zaag.
\newblock Determination of the curvature of the blow-up set and refined
  singular behavior for a semilinear heat equation.
\newblock {\em Duke Math. J.}, 133(3):499--525, 2006.

\end{thebibliography}

\def\cprime{$'$}

\bigskip
\end{document}